\newtheorem{remark}{Remark}[section]
\newtheorem{theorem}{Theorem}[section]
\newtheorem{lemma}{Lemma}[section]
\newtheorem{proposition}{Proposition}[section]
\newtheorem{corollary}{Corollary}[section]
\newtheorem{definition}{Definition}[section]
\newenvironment{proof}
               {\begin{sloppypar} \noindent{\bf Proof}}
               {\hspace*{\fill} $\square$ \end{sloppypar}}
\newcommand{\beq}[1]{\begin{equation} \label{#1}}
\newcommand{\eeq}{\end{equation}}
\newcommand\myfigure[1]{%
\medskip\noindent\begin{minipage}{\columnwidth}
\centering%
#1%
\end{minipage}\medskip}
\def\@biblabel#1{#1.\hfill}
\title{\bfseries\textrm{Critical Connectivity and Fastest Convergence Rates of Distributed Consensus with Switching Topologies and Additive Noises*} \footnotetext{*The research of G. Chen and C. Chen was supported by the National Key Basic Research Program of China (973 program) under grant 2014CB845301/2/3, and by the National Natural Science Foundation of China under grants 91427304 and 61673373.
The research of L.Y. Wang and G. Yin was supported in part by U.S.
Army Research Office under grant W911NF-15-1-0218.}}
\date{}
\author{Ge Chen,\thanks{National Center for Mathematics and Interdisciplinary Sciences \& Key Laboratory of Systems and
Control, Academy of Mathematics and Systems Science, Chinese Academy of Sciences, Beijing 100190,
China, {\tt
chenge@amss.ac.cn}} \and Le Yi Wang,\thanks{Department of Electrical
and Computer Engineering,
Wayne State University, Detroit, MI 48202, USA, {\tt
lywang@ece.eng.wayne.edu}}
\and Chen Chen,\thanks{is with the Shannon Lab, 2012 Laboratory, Huawei Technologies Co. Ltd., {\tt
chenchen9@huawei.com}} \and
George Yin\thanks{Department of
Mathematics, Wayne State University, Detroit, MI 48202, USA, {\tt
gyin@math.wayne.edu}}
}
\begin{document}
\begin{multicols}{2}
[
\maketitle
]

\textbf{Abstract-} Consensus conditions and convergence speeds are crucial for
distributed consensus algorithms of networked systems.
 Based on a basic first-order average-consensus protocol with time-varying topologies and additive noises,
 this paper first investigates its critical consensus condition on network topology by stochastic approximation frameworks. A new  joint-connectivity condition called extensible joint-connectivity that contains
 a parameter $\delta$ (termed the extensible exponent) is proposed. With this and a balanced topology condition,  we show that a critical value
 of $\delta$ for consensus is $1/2$.
Optimization on convergence rate of this protocol is further investigated.
It is proved that the fastest convergence rate, which is the theoretic optimal
rate among all controls,
 is of the order $1/t$ for the best topologies, and
 is of the order $1/t^{1-2\delta}$ for the worst topologies which are balanced and satisfy the extensible joint-connectivity condition.
For practical implementation, certain open-loop control strategies are introduced
to achieve consensus with a convergence rate of
the same order as the fastest convergence rate.
Furthermore, a consensus condition is derived for
non stationary and strongly correlated random topologies.
The algorithms and consensus conditions are applied to distributed consensus computation of mobile ad-hoc networks; and their related
 critical exponents are derived from relative velocities of mobile agents for guaranteeing consensus.

{\small {\bfseries Keywords:} Average-consensus, stochastic approximation,  jointly-connected topology, multi-agent system, networked system}

\section{Introduction}\label{intro}

Consensus of multi-agent systems has drawn
considerable attention from
various fields
over the past two  decades.
For example, physicists investigate the synchronization phenomena of coupled oscillators, flashing fireflies, and chirping crickets \cite{Kuramoto1975,Acebron2004};
  biologists, physicists, and computer scientists try to understand and model the flocking phenomenon of animals' behavior \cite{Buhl2006,Vicsek1995,Reynolds1987}; sociologists simulate the emergence and spread of public opinions \cite{Deffuant2000,Hegselmann2002}.
  Because of the importance, effort has been devoted to
the mathematical analysis of consensus of flocks
  \cite{Jad2003,Smale2007,Liu2009,Chen2014}.
 Meanwhile
 consensus control algorithms have been developed for a wide range of applications, such as formation control of robots and
 vehicles \cite{Savkin2004,Fax2004,Ren2008}, attitude synchronization of rigid bodies and multiple spacecrafts \cite{Bai2008,Abdessameud2009,Sarlette2009}, and distributed computation, filtering and resource allocations of  networked systems \cite{Lynch1996,Saber2005,Yin2011}.
 The common thread in the consensus research is a group of agents with interconnecting
neighbor graphs trying to achieve a global coordination or collective behavior by using neighborhood information permitted
by the network topologies.

Although there are many
interesting consensus protocols like second-order and fractional-order consensus protocols \cite{Lu2016},
the first-order average-consensus protocol is the most basic one. This protocol usually assumes that the network topologies cannot be directly controlled and each node only knows its own and neighbors' information, and has been investigated
using different approaches
to accommodate
different kinds of uncertainties. For example, in wireless communication networks, channel reliability
 is affected by thermal noise, channel fading, and signal quantization; in formation of multiple satellites, vehicles or robots, there exist measurement noises
in observations of neighbors' states. To model random failures of communication links, some papers use deterministic network
topologies but allow their switching
\cite{Saber2004,Xiao2004,Olshevsky2011}; and others adopt stochastic settings in which network topologies evolve according to some random distributions
 \cite{Hatano2005,Porfiri2007,Akar2008,Zhou2009,Tahbaz2010,Touri2011}. We remark that most models in the existing research effort do not contain observation noise, and as such they do not cover scenarios of
 measurement noise  or quantization error. To overcome this deficiency,
 some papers consider the first-order average-consensus protocols with additive noises \cite{Touri2009,Patterson2010,Aysal2010,Young2010,Stankovic2011,Huang2009,Huang2010,Huang2012,Li2009,Li2010,Yin2011,Yin2013,Wang2013},
 among which a common feature
 is in using stochastic approximation methodologies.

 The main idea of
 distributed stochastic approximation is: Each agent in the network uses a decreasing gain function acting on the information received from its neighbors to reduce the impact of communication or measurement noises. Using this idea,
Huang and Manton  \cite{Huang2009} and Li and Zhang \cite{Li2009} considered first-order discrete-time and continuum-time consensus models with fixed topology and additive noise, respectively, and showed that the algorithms could achieve consensus in a probability sense if the topologies were balanced and connected.
Later, this consensus condition was
relaxed from fixed balanced topologies to switched balanced topologies that  satisfied a \emph{uniform joint-connectivity} condition  \cite{Touri2009,Li2010}, namely, the union of the topology graphs over a given bounded time interval was connected uniformly in time, and further relaxed to general directed topologies, which may not be balanced,  with uniform joint-connectivity  \cite{Huang2012}. Huang  \cite{Huang2010} also applied stochastic approximation methods to consensus problems for networks  over lossy wireless networks containing random link gains, additive noises and Markovian lossy signal receptions.
On the other hand, motivated by resource allocation problems in computing, communications, inventory, space, and power generations,
Yin, Sun and Wang  \cite{Yin2011} introduced a stochastic approximation algorithm for constrained consensus problems of networked systems, where
consensus conditions were established by assuming that the topologies were randomly switched under a Markov chain framework. This algorithm was further
investigated and expanded later \cite{Yin2013,Wang2013}.

Despite the existing research work on first-order average-consensus protocols, some key problems remain unsolved.
\cite{Touri2011} showed
that first-order average-consensus protocols with deterministic topologies and no additive noises could achieve consensus if and only if the time-varying topologies satisfied an \emph{infinite joint-connectivity} condition, i.e., the union of the topologies from any finite time to infinite was connected, providing all topologies had the same stationary distribution. However, under the same protocol but with additive noises the current best condition on topologies for consensus is the uniform joint-connectivity \cite{Touri2009,Li2010,Huang2012}. Thus, there exists a
critical gap between the consensus conditions on topologies with and without additive noises. Naturally, for protocols with additive noises, an
open question
is: What is the critical connectivity condition on topologies for consensus?

To address this problem  we propose a new condition for topologies named \emph{extensible joint-connectivity} in this paper.
This condition allows the length of the interval during which the union of the network topologies is connected to increase as the time grows with a
 growing rate,  called \emph{extensible exponent} and denoted by $\delta$. This is an intermediate condition between the uniform joint-connectivity and infinite joint-connectivity.
Furthermore, we use a
stochastic approximation approach to attenuate  noise effect and treat the gain function as the
control input.
Under the extensible joint-connectivity and balanced topology condition it will be shown that if $\delta\leq 1/2$, for all topology sequences there exist open-loop controls of the gain function such that the system reaches consensus; if $\delta>1/2$, there exist some topology sequences such that no open-loop control of the gain function can make the system to reach consensus.

One of the most basic and important tasks on multi-agent systems  is to optimize their performance. However,
the current theoretical research on this topic is still at an early
stage \cite{Imer2006,Yuksel2007}.
For example, the convergence speed is an important performance of average-consensus protocols which has been considered under both fixed and switching topologies \cite{Olshevsky2011,Touri2011,Patterson2010,Yin2011}. However  its
 optimization currently focuses on the case of fixed topology by iterate averaging of stochastic approximation \cite{Yin2013} or optimization algorithms of static graphs \cite{Xiao2004,Kim2006}.
These methods cannot
be used to optimize average-consensus protocols under uncontrollable time-varying network topologies, leaving an open problem on how to optimize the convergence rate of average-consensus protocols \cite{Cao2012}.

This paper investigates this problem for the first time, still based on the stochastic approximation methodology. Since the convergence rate depends on the uncontrollable time-varying topologies whose global information is unavailable,
its optimization is difficult  and the traditional optimization theory for stochastic approximation cannot cover this scenario.
This paper is concentrated on
the \emph{fastest convergence rate}, which is the theoretical optimal convergence rate  among all gain functions, with respect to the best and worst topology sequences, respectively.  It will be shown that the fastest convergence rate is of the order $1/t$, in the sense of
$\Theta(1/t)$) for the best topologies, and $\Theta(1/t^{1-2\delta})$ for the worst topologies which are balanced and satisfy the extensible joint-connectivity condition. These results indicate that for any balanced and uniformly jointly connected topologies the fastest convergence rate is $\Theta(1/t)$.
For implementation on practical systems, this paper presents
some open-loop controls for this protocol to reach consensus with a convergence rate of
the same order as the fastest convergence rate.

Finally,
in many practical systems, especially those involving wireless communications,
network topologies of distributed consensus protocols are random networks.
This kind of algorithms has been investigated in many papers. However almost all of them assume that the topologies are either an i.i.d. sequence or a stationary Markov process \cite{Hatano2005,Porfiri2007,Akar2008,Zhou2009,Tahbaz2010,Huang2010,Yin2011}. These assumptions may not fit some practical situations such as
 mobile wireless sensor networks or multi robot systems whose topologies depend on the distances among agents and consequently may be non stationary.
Thus, the last question studied in this paper is: for the average-consensus protocol with random topology and additive noise, can we relax the topology condition for consensus to non stationary and strongly correlated sequences?

This paper gives an answer to this question
by proposing a consensus condition in which
 the random topology sequence can be strongly correlated and its connectivity probability can be a negative power function.
 To illustrate relevance of this condition,
this result is applied to distributed consensus computation of a mobile ad-hoc network. In this application it is shown that
 to guarantee consensus the distance between mobile agents cannot grow too fast. To be specific, if the velocity difference between agents is of the order $\frac{1}{t}$, with average-consensus protocols the mobile ad-hoc network can reach a consensus state; on the other hand, from simulations it is 
 demonstrated that if the velocity difference is bigger than $\frac{c}{t^b}$, where  $c>0$ and $b\in[0,1)$ are two constants, then the mobile ad-hoc network cannot reach a consensus state.

The rest of the paper is organized as follows: Section \ref{prepa} introduces our consensus protocol and some basic definitions. Section \ref{Consensus_condition} introduces the critical connectivity condition of network topologies for consensus.  Section
\ref{Convergence_speed} investigates the fastest convergence rates with respect to the best and worst topologies, respectively.
In Section \ref{sec_stoc} we present a consensus condition for random network topologies and an application to mobile ad-hoc networks.
 Finally, we conclude this paper with discussions on the main findings of this paper.

\section{Preliminaries}\label{prepa}
\subsection{Definitions in Graph Theory}
Let $\mathcal{G}=\{ \mathcal{V }, \mathcal{E},\mathcal{A}\}$ be a weighted digraph, where $\mathcal{V}=\{1,2,\ldots n\}$ is the set of nodes with node
$i$ representing the $i$th agent, $\mathcal{E}$ is the set of edges, and $\mathcal{A}\in \mathbb{R}^{n\times n}$ is the
weight matrix. An edge in $\mathcal{G}$ is
an
ordered pair $(j,i)$, and $(j,i)\in \mathcal{E}$ if and only if the $i$th agent can
receive information from the $j$th agent directly. The neighborhood of
the $i$th agent is denoted by $\mathcal{N}_i=\{j\in \mathcal{V }| (j,i)\in \mathcal{E}\}$. An
element of $\mathcal{N}_i$ is called a neighbor of $i$. $\mathcal{G}$ is called  an undirected graph when all of its edges are bidirectional,
which means $j\in\mathcal{N}_i$ if and only if $i\in\mathcal{N}_j$.
Let $a_{ij}>0$ denote the weight of the edge $(j,i)$. The weight matrix  $\mathcal{A}$ is defined by $\mathcal{A}_{ij}=a_{ij}$
for $(j,i)\in\mathcal{E}$, and $\mathcal{A}_{ij}=0$ otherwise.

 For graph $\mathcal{G}$,
the in-degree of $i$ is defined as $\deg_{\rm{in}}^i=\sum_{j\in \mathcal{N}_i} a_{ij}$ and
the out-degree of $i$ is defined as $\deg_{\rm{out}}^i=\sum_{i\in\mathcal{N}_j}a_{ji}$.
If $\deg_{\rm{in}}^i=\deg_{\rm{out}}^i$ for all $1\leq i\leq n$, we call $\mathcal{G}$  a \emph{balanced graph}.
 The
Laplacian matrix of $\mathcal{G}$ is defined as $L_{\mathcal{G}}=D_{\mathcal{G}}- \mathcal{A}_{\mathcal{G}}$,
where
$D_{\mathcal{G}}=\mbox{diag}(\deg_{in}^1, \ldots,\deg_{in}^n)$, and $[\mathcal{A}_{\mathcal{G}}]_{ij}$ equals to $a_{ij}$ if $j\in \mathcal{N}_i$ and $0$ otherwise.

A sequence $(i_1, i_2), (i_2, i_3), \ldots, (i_{k-1}, i_k)$  of edges is called
a directed path from node $i_1$ to node $i_k$.
 $\mathcal{G}$ is called a strongly
connected digraph, if for any $i,j \in \mathcal{V}$, there is a directed path
from $i$ to $j$. A strongly connected undirected graph is also called
a connected graph. For graphs $\mathcal{G}(t)=\{V,\mathcal{E}(t),\mathcal{A}(t)\}$, $i\leq t <j$, their union is defined by $\cup_{i\leq t<j}\mathcal{G}(t):=\{V,\cup_{i\leq t<j}(\mathcal{E}(t),\mathcal{A}(t))\}$.
Note that there may exist multiple weighted edges from one vertex to another in $\cup_{i\leq t<j}\mathcal{G}(t)$.

\subsection {Consensus Protocol}
This paper considers a discrete-time first-order system containing $n$ agents, where 
agent $i$'s state $x_i(t)$ is updated by
 \begin{eqnarray}\label{model0}
x_i(t+1)=x_i(t)+u_i(t),~~t=1,2,\ldots
\end{eqnarray}
 Here  $u_i(t)$ is the control input of the $i$th agent. For simplicity, we suppose $x_i(t)$ and $u_i(t)$ are scalars. As mentioned above, this paper will investigate a basic stochastic approximation algorithm for average-consensus, that is, the control $u_i(t)$ is chosen by
\begin{equation}\label{model1}
u_i(t)=a(t)\sum_{j\in\mathcal{N}_i(t)} a_{ij}^t \left[x_j(t)-x_i(t)+w_{ji}(t)\right],
\end{equation}
where $a(t)\geq 0$ is the common gain control at time $t$, $\mathcal{N}_i(t)$ is the neighbors of node $i$ at time $t$, $a_{ij}^t$ is the weight of the edge
$(j,i)$ at time $t$, and $w_{ji}(t)$ is the noise of agent $i$ receiving information from agent $j$ at time $t$. Throughout this paper, we assume
\begin{eqnarray}\label{a_assum}
1\leq a_{ij}^t \leq a_{\max}, ~~t\geq 1, 1\leq i\leq n, j\in\mathcal{N}_i(t).
\end{eqnarray}
This kind of protocols was investigated in several papers \cite{Touri2009,Huang2009,Huang2012,Li2009,Li2010}
with
applications to distributed computation of wireless sensor networks and formation control of multiple satellites, vehicles, or robots.

We define the $\sigma$-algebra generated by the noises
$w_{ji}(k),1\leq k\leq t,1\leq i\leq n, j\in\mathcal{N}_i(k)$ by $$\mathcal{F}_t=\sigma(w_{ji}(k),1\leq k\leq t,1\leq i\leq n, j\in\mathcal{N}_i(k)).$$ The probability space of system (\ref{model0})-(\ref{model1}) is
$(\Omega,\mathcal{F}_{\infty},P)$.
$\mathcal{G}(t)=(\mathcal{V},\mathcal{E}(t),\mathcal{A}(t))$ represents the topology of  system  (\ref{model0})-(\ref{model1}) at time $t$, where
$\mathcal{E}(t)=\{(j,i)|j\in\mathcal{N}_i(t)\}$ is the edge set of $\mathcal{G}(t)$, and $\mathcal{A}(t)$ is the weight matrix satisfying
 $\mathcal{A}_{ij}(t)=a_{ij}^t$ if $(j,i)\in\mathcal{E}(t)$ and $\mathcal{A}_{ij}(t)=0$ otherwise. The corresponding topology sequence is $\{\mathcal{G}(t)\}_{t\geq 1}=\{(\mathcal{V},\mathcal{E}(t),\mathcal{A}(t))\}_{t\geq 1}$.
For simplicity, we use $L(t)=L_{\mathcal{G}(t)}$ for the Laplacian matrix of $\mathcal{G}(t)$.
Let $x(t)=(x_1(t),x_2(t),\ldots,x_n(t))'$ where $z'$ denotes the transpose of $z$.
The protocol (\ref{model0})-(\ref{model1}) can be rewritten as the following matrix form:
\begin{eqnarray}\label{model3}
x(t+1)=[I-a(t)L(t)]x(t)+a(t)\widehat{w}(t),~t\geq 1,
\end{eqnarray}
where $\widehat{w}(t)\in\mathbb{R}^n$ whose $i$-th element is $\sum_{j\in\mathcal{N}_i(t)} a_{ij}^t w_{ji}(t)$.

In this paper, we assume that there
is no
 central controller who knows the global information of the evolution of  the system, and that
the so-called consensus control
requires to design off-line gains $a(t)$
 such that all agents achieve an agreement on their states in mean square sense, when $t\rightarrow \infty $. The mean square consensus is defined as follows.

 \begin{definition}\label{consensus_def}\cite{Li2010}
We say the system (\ref{model0})-(\ref{model1})
reaches \emph{mean square consensus} if $($i$)$ there exists
 a random variable $x^*$ satisfying $|E(x^*)|<\infty$ and Var$(x^*)<\infty$ such that
$\lim_{t\rightarrow \infty}E\|x(t)-x^*\mathds{1}\|^2=0,$
  where  $\mathds{1}\in \mathbb{R}^n$ is the column vector of all 1s,
  and
  $($ii$)$ reaches \emph{unbiased mean square average-consensus} if
  in addition, $x^*$ satisfies $E(x^*)=\frac{1}{n}\sum_{i=1}^n x_i(1)$.
 \end{definition}

 \subsection{Standard Notation}
 The following standard mathematical notation will be used in this paper.  Given a random variable $X$, let $E[X]$ and Var$(X)$
 be its expectation and variance, respectively.
For a vector $Y$, let $Y_i$ denote its $i$th entry.  For a real number $x$,
$\lfloor x \rfloor$ is the maximum integer less than or equal to $x$, and $\lceil x \rceil$ is the smallest integer larger than or equal to $x$. Let $\|\cdot\|$ denotes the $l_2-$norm (Euclidean norm).
Given two sequences of
positive numbers $g_1(t)$, $g_2(t)$,
 \begin{itemize}
\item $g_1(t)=O(g_2(t))$ if there exists a constant $c>0$ and a
value $t_0>0$ such that $g_1(t)\leq c g_2(t)$ for all $t\geq t_0$.
\item $g_1(t)=\Theta(g_2(t))$ if there exist two constants $c_2>c_1>0$ and a
value $t_0>0$ such that $c_1 g_2(t) \leq g_1(t)\leq c_2 g_2(t)$ for all $t\geq t_0$.
\item $g_1(t)=o(g_2(t))$ if
$\lim_{t\rightarrow\infty} g_1(t)/g_2(t)=0$.
\end{itemize}

\section{Critical Connectivity for Consensus}\label{Consensus_condition}
This section  provides some consensus conditions for  system (\ref{model0})-(\ref{model1}). In Subsection \ref{EJC}
we will propose a new condition concerning the connectivity, while the sufficient and necessary conditions of consensus are given
in Subsections \ref{suff_sec} and \ref{nece_sec}, respectively.

\subsection{Extensible Joint-connectivity}\label{EJC}


The uniform joint-connectivity  of the topologies is a widely used condition in the consensus research of multi-agent systems. However, this condition is not robust for some situations. For example, if the links in a networked system have a positive probability of failure, it can be computed that with probability $1$ the uniform joint-connectivity condition is not satisfied. Also,
this condition cannot be satisfied in some flocking models \cite{Chen2015}. To accommodate practical uncertainties in networked systems,
we propose a new condition for topologies,  called \emph{extensible joint-connectivity}, as follows:

{\bf (A1)} There exist constants $\delta \geq 0$ and $c\geq 1
$, and an infinite
sequence $1=t_1<t_2<t_3<\cdots$ such that $t_k\leq t_{k-1}+c t_{k-1}^{\delta}$ and $\cup_{t_{k-1}\leq t<t_k}\mathcal{G}(t)$ is strongly connected for all $k>1$.

\vskip 1mm

In (A1), we call $\delta$
the \emph{extensible exponent} for the joint-connectivity of the topologies.
For any $\delta$, (A1) is stronger than the infinite joint-connectivity assumption, which can be formulated by $\bigcup_{t\geq k} \mathcal{G}(k)$ being strongly connected for all $k\geq 1$. On the other hand, for any positive $\delta$, (A1) is weaker than the uniform joint-connectivity assumption. In fact, the uniform
joint-connectivity is a special case of (A1) with $\delta=0$.

\begin{remark}\label{rem_adv}
Compared to the uniform joint-connectivity, one advantage of the extensible joint-connectivity is that it can be used to analyze systems with random topology, even if the  probability of connectivity  of the topology
is not stationary and
decays in a negative power rate. In this case (by (\ref{rand_topo_3_1}) in Appendix \ref{App_rand_topo}) with probability $1$ there exists a finite time $T>0$ such that (A1) is satisfied for all $t\geq T$.
 This property has been applied to distributed consensus computation of mobile ad-hoc networks in Subsection \ref{subsec_app}, where
 the probability of successful communications between two agents depends on their distance.
\end{remark}

\subsection{Sufficient Conditions for Consensus}\label{suff_sec}

We first give a key lemma deduced from \cite{Touri2011}.
Before the statement of this key lemma some definitions are needed.
For protocol (\ref{model3}), define
$$\Phi(t,i):=\left[I-a(t) L(t)\right]\cdots \left[I-a(i) L(i)\right].$$
Take $\prod_{l=i}^{t}(\cdot):=I$ when $t<i$.
For any $x\in\mathbb{R}^n$,
let $x_{\rm{ave}}:=\frac{1}{n}\sum_{i=1}^n x_i$ be the average value of $x$,
and define
\begin{eqnarray*}
V(x):=\|x-x_{\rm{ave}}\mathds{1}\|^2=\sum_{i=1}^n (x_i-x_{\rm{ave}})^2.
\end{eqnarray*}
 For an integer sequence $\{t_k\}_{k\geq 1}$,
denote
\begin{eqnarray*}
k^i:=\min\{k: t_{k}\geq i+1\} \mbox{~and~} \widetilde{k}^t:=\max\{k: t_k-1\leq t\}.
\end{eqnarray*}
 Set $$d_{\max}:=\sup_{t,i}\sum_{j\in\mathcal{N}_i(t)}a_{ij}^t \leq (n-1)a_{\max}.$$
Also, following the common practice \cite{Saber2004,Zhou2009,Huang2010,Li2009,Li2010,Touri2009} we focus on balanced topologies on system (\ref{model0})-(\ref{model1}).

{\bf (A2)} The topology $\mathcal{G}(t)$ is balanced for all $t\geq 1$.

According to the definition of balancedness,  if $\{\mathcal{G}(t)\}$ is  undirected and the weight matrix $\mathcal{A}(t)$ is symmetric for all $t\geq 1$,
then $\{\mathcal{G}(t)\}$ are all balanced.

Under (A1) and (A2) we have the following lemma, whose proof is postponed to Appendix \ref{App_lemmas}.
\begin{lemma}\label{Lemma_3} Suppose that (A1) and (A2) are satisfied.
Let $z(t)=\Phi(t,i+1)z(i)$ for $t>i$.  If $a(t)\in (0,1/d_{\max})$  then
\begin{eqnarray*}
V\left(z(t)\right)\leq V\left(z(i)\right) \prod_{l=k^i}^{\widetilde{k}^t-1}\left(1-\frac{\delta_l(1-\delta_l)^2 \varepsilon_l }{n(n-1)^2} \right),
\end{eqnarray*}
where $\delta_l=\min_{t_{l}\leq t <t_{l+1} }a(t)$ and $\varepsilon_l=\min_{t_{l}\leq t <t_{l+1} }(1-a(t) d_{\max})$.
\end{lemma}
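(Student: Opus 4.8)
The plan is to exploit the balancedness assumption (A2) to reduce the claim to a variance-contraction estimate for products of doubly stochastic matrices, and then to convert the strong connectivity guaranteed by (A1) into a quantitative per-interval contraction factor. First I would note that for a balanced graph the Laplacian satisfies $L(t)\mathds{1}=0$ and $\mathds{1}'L(t)=0$, so each factor $M(t):=I-a(t)L(t)$ has unit row sums and unit column sums; when $a(t)\in(0,1/d_{\max})$ its diagonal entries are at least $1-a(t)d_{\max}>0$ while all off-diagonal entries are nonnegative, hence $M(t)$ is doubly stochastic. Consequently $\mathds{1}'z(t)$ is invariant along $z(t)=\Phi(t,i+1)z(i)$, so $z_{\rm ave}$ is constant and, writing $y:=z-z_{\rm ave}\mathds{1}$, the quantity $V(z(t))=\|y(t)\|^2$ is measured against a fixed center.

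Next I would establish the per-step identity $V(M(t)z)=V(z)-\sum_{j<k}(M(t)'M(t))_{jk}(z_j-z_k)^2$, valid for any doubly stochastic $M(t)$: the matrix $M(t)'M(t)$ is again symmetric doubly stochastic, and expanding $\sum_{j,k}(M(t)'M(t))_{jk}(z_j-z_k)^2$ converts the quadratic form $z'M(t)'M(t)z$ into this difference. Since $(M(t)'M(t))_{jk}=\sum_i M(t)_{ij}M(t)_{ik}\ge 0$, this shows $V$ is non-increasing at every step and pins down the exact decrement as a sum over node pairs. Summing over one connectivity block $[t_l,t_{l+1})$ expresses the total decrease as an accumulation of such pair terms, and it remains to bound this accumulation below by $\gamma_l V(z(t_l))$ with $\gamma_l=\delta_l(1-\delta_l)^2\varepsilon_l/(n(n-1)^2)$.

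The heart of the argument, and the step I expect to be the main obstacle, is this single-interval contraction. Here I would pick a node $p$ with $y_p(t_l)^2\ge V(z(t_l))/n$, which exists because $\sum_i y_i=0$ forces $\max_i|y_i|\ge\sqrt{V(z(t_l))/n}$ and likewise supplies a node on the opposite side of the average. Since $\cup_{t_l\le t<t_{l+1}}\mathcal{G}(t)$ is strongly connected, there is a directed path of at most $n-1$ edges joining $p$ to that opposite node, whose edges become active at various instants of the block. A pigeonhole estimate along this path isolates a single active edge $(k,j)$, at some time $\tau$, across which $(y_j(\tau)-y_k(\tau))^2\ge (1-\delta_l)^2 V(z(t_l))/(n(n-1)^2)$: the factor $1/(n-1)^2$ comes from splitting the total discrepancy over the $\le n-1$ path edges, while $(1-\delta_l)^2$ controls the drift of the endpoint values between $t_l$ and $\tau$ through the self-retention weights $M(t)_{ii}\ge\varepsilon_l$. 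For that edge the bounds $a^{\tau}_{kj}\ge 1$ and $a(\tau)\ge\delta_l$ give $(M(\tau)'M(\tau))_{jk}\ge M(\tau)_{jj}M(\tau)_{jk}\ge\varepsilon_l\delta_l$, so the corresponding term in the per-step identity already produces a decrement of at least $\gamma_l V(z(t_l))$. The genuine difficulty is the bookkeeping that certifies a large cross-edge difference actually survives until the connecting edge appears; controlling this drift uniformly over the interval, rather than only at $t=t_l$, is exactly what forces the $(1-\delta_l)^2$ and $\varepsilon_l$ factors.

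Finally I would iterate. Applying the single-interval bound to each full block indexed by $l=k^i,\dots,\widetilde{k}^t-1$, and invoking monotonicity of $V$ on the two partial (possibly incomplete) blocks at the ends of $[i+1,t]$ so that they contribute no factor but never increase $V$, the per-block contraction factors multiply to give precisely $V(z(t))\le V(z(i))\prod_{l=k^i}^{\widetilde{k}^t-1}(1-\gamma_l)$, which is the stated estimate.
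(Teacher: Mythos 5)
Your skeleton coincides with the paper's: balancedness (A2) plus $a(t)\in(0,1/d_{\max})$ makes each $A_t:=I-a(t)L(t)$ doubly stochastic, $V$ is non-increasing under such matrices (which disposes of the two incomplete blocks at the ends), and everything reduces to a per-block contraction factor $1-\delta_l(1-\delta_l)^2\varepsilon_l/(n(n-1)^2)$. The paper, however, does not re-derive that contraction: it verifies the two quantitative inputs $(A_t'A_t)_{ij}\geq(1-a(t)d_{\max})\left[(A_t)_{ij}+(A_t)_{ji}\right]$ and $\min_{\emptyset\subset S\subset\mathcal{V}}\sum_{i\in S,j\in S^c}\sum_{t=t_l}^{t_{l+1}-1}\left[(A_t)_{ij}+(A_t)_{ji}\right]\geq\delta_l$, and then invokes the proofs of Theorems 5 and 6 of \cite{Touri2011}. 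So the step you yourself flag as the main obstacle is precisely the content the paper imports, and your own argument for it has a genuine gap.

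The gap is the path-plus-retention mechanism for the single-interval contraction. First, the pigeonhole gives a consecutive pair on the path whose values differ by at least $\sqrt{V(z(t_l))/n}/(n-1)$ \emph{at time $t_l$}, but that edge is only active at some later time $\tau$, and the difference across this specific pair need not survive until $\tau$. Your proposed fix, controlling the drift through $M(t)_{ii}\geq\varepsilon_l$, cannot work: retention compounds multiplicatively, and a block has length up to $c\,t_l^{\delta}$, which is unbounded in $l$, so a node retaining weight $\varepsilon_l$ on itself at every step can still traverse the entire range of values within one block (and, incidentally, retention would produce powers of $\varepsilon_l$, not the factor $(1-\delta_l)^2$ you assert, so the bookkeeping is also internally inconsistent). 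Worse, the defect is not repairable along the same lines, because the difference across one fixed pair of nodes is simply not linked to the dissipation of $V$: two nodes whose values differ by $d$ can both be driven to a common value through interactions with third nodes carrying intermediate values spaced $\epsilon$ apart, at a total cost in $V$ of order $d\epsilon$, which is arbitrarily small. Hence no lower bound on the decrement can be extracted from a per-edge difference. What makes the lemma true is the cut structure used in Theorem 6 of \cite{Touri2011}: take the largest gap of the sorted entries of $z(t_l)$, which is at least $\sqrt{V(z(t_l))/(n(n-1)^2)}$ and, crucially, contains no intermediate values, and let $S$ be the nodes above it. Then either the two sides of the cut stay separated, in which case the accumulated cross-cut weight $\geq\delta_l$ guaranteed by (A1) feeds into your own per-step identity (via $(A_t'A_t)_{ij}\geq\varepsilon_l[(A_t)_{ij}+(A_t)_{ji}]$) and yields a decrement of the required order, or some value crosses the gap, which is only possible by absorbing cross-cut weight against a still-large separation and therefore also dissipates the required amount. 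That dichotomy is insensitive to when the edges appear; a path argument is not. To complete your proof you should either reproduce this cut-based argument or do as the paper does and cite \cite{Touri2011}.
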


We also characterize robustness of protocol (\ref{model0})-(\ref{model1}) with respect to noise.
 This will be accomplished by accommodating a large class of noises as specified below.

For any random variables $X$ and $Y$, let Corr$(X,Y):=\frac{EXY-EX EY}{\sqrt{\mbox{Var}X\mbox{Var}Y}}$ denote
 the linear correlation coefficient between $X$ and $Y$. Following \cite{Wu2008}, we employ
  the notion of \emph{$\widetilde{\rho}$-mixing sequences} of  random variables.
Let $\{X_i\}_{i\geq 1}$ be a random variable sequence. For any subset $S, T \subset \mathds{N}$,  the sub $\sigma$-algebra  $\mathcal{F}_S:=\sigma(X_i,i\in S)$
 and
 \begin{eqnarray*}
\rho\left(\mathcal{F}_S, \mathcal{F}_T\right):=\sup \left\{\mbox{Corr}(X,Y): X\in L_2(\mathcal{F}_S), Y\in L_2(\mathcal{F}_T)\right\}.
\end{eqnarray*}
Define the $\widetilde{\rho}$-mixing coefficients by
\begin{eqnarray*}
\begin{aligned}
&\widetilde{\rho}(m):=\sup \Big\{\rho\left(\mathcal{F}_S, \mathcal{F}_T\right): \\
&~~~~\mbox{finite sets } S,T\subset \mathds{N} \mbox{ such that} \min_{i\in S, j\in T}|i-j|\geq m \Big\}
\end{aligned}
\end{eqnarray*}
for any integer $m\geq 0$. By definition,
$0\leq \widetilde{\rho}(m+1) \leq \widetilde{\rho}(m) \leq 1$ for all $m\geq 0$, and $\widetilde{\rho}(0)=1$ except for the special case when  all  $X_i$ are degenerate.

\begin{definition}
A
sequence of random variables $\{X_i\}_{i\geq 1}$  is said to be a $\widetilde{\rho}$-mixing sequence if there exists an integer $m>0$
such that $\widetilde{\rho}(m)<1$.
\end{definition}

Under this definition, we give the following assumption for protocol (\ref{model0})-(\ref{model1}).

\vskip 1mm
{\bf (A3)}
For any network topology sequence $\{\mathcal{G}(t)\}_{t\geq 1}$,
the noise sequence $\{w_{ji}(t)\}_{t\geq 1, i=1,\ldots,n, j\in\mathcal{N}_i(t)}$ is a zero-mean
$\widetilde{\rho}$-mixing sequence satisfying $v:=\sup_{i,j,t} \mbox{Var}(w_{ji}(t))<\infty$.
\vskip 1mm

\begin{remark}\label{mix_rem}
It is well known that $\widetilde{\rho}$-mixing noises include as special cases $\phi$-mixing noises \cite{Wang2013},  i.i.d. noises and
martingale difference noises, see \cite{Shao1993}.
\end{remark}

A basic property of $\widetilde{\rho}$-mixing sequences is cited here.

\begin{lemma}[Theorem 2.1 in \cite{Utev2003}]\label{Lemma_4}
Suppose that for an integer $m\geq 1$ and a real number $0\leq r < 1$,
$\{X_i\}_{i\geq 1}$ is a sequence of random variables with $\widetilde{\rho}(m)\leq r$, with $EX_i=0$ and
$E|X_i|^2<\infty$ for every $i\geq 1$. Then there is a positive constant $D=D(m, r)$ such that for all $k\geq 1$,
$$E|\sum_{i=1}^k X_i |^2 \leq D \sum_{i=1}^k E|X_i|^2.$$
\end{lemma}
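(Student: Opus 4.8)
The plan is to reduce the statement to a bound on the covariance structure and then to exploit the defining maximal-correlation inequality of $\widetilde{\rho}$-mixing through a blocking-and-recursion scheme. First I would observe that only second moments of linear combinations enter, so it suffices to standardize the variables (set $\widetilde{X}_i=X_i/\sqrt{EX_i^2}$, discarding the degenerate terms) and to prove the equivalent statement that the correlation matrix $R=(\mathrm{Corr}(X_i,X_j))$, restricted to the first $k$ indices, has operator norm bounded by a constant $D=D(m,r)$: indeed $E|\sum_{i\le k}X_i|^2=w'Rw$ with $w_i=\sqrt{EX_i^2}$ and $\sum_{i\le k}EX_i^2=\|w\|^2$, so the claim is exactly $\|R\|_{\mathrm{op}}\le D$. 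This reformulation makes the role of the hypothesis transparent: for Gaussian sequences $\widetilde{\rho}(m)\le r<1$ corresponds to a spectral density bounded above and below, which forces $\mathrm{Var}(S_n)\asymp n$, whereas at $r=1$ (e.g. a constant-correlation sequence, for which $\widetilde{\rho}(m)\equiv 1$) the variance may grow like $n^2$; thus strictness of $r<1$ will have to be used in an essential way.

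The engine is the elementary consequence of the definition of $\widetilde{\rho}$: if $U$ is measurable with respect to $\{X_i:i\in S\}$ and $W$ with respect to $\{X_i:i\in T\}$, both mean-zero and square-integrable, with $\min_{i\in S,j\in T}|i-j|\ge m$, then $|E(UW)|\le\widetilde{\rho}(m)\sqrt{EU^2\,EW^2}\le r\sqrt{EU^2\,EW^2}$; combined with $2\sqrt{ab}\le a+b$ this yields the almost-orthogonality bound $E(U+W)^2\le(1+r)(EU^2+EW^2)$ for any two index blocks that are at least $m$ apart. The plan is then to group $1,\dots,k$ into consecutive blocks of length $m$, so that any two blocks whose block-indices differ by at least $2$ are $m$-separated, to split $S_k=U+W$ according to the parity of the block index (giving $ES_k^2\le 2(EU^2+EW^2)$), and to estimate $EU^2$ and $EW^2$ by recursively halving the ordered list of same-parity blocks: since consecutive same-parity blocks are $m$-separated, each halving produces two well-separated pieces to which the almost-orthogonality bound applies, while a Cauchy--Schwarz estimate controls each length-$m$ block by $m$ times its own sum of variances, supplying the base of the recursion.

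The main obstacle is controlling the accumulation of the factor $1+r>1$ across the roughly $\log_2 k$ recursion levels. Carried out naively, each halving multiplies the bound by $1+r$, so the recursion closes only with a constant of order $k^{\log_2(1+r)}$, which still grows with $k$ and is therefore too weak for the claimed uniform $D=D(m,r)$; crude pairwise summation of the correlation bounds fares even worse, producing a factor linear in the number of blocks, and a banded Schur-type estimate does not close either. Hence the entire difficulty is to show that the cross-terms do not compound, which is precisely where $r<1$ must be exploited globally rather than block-by-block, equivalently where one genuinely proves the operator-norm bound $\|R\|_{\mathrm{op}}\le D$. This is the technical core of Theorem 2.1 in \cite{Utev2003}, whose refined recursion keeps the effective per-level factor at most $1$; I would invoke that argument for this step and expect it to be the only delicate part, the reductions above and the base case being routine.
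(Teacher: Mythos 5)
The paper never proves this lemma at all: it is imported verbatim as Theorem 2.1 of \cite{Utev2003}, so the paper's own ``proof'' is simply the citation. Your proposal---after correct but routine reductions (standardization, the reformulation as an operator-norm bound on the correlation matrix, the almost-orthogonality estimate $E(U+W)^2\leq(1+r)(EU^2+EW^2)$ for $m$-separated blocks) and a correct diagnosis that the naive dyadic recursion compounds to a factor of order $k^{\log_2(1+r)}$, which is insufficient---ultimately defers the genuinely hard step to that same reference, so in effect you take exactly the paper's route; this is legitimate here, though your write-up would not stand alone as a self-contained proof of the quoted result.
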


Before the statement of the main result of this section, we give the following lemma first.

\begin{lemma}\label{Lemma_rate1}
Suppose that (A1) is satisfied with $\delta\leq 1/2$. Then for any constant $c_1>0$ and integer $t^*\geq 0$,
\begin{eqnarray}\label{Lemma_r1_00}
\prod_{j=k^{i}}^{\widetilde{k}^t-1}  \left(1-\frac{c_1}{t_{j+1}^{1-\delta}+t^*} \right)<\left(\frac{i^{1-\delta}+2c+t^*}{(t+1)^{1-\delta}+t^*}\right)^{\frac{c_1}{2c}},
\end{eqnarray}
and
\begin{eqnarray}\label{Lemma_r1_01}
&&\prod_{j=k^{i}}^{\widetilde{k}^t-1}  \left(1-\frac{c_1}{(t_{j+1}^{1-\delta}+t^*)(\log t_{j+1}+t^*)} \right)\nonumber\\
&& <\Big(\frac{\log[2c+i^{1-\delta}+t^*]}{\log[(t+1)^{1-\delta}+t^*]}\Big)^{\frac{c_1(1-\delta)}{2c}},
\end{eqnarray}
where $c$ is the same constant appearing in (A1).
\end{lemma}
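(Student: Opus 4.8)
The plan is to take logarithms and reduce each product to a lower bound on a sum, which I then estimate by comparison with a telescoping ``potential.'' In the regime of interest every factor lies in $(0,1)$, so I may use the elementary bound $1-x\le e^{-x}$ to write, for \eqref{Lemma_r1_00}, $\prod_{j=k^{i}}^{\widetilde{k}^t-1}(1-\frac{c_1}{t_{j+1}^{1-\delta}+t^*})\le\exp(-c_1\sum_{j=k^{i}}^{\widetilde{k}^t-1}\frac{1}{t_{j+1}^{1-\delta}+t^*})$. Thus everything comes down to showing $\sum_{j=k^{i}}^{\widetilde{k}^t-1}\frac{1}{t_{j+1}^{1-\delta}+t^*}\ge\frac{1}{2c}\log\frac{(t+1)^{1-\delta}+t^*}{i^{1-\delta}+2c+t^*}$, after which exponentiating gives the claim.

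The central estimate is a bound on the increments of $x\mapsto x^{1-\delta}$ along the sequence $\{t_k\}$. By the mean value theorem, and since $x\mapsto x^{1-\delta}$ has a decreasing derivative for $0\le\delta<1$, together with the spacing bound $t_{k+1}\le t_k+ct_k^{\delta}$ from (A1), I obtain $t_{k+1}^{1-\delta}-t_k^{1-\delta}\le(1-\delta)t_k^{-\delta}\cdot ct_k^{\delta}=c(1-\delta)\le c$. Taking the potential $f(x)=\log(x^{1-\delta}+t^*)$ and using $\log(1+u)\le u$, I then bound a \emph{shifted} increment, $f(t_{j+2})-f(t_{j+1})=\log(1+\frac{t_{j+2}^{1-\delta}-t_{j+1}^{1-\delta}}{t_{j+1}^{1-\delta}+t^*})\le\frac{c}{t_{j+1}^{1-\delta}+t^*}$. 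The shift by one is the key trick: it places exactly $t_{j+1}^{1-\delta}+t^*$ in the denominator, so each summand dominates $\frac1c[f(t_{j+2})-f(t_{j+1})]$ with no side condition on how small $t_j$ is, and the sum telescopes to $\frac1c[f(t_{\widetilde{k}^t+1})-f(t_{k^{i}+1})]$.

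It remains to match the endpoints to the quantities in the statement, and here the definitions of $k^{i}$ and $\widetilde{k}^t$ do the work. From $\widetilde{k}^t=\max\{k:t_k-1\le t\}$ I get $t_{\widetilde{k}^t+1}\ge t+2>t+1$, hence $f(t_{\widetilde{k}^t+1})\ge\log((t+1)^{1-\delta}+t^*)$; and from $k^{i}=\min\{k:t_k\ge i+1\}$ I get $t_{k^{i}-1}\le i$, so two applications of the increment bound give $t_{k^{i}+1}^{1-\delta}\le i^{1-\delta}+2c$ and $f(t_{k^{i}+1})\le\log(i^{1-\delta}+2c+t^*)$. This yields the desired sum bound (in fact with the better constant $1/c$ in place of $1/(2c)$, which only strengthens the conclusion), proving \eqref{Lemma_r1_00}.

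For \eqref{Lemma_r1_01} I would run the same scheme with the double-logarithm potential $G(x)=\log\log(x^{1-\delta}+t^*)$, whose derivative is $\frac{(1-\delta)x^{-\delta}}{(x^{1-\delta}+t^*)\log(x^{1-\delta}+t^*)}$; the identical mean-value/telescoping argument bounds the shifted increment $G(t_{j+2})-G(t_{j+1})$ by $\frac{c(1-\delta)}{(t_{j+1}^{1-\delta}+t^*)\log(t_{j+1}^{1-\delta}+t^*)}$, and the endpoints reproduce $\log[(t+1)^{1-\delta}+t^*]$ and $\log[2c+i^{1-\delta}+t^*]$ exactly as before. The main obstacle is the mismatch between the $\log t_{j+1}$ in the summand and the $\log(t_{j+1}^{1-\delta}+t^*)$ produced by $G$: to make each summand dominate a fixed multiple of the increment I must establish $\log(t_{j+1}^{1-\delta}+t^*)\ge\frac{(1-\delta)^2}{2}(\log t_{j+1}+t^*)$. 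This is precisely where the hypothesis $\delta\le1/2$ is used, since $\log(t_{j+1}^{1-\delta}+t^*)\ge(1-\delta)\log t_{j+1}$ and $1-\delta\ge1/2$ controls the dominant $\log t_{j+1}$ term; carrying the residual $t^*$-contribution through the constants is the delicate bookkeeping I expect to be the hardest part.
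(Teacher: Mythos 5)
Your proof of \eqref{Lemma_r1_00} is correct, and it takes a cleaner route than the paper's. The paper first proves by induction that $t_{k^i+j}\leq\left(c(j+1)+i^{1-\delta}\right)^{1/(1-\delta)}$, re-indexes the product by the interval count $j$, and then bounds the resulting harmonic-type sum by an integral; you instead telescope the potential $f(x)=\log(x^{1-\delta}+t^*)$ directly along $\{t_k\}$ via the one-step estimate $t_{k+1}^{1-\delta}-t_k^{1-\delta}\leq c(1-\delta)$. Both arguments rest on the same growth fact (your increment bound is exactly the differential form of the paper's induction), but your bookkeeping is lighter, and since you use $1-x\leq e^{-x}$ rather than the paper's $\log(1-x)<-x/2$, you obtain the exponent $c_1/c$ in place of $c_1/(2c)$. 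Two caveats, both shared with the paper's own proof: every factor must lie in $(0,1)$ for these manipulations, and the claim that the larger exponent ``only strengthens the conclusion'' needs the base ratio to be below one --- when it is not, the stated inequality is trivially true anyway because the left side is below $1$.

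For \eqref{Lemma_r1_01}, however, there is a genuine gap: your bridging inequality $\log(t_{j+1}^{1-\delta}+t^*)\geq\frac{(1-\delta)^2}{2}(\log t_{j+1}+t^*)$ is false, and no choice of constant can repair it, because as $t^*$ grows with $t_{j+1}$ fixed the left side grows like $\log t^*$ while the right side grows linearly in $t^*$ (e.g.\ $\delta=1/2$, $t_{j+1}=10$, $t^*=100$ gives $4.6<12.8$). The deeper issue is that the factor ``$\log t_{j+1}+t^*$'' in the statement is a misprint for ``$\log(t_{j+1}+t^*)$'': that is how the lemma is actually invoked in Case II of Theorem \ref{suff1}, where $a(t)=\alpha/\big((\sqrt{t}+t^*)\log(t+t^*)\big)$, and under the literal reading the claimed bound \eqref{Lemma_r1_01} itself fails for large $t^*$ (the left side is then $1-O(t/t^{*2})$ while the right side is $1-\Theta\big(t^{1-\delta}/(t^*\log t^*)\big)$, so the left side exceeds the right). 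With the corrected reading your scheme does close: replace your false inequality by $\log(t_{j+1}+t^*)\leq\frac{1}{1-\delta}\log(t_{j+1}^{1-\delta}+t^*)$, which holds for every integer $t^*\geq 0$ because $(a+b)^{1/(1-\delta)}\geq a^{1/(1-\delta)}+b^{1/(1-\delta)}$ and $(t^*)^{1/(1-\delta)}\geq t^*$; this costs exactly a factor $1-\delta$, which cancels the $1/(1-\delta)$ in your $G$-increment bound and yields the exponent $c_1/c\geq c_1(1-\delta)/(2c)$, as required. The paper, for its part, dismisses \eqref{Lemma_r1_01} with ``similar process'' and never confronts this point at all.
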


The proof of Lemma \ref{Lemma_rate1} is
in Appendix A.
The following theorem presents  a sufficient condition for consensus.

\begin{theorem}\label{suff1}
Suppose that (A1) is satisfied with  $\delta\leq 1/2$, and
(A2) and (A3) hold. Then for any initial state $x(1)$,
there exists an open-loop control of the gain sequence $\{a(t)\}$ such that the system (\ref{model0})-(\ref{model1}) reaches unbiased mean square average-consensus, with a convergence rate $E[V(x(t))]=O(1/t^{1-2\delta})$ if $\delta<1/2$, and $E[V(x(t))]=O(1/\log t)$
if $\delta=1/2$.
\end{theorem}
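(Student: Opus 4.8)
The plan is to decompose the analysis into the familiar error/noise split for stochastic approximation and then use the two auxiliary lemmas as the engine. First I would set $y(t) := x(t) - x_{\mathrm{ave}}(t)\mathds{1}$, or more precisely work with $V(x(t))$ directly, and observe that since the topologies are balanced (A2), the left eigenvector $\mathds{1}'$ of each $L(t)$ annihilates the consensus direction, so $\mathds{1}'x(t+1) = \mathds{1}'x(t) + a(t)\mathds{1}'\widehat{w}(t)$. This shows the average is a martingale with square-summable increments (once $a(t)$ is chosen $\ell^2$), which will yield the existence of $x^*$ and the unbiasedness $E(x^*) = \frac1n\sum_i x_i(1)$ in part (ii) of Definition \ref{consensus_def}. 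The main quantitative work is controlling $E[V(x(t))]$, the disagreement energy.

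Next I would iterate the recursion (\ref{model3}). Writing $x(t) = \Phi(t,1)x(1) + \sum_{i=1}^{t-1}\Phi(t,i+1)a(i)\widehat{w}(i)$ and applying $V(\cdot)$, the homogeneous term is governed by Lemma \ref{Lemma_3}, which gives a contraction factor $\prod_l (1 - \delta_l(1-\delta_l)^2\varepsilon_l/[n(n-1)^2])$ across the connectivity blocks $[t_l,t_{l+1})$. The noise term, after using that $V$ is a quadratic form vanishing on $\mathds{1}$ and that the noises are $\widetilde{\rho}$-mixing with bounded variance (A3), I would bound via Lemma \ref{Lemma_4}: the cross terms are controlled by the mixing constant $D$, reducing $E\|\sum_i \Phi(t,i+1)a(i)\widehat{w}(i)\|^2$ to a sum $\le D\sum_i a(i)^2 V(\Phi(t,i+1)\cdot)$-type contributions, each of which again contracts through the later blocks by Lemma \ref{Lemma_3}. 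The crucial choice is the gain schedule: I would take $a(t)$ of the form $c_1/(t^{1-\delta})$ (for $\delta<1/2$) or $c_1/(t^{1/2}\log t)$ (for $\delta=1/2$), so that on each block $\delta_l \approx c_1/t_{l+1}^{1-\delta}$ and the product in Lemma \ref{Lemma_3} matches the hypotheses of Lemma \ref{Lemma_rate1}. Feeding the block-minimum gains into (\ref{Lemma_r1_00}) and (\ref{Lemma_r1_01}) then converts the contraction product into the explicit polynomial rate $((i^{1-\delta})/(t^{1-\delta}))^{c_1/(2c)}$, respectively the logarithmic rate.

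The final step is the bookkeeping that assembles the homogeneous decay and the summed noise contributions into the claimed orders. For $\delta<1/2$ one balances the exponent $c_1/(2c)$ against the $\sum a(i)^2 \sim \sum i^{-2(1-\delta)}$ tail; choosing $c_1$ large enough (so the contraction exponent exceeds the relevant threshold) makes the noise sum dominate and produces $E[V(x(t))] = O(t^{-(1-2\delta)})$. For the boundary case $\delta=1/2$ the polynomial contraction degenerates, which is exactly why the second estimate (\ref{Lemma_r1_01}) with the $\log$ correction is needed, yielding $O(1/\log t)$. I expect the main obstacle to be the noise term: one must propagate each noise injection $a(i)\widehat{w}(i)$ through the \emph{remaining} contraction blocks and show the resulting double sum over $i$ and blocks still telescopes to the stated rate, all while the gains $a(t)\in(0,1/d_{\max})$ shrink so that the per-block contraction weakens as $t$ grows. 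Keeping the $\widetilde{\rho}$-mixing constant $D$ uniform across the infinitely many blocks, and verifying that the chosen $a(t)$ simultaneously satisfies $\sum a(t)=\infty$ (for consensus) and a summable-enough $\sum a(t)^2$ weighted by the decaying contraction (for the variance), is where the estimates of Lemma \ref{Lemma_rate1} must be applied most carefully.
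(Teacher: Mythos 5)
Your proposal is correct and follows essentially the same route as the paper's proof: iterate (\ref{model3}), split $E[V(x(t))]$ into the homogeneous term plus the mixed-noise sum, bound the noise sum coordinatewise by Lemma \ref{Lemma_4}, contract each injected term through the remaining connectivity blocks via Lemma \ref{Lemma_3}, and convert the block product into explicit rates through (\ref{Lemma_r1_00}) and (\ref{Lemma_r1_01}) under the gain schedules $a(t)\sim \alpha/t^{1-\delta}$ (resp. $a(t)\sim \alpha/(\sqrt{t}\log t)$) with $\alpha$ chosen large enough that the contraction exponent dominates. The one minor imprecision is calling $\mathds{1}'x(t)$ a martingale: under (A3) the noises are only $\widetilde{\rho}$-mixing, not necessarily martingale differences, so the existence, unbiasedness, and finite variance of $x^*$ should instead be drawn from Lemma \ref{Lemma_4} together with square-summability of $a(t)$, exactly as the paper does.
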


\begin{proof}
Case I: $\delta<1/2$.
Choose $a(t)=\frac{\alpha}{t^{1-\delta}+t^*}$ with
\begin{eqnarray}\label{crit3_1}
\alpha\geq \frac{32n(n-1)^4 c}{(2n-3)^2}~~\mbox{and}~~t^*\geq \lfloor 2\alpha (n-1) a_{\max}\rfloor.
\end{eqnarray}
First we get $a(t)\leq \frac{\alpha}{ 2\alpha (n-1) a_{\max}} \leq \frac{1}{2d_{\max}}$, which indicates
$I-a(t) L(t)$ is a nonnegative matrix for any $t\geq 1$.
Recall that
$$\Phi(t,i)=\left[I-a(t) L(t)\right]\cdots \left[I-a(i) L(i)\right]$$
and $\prod_{i=j}^t (\cdot)=I$ for $t<j$.
Using (\ref{model3}) repeatedly, we get
\begin{eqnarray}\label{the_1_2}
\begin{aligned}
x(t+1)=\Phi(t,1)x(1)+\sum_{i=1}^{t}a(i)\Phi(t,i+1)\widehat{w}(i).
\end{aligned}
\end{eqnarray}
Take $\pi=(\frac{1}{n},\ldots,\frac{1}{n})\in\mathbb{R}^n$. By (A2) we have $\pi L(t)=0$, and hence $\pi \Phi(t,i)=\pi$.
 Take
\begin{eqnarray}\label{the_yi_def}
Y(i)=a(i)\left[\Phi(t,i+1)\widehat{w}(i)- \left(\pi\widehat{w}(i)\right)\mathds{1}  \right]\in\mathbb{R}^n.
\end{eqnarray}
Then by (\ref{the_1_2}), we have
\begin{eqnarray}\label{the_1_2_1}
&&x(t+1)-x_{\rm{ave}}(t+1)\mathds{1}=x(t+1)-(\pi x(t+1))\mathds{1}\nonumber\\
&&=\Phi(t,1)x(1)-\left(\pi x(1)\right)\mathds{1}\nonumber\\
&&~~~~ +\sum_{i=1}^{t}a(i)\left[\Phi(t,i+1)\widehat{w}(i)-(\pi \widehat{w}(i))\mathds{1}\right]\nonumber\\
&&=\Phi(t,1)x(1)-\left(\pi \Phi(t,1) x(1)\right)\mathds{1} +\sum_{i=1}^{t}Y(i).
\end{eqnarray}
Because $V(x)=\|x-(\pi x)\mathds{1}\|^2$,  by (A3) and (\ref{the_1_2_1}) we have
\begin{eqnarray}\label{the_1_2_2}
\begin{aligned}
E[V(x(t+1))]
= V\left(\Phi(t,1)x(1) \right)+E\Big\|\sum_{i=1}^{t}Y(i)\Big\|^2.
\end{aligned}
\end{eqnarray}
With Lemma \ref{Lemma_4} we have
\begin{eqnarray}\label{the_1_7}
&&E\Big\|\sum_{i=1}^{t}Y(i)\Big\|^2=E\sum_{j=1}^n \left[\sum_{i=1}^{t}Y_j(i)\right]^2\nonumber\\
&&\leq O\bigg( \sum_{j=1}^n \sum_{i=1}^{t}E Y_j^2(i)\bigg)=O\bigg(\sum_{i=1}^{t}E\|Y(i)\|^2\bigg)\nonumber\\
&&\leq O\bigg( \sum_{i=1}^{t}a^2(i)E\left[V\left(\Phi(t,i+1)\widehat{w}(i)\right)\right]\bigg).
\end{eqnarray}
By Lemma \ref{Lemma_3}, (\ref{crit3_1}) and (\ref{Lemma_r1_00}) we have for any $x\in\mathbb{R}^n$,
\begin{eqnarray}\label{crit3_4}
&&E\left[V\left(\Phi(t,i+1)\widehat{w}(i)\right)\right]\nonumber\\
&&\leq E\left[V\left(\widehat{w}(i)\right)\right] \prod_{j=k^{i}}^{\widetilde{k}^t-1}  \bigg[1-\left(\frac{2n-3}{2(n-1)}\right)^2\frac{a(t_{j+1})}{2n(n-1)^2}  \bigg]\nonumber\\
&&\leq E\left[V\left(\widehat{w}(i)\right)\right]\prod_{j=k^{i}}^{\widetilde{k}^t-1}  \left(1-\frac{4c}{t_{j+1}^{1-\delta}+t^*}\right)\nonumber\\
&&< E\left[V\left(\widehat{w}(i)\right)\right]\left(\frac{2c+i^{1-\delta}+t^*}{(t+1)^{1-\delta}+t^*}\right)^2.
\end{eqnarray}
Because $E[V\left(\widehat{w}(i)\right)]$ is bounded, from (\ref{the_1_7}) and (\ref{crit3_4}) we get
\begin{eqnarray}\label{crit3_7}
&&E\Big\|\sum_{i=1}^{t}Y(i)\Big\|^2= O\bigg(\frac{1}{[(t+1)^{1-\delta}+t^*]^{2}} \nonumber\\
&&~~\cdot \sum_{i=1}^{t}\frac{(2c+i^{1-\delta}+t^*)^{2}}{(i^{1-\delta}+t^*)^2}\bigg)=O\left(\frac{1}{t^{1-2\delta}}\right).
\end{eqnarray}
Also, similar to (\ref{crit3_4}) we get $V(\Phi(t,1)x(1))=O(1/t^{2-2\delta})$, so
taking (\ref{crit3_7}) into (\ref{the_1_2_2}) yields  $E[V(x(t))]=O(1/t^{1-2\delta})$.

It remains to evaluate the limit of $x_{\rm{ave}}(t)$.
Let
 \begin{eqnarray}\label{the_1_19_b1}
\begin{aligned}
x^*=x_{\rm{ave}}(\infty)=\pi x(\infty)= \pi x(1)+  \sum_{i=1}^{\infty}a(i)\pi \widehat{w}(i).
 \end{aligned}
\end{eqnarray}
By (\ref{the_1_19_b1}) we
 obtain
\begin{eqnarray}\label{the_1_19}
Ex^*=\pi x(1)+  \sum_{i=1}^{\infty}a(i)\pi E\widehat{w}(i)=\pi x(1).
\end{eqnarray}
Also, by Lemma \ref{Lemma_4} we have
\begin{eqnarray}\label{the_1_20}
&&\mbox{Var}(x^*)=E\Big|\sum_{i=1}^{\infty}a(i)\pi \widehat{w}(i)\Big|^2\\
&&=O\Big(\sum_{i=1}^{\infty} a^2(i) \Big)=O\Big(\sum_{i=1}^{\infty} \frac{\alpha^2}{(t^{1-\delta}+t^*)^2} \Big)<\infty,\nonumber
\end{eqnarray}
so by Definition \ref{consensus_def} the system (\ref{model0})-(\ref{model1}) reaches unbiased mean square average-consensus.\\
Case II: $\delta=1/2$.
Choose $a(t)=\frac{\alpha}{(\sqrt{t}+t^*)\log(t+t^*)}$ with
\begin{eqnarray*}\label{crit3_1a}
\alpha\geq \frac{64n(n-1)^4 c}{(2n-3)^2}~~\mbox{and}~~t^*\geq \lfloor 2\alpha (n-1) a_{\max}\rfloor.
\end{eqnarray*}
With (\ref{Lemma_r1_01}), similar to (\ref{crit3_4}) we get
\begin{eqnarray*}\label{crit3_4a}
&&E\left[V\left(\Phi(t,i+1)\widehat{w}(i)\right)\right]\nonumber\\
&&< E\left[V\left(\widehat{w}(i)\right)\right]\Big(\frac{\log[2c+\sqrt{i}+t^*]}{\log[\sqrt{t+1}+t^*]}\Big)^2,
\end{eqnarray*}
so similar to (\ref{crit3_7}) we have
\begin{eqnarray}\label{crit3_7a}
&&E\Big\|\sum_{i=1}^{t}Y(i)\Big\|^2= O\bigg(\frac{1}{\log^2[\sqrt{t+1}+t^*]} \\
&&~~\cdot \sum_{i=1}^{t}\frac{\log^2[2c+\sqrt{i}+t^*]}{(\sqrt{i}+t^*)^2 \log^2(i+t^*)}\bigg)=O\left(\frac{1}{\log t}\right).\nonumber
\end{eqnarray}
Similar to Case I, taking (\ref{crit3_7a}) into (\ref{the_1_2_2}) yields  $E[V(x(t))]=O(1/\log t)$. Also, because $\sum_{l=2}^{\infty} \frac{1}{l \log^2 l}<\infty$ (Subsection 1.3.9 in \cite{Zwillinger2003}) we get $\mbox{Var}(x^*)=O(\sum_{i=1}^{\infty} a^2(i))<\infty$. With the same discussion as Case I
the system (\ref{model0})-(\ref{model1}) reaches unbiased mean square average-consensus.
\end{proof}




\subsection{Necessary Conditions for Consensus}\label{nece_sec}

Let $\mathcal{G}_1=(\mathcal{V},\mathcal{E}_1)$ be an undirected complete graph, which means each vertex can receive the information of all the others. Let $\mathcal{G}_2=(\mathcal{V},\mathcal{E}_2)$ be the graph which has only one undirected edge between vertexes $1$ and $2$ without any other edges.
For both $\mathcal{G}_1$ and $\mathcal{G}_2$, we assume the weights of their edges are all equal to $1$.
Thus, the corresponding Laplacian matrices for  $\mathcal{G}_1$ and  $\mathcal{G}_2$ are $L_1=n I-\mathds{1}\mathds{1}'
\in \mathds{R}^{n\times n}$ and
\begin{eqnarray*}\label{nece1_3}
L_2=\left(
\begin{array}{ccccc}
1 & -1 & 0 & \cdots & 0 \\
-1 & 1 & 0 & \cdots & 0 \\
0 & 0 & 0 & \cdots & 0 \\
\cdots & \cdots & \cdots & \ddots & \cdots \\
0 & 0 & 0 & \cdots &0
\end{array}
\right)\in \mathds{R}^{n\times n}.
\end{eqnarray*}
respectively.
Define $v_1:=n^{-1/2} \mathds{1}\in\mathds{R}^n$, $v_2:=\frac{1}{\sqrt{2}}(1,-1,0,\ldots,0)'\in\mathds{R}^n$, and
$$v_i:=(i^2-i)^{-1/2}(\underbrace{1,\ldots,1}_{i-1},1-i,0,\ldots,0)' \in\mathds{R}^n$$
for $i\in [3,n]$.
It is easy to compute that: $v_i'v_j=0$ for $i\neq j$; $L_1 v_1=\mathbf{0}$; $L_1 v_i=n v_i$ for $i\geq 2$; $L_2 v_2=2v_2$ and $L_2 v_i=\mathbf{0}$ for $i\neq 2$. From this we have the following proposition:

\begin{proposition}\label{prop_matrix}
Let $P:=(v_1,v_2,\ldots,v_n)\in\mathds{R}^{n\times n}$.
Then $P'P=I$,
$P\mbox{diag}(0,n,n,\ldots,n)P'=L_1$, and $P\mbox{diag}(0,2,0,\ldots,0)P'=L_2$.
\end{proposition}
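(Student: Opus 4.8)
The plan is to read this as a spectral-decomposition statement: the columns of $P$ form an orthonormal basis of $\mathds{R}^n$ that simultaneously diagonalizes the symmetric matrices $L_1$ and $L_2$, and the two identities are nothing more than writing out $A = P\Lambda P'$ for each matrix. The preamble to the proposition already lists the eigenrelations $L_1 v_1 = \mathbf{0}$, $L_1 v_i = n v_i$ for $i\geq 2$, $L_2 v_2 = 2 v_2$, $L_2 v_i = \mathbf{0}$ for $i\neq 2$, together with the orthogonality $v_i'v_j = 0$ for $i\neq j$. So my task is to confirm these elementary facts and then assemble them, the content of the proposition being precisely the assembly into matrix form.

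First I would establish orthonormality, which yields $P'P = I$. Each column has unit norm by the choice of normalizing constants: for $i\geq 3$ one computes $\|v_i\|^2 = (i^2-i)^{-1}[(i-1) + (i-1)^2] = (i^2-i)^{-1}(i-1)i = 1$, and the cases $i=1,2$ are immediate. For orthogonality I would exploit the Helmert-type structure of the basis: every $v_i$ with $i\geq 2$ has vanishing coordinate-sum, and whenever $j<i$ the vector $v_i$ is constant on the first $i-1$ coordinates, which already contain the entire support of $v_j$. Hence $v_i'v_j$ equals that constant value times the coordinate-sum of $v_j$, which is zero when $j\geq 2$; the remaining pairs $(1,i)$ vanish symmetrically, since $v_i$ (for $i\geq 2$) has zero sum while $v_1$ is constant. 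Because $P$ is square, $P'P = I$ forces $PP' = I$ as well, so $P' = P^{-1}$.

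Next I would verify the eigenrelations directly. For $L_1 = nI - \mathds{1}\mathds{1}'$ the whole computation reduces to the scalar $\mathds{1}'v_i$, which equals $\sqrt{n}$ for $i=1$ and $0$ for $i\geq 2$ by the zero-sum property; this gives $L_1 v_1 = \mathbf{0}$ and $L_1 v_i = n v_i$. For $L_2$, which acts nontrivially only on coordinates $1$ and $2$, the antisymmetric vector $v_2$ is mapped to $2 v_2$, whereas every other $v_i$ is constant on coordinates $1,2$ (this holds for $v_1$ and for each $v_i$ with $i\geq 3$) and is therefore annihilated by $L_2$.

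Finally I would assemble the decompositions. The eigenrelations say, column by column, that $L_1 P = P\,\mbox{diag}(0,n,\ldots,n)$ and $L_2 P = P\,\mbox{diag}(0,2,0,\ldots,0)$; right-multiplying each identity by $P' = P^{-1}$ produces exactly the two stated formulas. There is no genuine obstacle here, since the argument is a finite verification rather than an estimate; the only place that demands care is the orthogonality bookkeeping in the second step, where one must keep straight, for each pair of indices, which vector is constant on the relevant support and which has vanishing coordinate-sum, as this is what makes every cross term collapse.
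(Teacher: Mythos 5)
Your proposal is correct and follows essentially the same route as the paper: the paper simply lists the orthogonality relations $v_i'v_j=0$ and the eigenrelations $L_1v_1=\mathbf{0}$, $L_1v_i=nv_i$ ($i\geq 2$), $L_2v_2=2v_2$, $L_2v_i=\mathbf{0}$ ($i\neq 2$) as easy computations and states that the proposition follows. Your write-up is exactly this argument with the normalization, Helmert-type orthogonality bookkeeping, and the assembly $L_kP=P\Lambda_k \Rightarrow L_k=P\Lambda_kP'$ carried out explicitly, all of which checks out.
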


The necessary condition says for any gain sequence, the consensus may  not be reached when the extensible exponent $\delta$ is larger than $1/2$ under the following noise condition:

{\bf (A4)}
For any network topology sequence $\{\mathcal{G}(t)\}_{t\geq 1}$,
assume the noises $\{w_{ji}(t)\}$ are zero-mean random variables satisfying: i)
$E[w_{j_1 i_1}(t_1)w_{j_2 i_2}(t_2)]=0$ for any $t_1\neq t_2$, $(j_1,i_1)\in\mathcal{E}(t_1)$, and $(j_2,i_2)\in\mathcal{E}(t_2)$;
ii) there exist constants $0<\underline{v}\leq \overline{v}$ such that for any non-empty edge set $\mathcal{E}(t)$ and real numbers $\{c_{ji}\}$,
\begin{eqnarray*}\label{A4_00}
\underline{v} \sum_{(j,i)\in\mathcal{E}(t)} c_{ji}^2 \leq E\bigg[\sum_{(j,i)\in\mathcal{E}(t)} c_{ji} w_{ji}(t)  \bigg]^2\leq \overline{v} \sum_{(j,i)\in\mathcal{E}(t)} c_{ji}^2.
\end{eqnarray*}

\begin{theorem}\label{nece1}
Suppose the noise satisfies (A4). Then for any constant $\delta^*>1/2$,  any non-consensus
initial state, and any gain sequence $\{a(t)\}_{t\geq 1}$,
there exists at least one topology sequence
$\{\mathcal{G}(t)\}_{t\geq 1}=\{(\mathcal{V},\mathcal{E}(t),\mathcal{A}(t))\}_{t\geq 1}$ satisfying (A1)-(A2) with $\delta=\delta^*$,  such that system (\ref{model0})-(\ref{model1}) cannot reach mean square consensus.
\end{theorem}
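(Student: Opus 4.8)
The plan is to fix an arbitrary non-consensus initial state $x(1)$ and an arbitrary gain sequence $\{a(t)\}_{t\geq1}$, and then to construct one topology sequence, built from only the two graphs $\mathcal{G}_1$ and $\mathcal{G}_2$ of Proposition~\ref{prop_matrix}, on which consensus fails. I would take interval endpoints $1=t_1<t_2<\cdots$ growing as fast as (A1) allows, say $t_k=t_{k-1}+\lceil c\,t_{k-1}^{\delta^*}\rceil$, so that (A1) holds with $\delta=\delta^*$ and the lengths satisfy $L_k:=t_k-t_{k-1}=\Theta(t_{k-1}^{\delta^*})$. On each interval $I_k:=\{t_{k-1},\ldots,t_k-1\}$ I set $\mathcal{G}(t)=\mathcal{G}_1$ at the single instant $t_k^*:=\arg\min_{t\in I_k}a(t)$ and $\mathcal{G}(t)=\mathcal{G}_2$ elsewhere. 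Since $\mathcal{G}_1$ is complete, the union over each interval is strongly connected, and since both graphs are undirected with unit weights, (A2) and (\ref{a_assum}) hold automatically. The reason for using $\mathcal{G}_2$ almost everywhere is that, by Proposition~\ref{prop_matrix}, the eigenvector $v_3$ obeys $L_2v_3=\mathbf{0}$ and $L_1v_3=nv_3$ (here $n\geq3$, so that $v_3$ exists; for $n=2$ the graph $\mathcal{G}_2$ already equals $\mathcal{G}_1$ and the claim is degenerate). Thus the component of the state along $v_3$ is contracted only on the sparse $\mathcal{G}_1$ steps and merely accumulates noise, as a random walk, on the long $\mathcal{G}_2$ stretches.

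Projecting (\ref{model3}) onto $v_3$ (which is orthogonal to $\mathds{1}$) and writing $y_3(t):=v_3'x(t)$, the symmetry of $L(t)$ gives the scalar recursion $y_3(t+1)=\lambda(t)y_3(t)+a(t)\zeta(t)$ with $\lambda(t)=1$ on $\mathcal{G}_2$ steps, $\lambda(t)=1-na(t)$ on $\mathcal{G}_1$ steps, and $\zeta(t):=v_3'\widehat{w}(t)$. By (A4) the sequence $\{\zeta(t)\}$ is zero-mean and uncorrelated in time, so $r(t):=E[y_3(t)^2]$ satisfies exactly
\[
r(t+1)=\lambda(t)^2\,r(t)+a(t)^2E[\zeta(t)^2],
\]
and evaluating the lower bound in (A4)(ii) on the single edge of $\mathcal{G}_2$ gives $E[\zeta(t)^2]\geq\underline{v}/3$ at every $\mathcal{G}_2$ step. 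Because $E[V(x(t))]\geq r(t)$ and mean square consensus forces $E[V(x(t))]\to0$, it is enough to prove $\liminf_t r(t)>0$. The easy case is disposed of first: if $a(t)\not\to0$, then placing the $\mathcal{G}_1$ instant away from a large-gain step leaves infinitely many $\mathcal{G}_2$ steps with $r(t+1)\geq\frac{\underline v}{3}a(t)^2$ bounded below, so from now on I may assume $a(t)\to0$.

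Writing $m_k:=\min_{t\in I_k}a(t)$, $\rho_k:=(1-nm_k)^2$, and $\beta_k:=\frac{\underline v}{3}\sum_{t\in I_k}a(t)^2\geq\frac{\underline v}{3}L_km_k^2$, iterating the recursion across intervals and retaining only the single sub-unit contraction per interval gives $r(t_K)\geq R_K$, where $R_K=\rho_K(R_{K-1}+\beta_K)=\sum_{k\leq K}\big(\prod_{k\leq k'\leq K}\rho_{k'}\big)\beta_k$. If $\sum_k m_k<\infty$ then $\prod_k\rho_k=:\rho_\infty>0$, whence $R_K\geq\rho_\infty\sum_{k\leq K}\beta_k$ is bounded below as soon as any gain is positive (and if every gain vanishes the state is frozen at the non-consensus $x(1)$); consensus fails. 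The remaining case $\sum_k m_k=\infty$ is where the threshold enters.

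The exponent $\delta^*>1/2$ is used only through the elementary fact $\sum_k 1/L_k=\Theta\big(\sum_k t_{k-1}^{-\delta^*}\big)=\Theta\big(\int t^{-2\delta^*}dt\big)<\infty$, valid precisely when $2\delta^*>1$. Looking back from a large $K$, I would select a window $W_K$ of consecutive intervals accumulating mass $\sum_{k\in W_K}m_k\in[\theta/2,\theta]$ for a fixed small $\theta$ (possible since $\sum_k m_k=\infty$ and $m_k\to0$); taking $\theta$ small forces $\prod_{k\leq k'\leq K}\rho_{k'}\geq\frac12$ throughout $W_K$, so that
\[
r(t_K)\geq R_K\geq\tfrac12\sum_{k\in W_K}\beta_k\geq\tfrac{\underline v}{6}\sum_{k\in W_K}L_km_k^2\geq\tfrac{\underline v}{6}\,\frac{\big(\sum_{k\in W_K}m_k\big)^2}{\sum_k 1/L_k}\geq\tfrac{\underline v\,\theta^2}{24\sum_k 1/L_k}
\]
by the Cauchy--Schwarz inequality, a positive constant independent of $K$. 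Hence $\liminf_t r(t)>0$ and no mean square consensus is possible. I expect this last estimate to be the crux: the content of the theorem is that no fixed gain sequence can simultaneously contract the weakly connected direction $v_3$ strongly enough to erase the accumulated random-walk noise and keep that injected noise summable. The windowing plus Cauchy--Schwarz step is what converts the borderline summability $\sum_k 1/L_k<\infty$ (equivalently $\delta^*>1/2$) into a strictly positive lower bound on the disagreement; the same sum diverges for $\delta^*\leq1/2$, matching the sufficiency in Theorem~\ref{suff1}.
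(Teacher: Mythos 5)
Your construction is exactly the paper's: intervals $t_k$ growing at the maximal rate allowed by (A1), the complete graph $\mathcal{G}_1$ placed at the single argmin instant of each interval, the one-edge graph $\mathcal{G}_2$ elsewhere, and Proposition~\ref{prop_matrix} supplying the simultaneous diagonalization. Where you genuinely depart from the paper is in how the non-vanishing lower bound is extracted. The paper argues by contradiction: assuming $E[V(x(t))]\to 0$ it deduces $\prod_k(1-na(t_k^*))\to 0$, then splits each $a(t_j^*)$ by the indicator $I_{\{a(t_j^*)>t_j^{-\delta}\}}$, uses $\sum_j t_j^{-\delta}<\infty$ (this is where $\delta>1/2$ enters, via $t_j\gtrsim j^2$) to bound the small-gain factors below, and converts the surviving sum into $1-\prod_j\left[1-2na(t_j^*)I_{\{\cdot\}}\right]$ through the telescoping identity $\sum_i b_i\prod_{j>i}(1-b_j)=1-\prod_i(1-b_i)$. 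You instead track the exact scalar recursion for $r(t)=E[(v_3'x(t))^2]$, split on whether $\sum_k m_k$ converges, and in the divergent case obtain a uniform bound by pairing a backward window of fixed mass $\theta$ with Cauchy--Schwarz against $\sum_k 1/L_k<\infty$ --- which is the same summability fact, since $1/L_k=\Theta(t_{k-1}^{-\delta^*})$. Your route buys an unconditional $\liminf$ bound with no contradiction argument, a one-dimensional recursion in place of the paper's manipulation of $\widetilde{\Phi}(t,i)$ and its first coordinate $y_1$, and it isolates the role of $\delta^*>1/2$ in the single inequality $\big(\sum_{k\in W_K}m_k\big)^2\le\big(\sum_{k\in W_K}L_km_k^2\big)\big(\sum_k 1/L_k\big)$; the paper's telescoping route yields a more explicit constant but obscures this mechanism.

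Three points to tighten, none structural. (i) Your argument needs $n\ge 3$; this restriction is shared (implicitly) by the paper, whose bound $Ey_1^2=c'(b_{i+1}^t)^2$ also requires $\tfrac12-\tfrac1n>0$. (ii) In the case $\sum_k m_k<\infty$, the bound $R_K\ge\rho_\infty\sum_{k\le K}\beta_k$ degenerates if some early $\rho_k=(1-nm_k)^2$ vanishes exactly (gain exactly $1/n$ at the $\mathcal{G}_1$ instant); since you have already reduced to $a(t)\to 0$, only finitely many intervals can do this, so restart the product past them, and if every gain beyond that point is zero, observe that the noise injected at the last positive-gain step is never contracted afterwards, so $r$ stays bounded away from zero. (iii) Your $\beta_k$ sums $a(t)^2$ over all of $I_k$, but you justified $E[\zeta(t)^2]\ge\underline{v}/3$ only at $\mathcal{G}_2$ steps; either drop the $\mathcal{G}_1$ term (using $L_k-1\ge L_k/2$) or note that at a $\mathcal{G}_1$ step the coefficient vector of $\zeta(t)=v_3'\widehat{w}(t)$ has squared norm $n-1\ge 1/3$, so (A4) gives the same bound there.
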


\begin{proof}
The main idea of this proof is: Choose $t_k=t_{k-1}+c \lfloor t_{k-1}^{\delta}\rfloor$. Let $\{a(t)\}_{t\geq 1}$ be an arbitrary gain sequence.
For any $k\geq 1$ and $t_k\leq t<t_{k+1}$, select $\mathcal{G}(t)$ to be $\mathcal{G}_1$ if $a(t)$ is the minimal value of $\{a(s),t_k\leq s<t_{k+1}\}$, and to be
$\mathcal{G}_2$ otherwise. It can be verified that our choice satisfies both (A1) and (A2). With Proposition \ref{prop_matrix}, we
conclude that  system
(\ref{model0})-(\ref{model1}) cannot reach  consensus in mean square. The detailed proof is in Appendix \ref{App_nece1}.
\end{proof}

\subsection{A Critical Condition for Consensus}

The consensus conditions of the first-order average-consensus protocols with deterministic topologies and additive noises
have been investigated recently. However, the
best known condition on topology for consensus to date is the uniform joint-connectivity \cite{Touri2009,Li2010,Huang2012}. On the other hand, if this type of protocols contains no noise, they can reach consensus under a much relaxed infinite joint-connectivity condition \cite{Touri2011}. There exists a huge gap between these two consensus conditions. This paper proposes an extensible joint-connectivity condition which is an intermediate condition between the uniform joint-connectivity and infinite joint-connectivity.
Under our new condition we investigate a basic problem:  what is the critical extensible exponent under which we can control the system to reach consensus?
Note that there does not exist a central controller who knows the global information during the system's evolution, and
the consensus control is defined by designing off-line gains $a(t)$ such that all the agents achieve the same final state.

Let $\mathscr{G}$ be the set of topology sequences satisfying (A1) and (A2),
and $\mathscr{W}$ be the set of  noise sequences satisfying (A3).
\begin{theorem}\label{ce_1}
If and only if $\delta\leq 1/2$,  where $\delta$ is the extensible exponent appearing in (A1),
there exists an open-loop control of the gain sequence $\{a(t)\}$ such that system (\ref{model0})-(\ref{model1}) reaches unbiased mean square average-consensus for any  topology sequence $\{\mathcal{G}(t)\}\in\mathscr{G}$, any noise sequence $\{w_{ji}(t)\}\in\mathscr{W}$,  and any non-consensus initial state.
\end{theorem}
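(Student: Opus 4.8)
The plan is to obtain this critical characterization by gluing together the two one-sided results already established: the sufficiency (Theorem \ref{suff1}) gives the "if" direction, and the lower-bound construction (Theorem \ref{nece1}) gives the "only if" direction. I would treat the two implications separately, since the only non-routine content lies in checking the quantifier structure and reconciling the two different noise hypotheses.

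For the sufficiency ("if") direction, suppose $\delta\leq 1/2$. Theorem \ref{suff1} is the engine, but its statement has the weaker quantifier order (fix the topology and noise, then a control exists), whereas the present claim asserts a \emph{single} open-loop control that succeeds simultaneously for every $\{\mathcal{G}(t)\}\in\mathscr{G}$ and every $\{w_{ji}(t)\}\in\mathscr{W}$. I would therefore not treat Theorem \ref{suff1} as a black box but point into its proof: the gain constructed there, namely $a(t)=\alpha/(t^{1-\delta}+t^*)$ when $\delta<1/2$ and its Case~II analogue when $\delta=1/2$, is built only from the structural constants common to the whole class (the exponent $\delta$, the connectivity constant $c$ from (A1), the size $n$, and the uniform bound $a_{\max}$ from (\ref{a_assum})); it never references the particular topology realization or noise path. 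Hence this one control delivers unbiased mean square average-consensus uniformly over $\mathscr{G}\times\mathscr{W}$ and every non-consensus initial state, which is exactly the forward implication.

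For the necessity ("only if") direction, I would argue contrapositively: assuming $\delta>1/2$, show that no open-loop control can work across all of $\mathscr{G}\times\mathscr{W}$. Fix an arbitrary gain sequence $\{a(t)\}$. Theorem \ref{nece1}, applied with $\delta^*=\delta>1/2$, already manufactures a topology sequence in $\mathscr{G}$ on which consensus fails for that gain sequence; the one gap is that its hypothesis is phrased in terms of (A4), while the class $\mathscr{W}$ is defined by (A3). The bridging step is to exhibit a noise sequence in the intersection of the two noise classes. I would take the $\{w_{ji}(t)\}$ to be mutually independent and zero-mean with a common positive variance $\sigma^2$: such a sequence is i.i.d., hence $\widetilde{\rho}$-mixing with $\widetilde{\rho}(m)=0$ for $m\geq 1$ (Remark \ref{mix_rem}) and so lies in $\mathscr{W}$, while independence yields (A4)(i) and the identity $E[\sum_{(j,i)}c_{ji}w_{ji}(t)]^2=\sigma^2\sum_{(j,i)}c_{ji}^2$ yields (A4)(ii) with $\underline{v}=\overline{v}=\sigma^2$. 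With this noise fixed, Theorem \ref{nece1} defeats the chosen control with a topology in $\mathscr{G}$, so no open-loop control succeeds when $\delta>1/2$.

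The main obstacle I anticipate is precisely this reconciliation of the noise assumptions: the sufficiency half is proven under the broad mixing condition (A3), whereas the impossibility half of Theorem \ref{nece1} leans on the quantitative two-sided variance bound (A4), yet the statement quantifies over $\mathscr{W}$, i.e. over (A3) alone. Everything else is assembly and bookkeeping; the crux is verifying that (A3) and (A4) share a common witness (the i.i.d. noise above) so that the necessity construction genuinely falls inside the class $\mathscr{W}$ against which the "only if" claim is measured, and that the control extracted in the "if" direction is uniform over the class parameters rather than tailored to a single sequence.
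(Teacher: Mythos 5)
Your proposal is correct and takes essentially the same route as the paper, whose entire proof is to combine Theorem \ref{suff1} (sufficiency) with Theorem \ref{nece1} (necessity), compressing the noise-class bridge into the single remark that any noise satisfying (A4) must satisfy (A3). Your two refinements --- checking that the gain constructed in Theorem \ref{suff1} depends only on the class constants $(\delta, c, n, a_{\max})$ so that one control works uniformly, and exhibiting an explicit i.i.d.\ witness noise lying in both the (A3) and (A4) classes --- are precisely the details the paper's one-line proof leaves implicit, and your witness construction is, if anything, more careful than the paper's containment claim, since pairwise uncorrelatedness alone need not imply the $\widetilde{\rho}$-mixing property.
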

\begin{proof}
This follows immediately from Theorems \ref{suff1} and \ref{nece1},
since any noise satisfying (A4) must satisfy (A3).
\end{proof}

\begin{remark}\label{rem_A2_a}
The balancedness of network topologies can guarantee that the expectation of the final consensus value is equal to the average value of the initial states $\frac{1}{n}\sum_{i=1}^n x_i(1)$.
In addition, by (\ref{the_1_20}), the variance of the consensus value can be arbitrarily small if we choose $t^*$ to be large enough.
Overall, we can control the final consensus value to be arbitrarily close to the average value $\frac{1}{n}\sum_{i=1}^n x_i(1)$.
\end{remark}


\begin{remark}\label{rem_A2}
Without  assumption (A2), Theorem \ref{ce_1} should still hold if one replaces  \emph{unbiased mean square average-consensus} by  \emph{mean square consensus}.
However, its proof is quite difficult because it is related
to a well-known conjecture in the field of probability that the convergence rate of a general inhomogeneous Markov chain is a negative exponential function. This conjecture was formulated as Problem 1.1 in \cite{Saloff2009}.
We remark that the reference \cite{Huang2012} obtained the convergence under
the uniformly joint-connectivity by the classical infinitesimal analysis which cannot be used to obtain convergence rates or analyze the critical connectivity condition.
Currently, almost all papers concerning convergence speeds of the distributed consensus protocol with time-varying topologies assume that the topologies are undirected, or balanced, or have a common stationary distribution \cite{Saber2004,Zhou2009,Huang2010,Li2009,Li2010,Touri2009,Touri2011,Patterson2010,Yin2013}.
\end{remark}

\section{Fastest Convergence Rates of Consensus}
\label{Convergence_speed}

This section establishes bounds on
the fastest convergence  rate  to the unbiased mean square average-consensus among all gain functions
under unknown switching topologies.
Different from the noise-free systems \cite{Xiao2004,Kim2006,Olshevsky2011},
$x^*$ in Definition \ref{consensus_def} is a random variable whose value is uncertain. Also, if system (\ref{model0})-(\ref{model1}) reaches consensus in mean square, it must be true that
 $\lim_{t\rightarrow\infty}E[V(x(t))]=0$, so we use $E[V(x(t))]$ to  measure the convergence rate to consensus instead of $E\|x(t)-x^*\mathds{1}\|^2$.
In this paper the \emph{fastest convergence rate} of consensus at time $t$ is the minimal value of $E[V(x(t))]$ among all controls
$a(1)\geq 0, a(2)\geq 0, \ldots, a(t-1)\geq 0$ for a topology sequence $\mathcal{G}(1), \ldots, \mathcal{G}(t-1)$.
This rate depends on the time-varying topologies, however our protocol assumes each node only knows its local information
and the global topology information is unknown. As a result, its exact value cannot be obtained.
A simplified notion of convergence rate will be first defined. Let
 \begin{eqnarray}\label{rho1}
\rho_1(t):=\inf_{a(1)\geq 0,\ldots,a(t-1)\geq 0}\inf_{\mathcal{G}(1),\ldots,\mathcal{G}(t-1)} E[V(x(t))]
 \end{eqnarray}
 be the fastest convergence rate under the best topologies.
 Here we recall that $\{\mathcal{G}(t)\}_{t\geq 1}=\{(\mathcal{V},\mathcal{E}(t),\mathcal{A}(t))\}_{t\geq 1}$ is the topology sequence and note that $\rho_1(t)$ depends on  the noises and the initial state $x(1)$.

 We also consider the fastest convergence rate under the worst topologies.
Define $\mathscr{G}_{\delta,c}$
as the set of topology sequences satisfying (A1)-(A2), where $\delta,c,$ are the constants appearing in (A1).
Let
 \begin{eqnarray}\label{rho2}
\rho_2(t):=\inf_{a(1)\geq 0,\ldots,a(t-1)\geq 0}\sup_{\{\mathcal{G}(k)\}\in \mathscr{G}_{\delta,c}}E[V(x(t))]
 \end{eqnarray}
denote the fastest convergence rate with respect to the worst topologies satisfying (A1)-(A2).
We note that $\rho_2(t)$ depends on $\delta,c$, the noises and the initial state.

By the definitions of $\rho_1(t)$ and $\rho_2(t)$ we have for any topology sequences satisfying (A1)-(A2), its corresponding fastest convergence rate will be neither faster than $\rho_1(t)$ nor slower than
$\rho_2(t)$ provided that the initial state and noises are same.
Theorem \ref{suff1} gives a upper bound for $\rho_2(t)$, and in the following subsection we will consider
the lower bounds for $\rho_1(t)$ and $\rho_2(t)$.

\subsection{Lower Bounds}\label{lower bounds}

In this subsection we will give lower bounds on $\rho_1(t)$ and $\rho_2(t)$, respectively under (A4). The lower bounds on the fastest convergence rate indicate that for any control the convergence rate will not be faster than them.
Before the estimation of $\rho_1(t)$  we need introduce the following lemma:

\begin{lemma}\label{Lemma_lower}
Let $L\in\mathds{R}^{n\times n}$ be the Laplacian matrix of any weighted directed graph.
Then for any $x\in\mathds{R}^n$ and constant $a>0$ we have
$V[(I-aL)x]\geq \left(1-a\lambda_{\max}(L+L')\right) V(x),$
where $\lambda_{\max}(\cdot)$ denotes the largest eigenvalue.
\end{lemma}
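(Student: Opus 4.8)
The plan is to reduce everything to the orthogonal projection onto the hyperplane $\mathds{1}^\perp$. Let $P := I - \frac{1}{n}\mathds{1}\mathds{1}'$, which is symmetric and idempotent ($P = P'$, $P^2 = P$) and satisfies $P\mathds{1} = 0$. The first observation is that $V(x) = \|x - x_{\rm{ave}}\mathds{1}\|^2 = \|Px\|^2 = x'Px$ for every $x$, since $x - x_{\rm{ave}}\mathds{1} = Px$. This reformulation is precisely what lets me sidestep the fact that the graph is directed and possibly unbalanced.

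The crucial structural fact I would use is that $L$ is a Laplacian, so its row sums vanish, i.e. $L\mathds{1} = 0$; note that I do \emph{not} assume $\mathds{1}'L = 0$, since the graph need not be balanced. Writing $x = x_{\rm{ave}}\mathds{1} + \tilde{x}$ with $\tilde{x} := Px \in \mathds{1}^\perp$, and using $L\mathds{1}=0$, I get $(I-aL)x = x_{\rm{ave}}\mathds{1} + (I-aL)\tilde{x}$. Applying $P$ and using $P\mathds{1}=0$ gives $P(I-aL)x = P(I-aL)\tilde{x}$, so that $V[(I-aL)x] = \|P(I-aL)\tilde{x}\|^2 = \tilde{x}'(I-aL)'P(I-aL)\tilde{x}$.

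Next I would expand this quadratic form and simplify using $P\tilde{x} = \tilde{x}$ (because $\tilde{x}\in\mathds{1}^\perp$) together with $P^2 = P$ and $P'=P$. The leading term gives $\tilde{x}'P\tilde{x} = \|\tilde{x}\|^2 = V(x)$; the two cross terms combine into $-a\,\tilde{x}'(L+L')\tilde{x}$; and the remaining term is $a^2\tilde{x}'L'PL\tilde{x} = a^2\|PL\tilde{x}\|^2 \geq 0$. Hence
\begin{eqnarray*}
V[(I-aL)x] = V(x) - a\,\tilde{x}'(L+L')\tilde{x} + a^2\|PL\tilde{x}\|^2 \geq V(x) - a\,\tilde{x}'(L+L')\tilde{x}.
\end{eqnarray*}
Finally, since $L+L'$ is symmetric, the Rayleigh-quotient bound yields $\tilde{x}'(L+L')\tilde{x} \leq \lambda_{\max}(L+L')\|\tilde{x}\|^2 = \lambda_{\max}(L+L')V(x)$, and substituting gives $V[(I-aL)x] \geq (1 - a\lambda_{\max}(L+L'))V(x)$, as claimed.

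The one point requiring care — and the main conceptual obstacle — is that the graph is directed and not assumed balanced, so $(I-aL)$ does not preserve the coordinate average and one cannot argue that the centering is invariant. The projection identity $V(x) = x'Px$ circumvents this entirely: the only property of $L$ actually needed is the right null-vector relation $L\mathds{1}=0$, valid for every Laplacian, while the cross term automatically symmetrizes to $L+L'$ and the leftover $a^2$ term is manifestly nonnegative, so it can simply be discarded.
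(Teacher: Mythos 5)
Your proof is correct and follows essentially the same route as the paper's: the paper also uses $L\mathds{1}=0$ to reduce to the centered vector $\widetilde{x}$, expands the squared norm of $\widetilde{x}-a\bigl[L\widetilde{x}-(\pi L\widetilde{x})\mathds{1}\bigr]$ (which is exactly your $P(I-aL)\widetilde{x}$), drops the nonnegative $a^2$ term, cancels the centering terms via $\widetilde{x}'\mathds{1}=0$, and finishes with the Rayleigh-quotient bound on $L+L'$. Your projection matrix $P$ is just an explicit notation for the paper's centering operation, so the two arguments are the same proof in different dress.
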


The proof of this lemma  is
in Appendix A.

The following theorem gives a lower bound of $\rho_1(t)$.
\begin{theorem}\label{crit_anytop}
Suppose that the noises satisfy (A4).
Then for any
non-consensus initial state, under protocol (\ref{model0})-(\ref{model1}) there exists a constant $c'>0$ such that
$\rho_1(t)\geq c'/t$ for all $t\geq 1$.
\end{theorem}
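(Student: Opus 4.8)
The plan is to lower-bound $E[V(x(t))]$ for \emph{every} admissible gain sequence $a(1),\ldots,a(t-1)$ and \emph{every} topology sequence $\mathcal{G}(1),\ldots,\mathcal{G}(t-1)$, and then take the infimum. I would start from the decomposition already derived in the proof of Theorem \ref{suff1}: writing $Y(i)=a(i)[\Phi(t,i+1)\widehat{w}(i)-(\pi\widehat{w}(i))\mathds{1}]$, orthogonality of the noise across time (part (i) of (A4)) gives
\begin{eqnarray*}
E[V(x(t+1))]=V(\Phi(t,1)x(1))+\sum_{i=1}^{t}E\|Y(i)\|^2.
\end{eqnarray*}
Since the two terms are nonnegative, it suffices to bound either one from below; the noise term is the natural candidate because it does not vanish and is controlled below by (A4). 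In particular, the last summand $E\|Y(t)\|^2=a^2(t)\,E[V(\widehat{w}(t))]$ involves no products of Laplacians, and by part (ii) of (A4) together with the lower weight bound $a_{ij}^t\geq 1$ in (\ref{a_assum}) one gets $E[V(\widehat{w}(t))]\geq \underline{v}\cdot(\text{const})$ whenever $\mathcal{E}(t)$ is non-empty. Thus large gains near the final time are \emph{penalized} by the noise term.

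The crux is that small gains cannot help either, because then the \emph{initial-state} term $V(\Phi(t,1)x(1))$ stays bounded away from zero. This is exactly where Lemma \ref{Lemma_lower} enters: applying it repeatedly to $z(s+1)=(I-a(s)L(s))z(s)$ with $z(1)=x(1)$ yields
\begin{eqnarray*}
V(\Phi(t,1)x(1))\geq V(x(1))\prod_{s=1}^{t-1}\bigl(1-a(s)\lambda_{\max}(L(s)+L(s)')\bigr),
\end{eqnarray*}
valid whenever the factors are nonnegative (when some factor is negative, $V(\Phi(t,1)x(1))\ge 0$ is used directly together with the noise term). Since $\lambda_{\max}(L(s)+L(s)')\leq 2d_{\max}$ is uniformly bounded by a constant depending only on $n$ and $a_{\max}$, and $1-u\geq e^{-2u}$ for small $u\geq 0$, the product is at least $V(x(1))\exp(-2d_{\max}\sum_{s}a(s))$. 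Hence if $\sum_{s=1}^{t-1}a(s)$ is of order smaller than $\log t$, the initial-state term alone is $\Omega(1)\gg 1/t$, and we are done.

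The remaining, main case is when $\sum_{s=1}^{t-1}a(s)$ is large, and here the argument should be balanced against the noise term. The key inequality I would aim for is a \emph{lower} bound of the form $\sum_{i=1}^{t}E\|Y(i)\|^2\geq c''\sum_{i=1}^{t}a^2(i)$ — at least $a^2(t)E[V(\widehat w(t))]$ from the final term suffices to control large individual gains, but to handle the full range one combines this with the two competing effects via Cauchy–Schwarz: by $(\sum_{s=1}^{t-1}a(s))^2\le (t-1)\sum_{s=1}^{t-1}a^2(s)$, a large gain sum forces a large $\sum a^2(s)$ of order at least $(\sum a(s))^2/t$, which feeds back into the noise term and produces a lower bound of order $1/t$. \textbf{The main obstacle} is making this trade-off simultaneous and uniform over all gain sequences: one must show that no allocation of the $a(s)$ can make \emph{both} the initial-state term and the noise term simultaneously $o(1/t)$, which amounts to minimizing $V(x(1))e^{-2d_{\max}S}+c''S^2/t$ over the total gain $S=\sum_s a(s)\ge 0$ and checking the minimum is $\Theta(1/t)$. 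I expect the optimizing $S$ to be of order $\tfrac12\log t$, giving the balance $e^{-2d_{\max}S}\sim S^2/t$ and the claimed rate $c'/t$.
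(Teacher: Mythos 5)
Your starting decomposition is sound (it is essentially the paper's own identity (\ref{cap_3_0}), valid under (A4)(i) since cross terms vanish), but both pillars you erect on it fail, and for the same reason: \emph{later contraction steps annihilate not only the initial-state term but also all previously injected noise}. Concretely, your key inequality $\sum_{i=1}^{t}E\|Y(i)\|^2\geq c''\sum_{i=1}^{t}a^2(i)$ is false. Since $E\|Y(i)\|^2=a^2(i)\,E[V(\Phi(t,i+1)\widehat{w}(i))]$, take the complete graph with unit weights and gain $a(j)=1/n$ at $k$ distinct times: then $I-\tfrac{1}{n}L=\tfrac{1}{n}\mathds{1}\mathds{1}'$ is the projection onto the consensus line, so $V(\Phi(t,i+1)\widehat{w}(i))=0$ for every $i$ preceding the last such time. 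Hence $\sum_i E\|Y(i)\|^2$ stays bounded while $\sum_i a^2(i)\geq k/n^2$ grows without bound, so no constant $c''$ exists, and your Cauchy--Schwarz step (large $S=\sum_s a(s)$ forcing $\sum_s a^2(s)\geq S^2/t$, which is then supposed to "feed back into the noise term") has nothing to feed into. The same example breaks your small-$S$ branch: a single complete-graph step with $a(j)=1/n$ gives $S=1/n\ll\log t$ yet $V(\Phi(t,1)x(1))=0$, because the factor $1-a(j)\lambda_{\max}(L+L')=1-2$ is negative; and your declared fallback for negative factors (use $V\geq 0$ "together with the noise term") has only the final-step noise $a^2(t)E[V(\widehat{w}(t))]$ concretely available, which is $0$ if $a(t)=0$. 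So in this scenario your case analysis certifies nothing, even though the true value $E[V(x(t+1))]=\tfrac{1}{n^2}E[V(\widehat{w}(j))]$ is indeed $\Omega(1)$. The underlying structural problem is that your trade-off is parametrized by the single scalar $S$, whereas the distribution of the gains, not just their sum, determines both surviving terms.

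What is missing is a bookkeeping device that keeps the surviving initial-state energy and the surviving noise energy \emph{complementary}, uniformly over all gain allocations. The paper achieves this in three moves: (i) let $t^*$ be the time just after the \emph{last} big gain $a\geq 1/c_2$, where $c_2:=4(n-1)a_{\max}$ bounds $\lambda_{\max}(L+L')$ via Gershgorin; the noise injected at $t^*-1$ alone forces $E[V(x(t^*))]\geq c_1/c_2^2$, which disposes of all negative-factor issues at once; (ii) after $t^*$ every factor $1-c_2a(j)$ is positive (so Lemma \ref{Lemma_lower} applies), and the "tiny" gains $a(j)\leq\tfrac{1}{c_2t}$ can shrink $V$ by at most the total factor $(1-1/t)^t\geq 1/e$; (iii) for the remaining "medium" gains, the identity $\sum_i b_i\prod_{j>i}(1-b_j)=1-\prod_i(1-b_i)$, applied with $b_j=c_2a(j)I_{\{a(j)>1/(c_2t)\}}$ together with $a^2(i)\geq\tfrac{a(i)}{c_2t}I_{\{a(i)>1/(c_2t)\}}$, shows the surviving noise is at least $\tfrac{c_1}{ec_2^2t}(1-z_t)$ while the surviving initial term is at least $\tfrac{1}{e}E[V(x(t^*))]\,z_t$, where $z_t=\prod_{j\geq t^*}(1-b_j)$. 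Their sum is then at least $\min\{\tfrac{1}{e}E[V(x(t^*))],\,\tfrac{c_1}{ec_2^2t}\}=\Omega(1/t)$ no matter how the gains are distributed: the same product $z_t$ that measures how much of the initial state survives also measures, through the telescoping identity, how much noise must survive. Without an argument of this type (or an equivalent), your two bounds cannot be made simultaneous, and the proof does not go through.
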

\begin{proof}
For any $t>1$,  we only need to consider the case of
 $\mathcal{E}(k)$ is not empty for all $1\leq k\leq t$, since if $\mathcal{E}(k)$ is empty then $x(k+1)=x(k)$, which results in the waste of the time step.

First, because $\widehat{w}_i(k)=\sum_{j\in\mathcal{N}_i(k)} a_{ij}^k w_{ji}(k)$ with
$a_{ij}^k\geq 1$, by (A4) there exists a constant $c_1>0$ such that
\begin{eqnarray}\label{cap_1}
\begin{aligned}
E\left[V\left(\widehat{w}(k)\right)\right]\geq c_1.
\end{aligned}
\end{eqnarray}
Also, by Gershgorin's circle theorem we have
\begin{eqnarray}\label{cap_3_1}
\begin{aligned}
&\lambda_{\max}(L(k)+L'(k))\\
&~\leq \max_{1\leq i\leq n}\Big(2 L_{ii}(k)+ \sum_{j\neq i} |L_{ji}(k)+L_{ij}(k)| \Big)\\
&~\leq 4(n-1)a_{\max}:=c_2,~~~~\forall k\geq 1.
\end{aligned}
\end{eqnarray}
 Let $t^*\in [1,t+1]$ be the minimum time such that if $k\geq t^*$ then $a(k)<1/c_2$.
By (\ref{the_1_2_2}) and (A4) it can be computed that
\begin{eqnarray}\label{cap_3_0}
\begin{aligned}
&E\left[V(x(t+1))\right]=E\left[V(\Phi(t,t^*)x(t^*))\right]\\
&~~+\sum_{i=t^*}^{t}a^2(i)E\left[V\left(\Phi(t,i+1)\widehat{w}(i)\right)\right].
\end{aligned}
\end{eqnarray}

If $t^*=t+1$, we have $a(t)\geq  1/c_2$. Then by (\ref{cap_3_0}) and (\ref{cap_1}),
\begin{eqnarray*}\label{cap_2}
\begin{aligned}
E\left[V(x(t+1))\right]&\geq a^2(t)E\left[V\left(\widehat{w}(t)\right)\right]\geq c_1/c_2^2
\end{aligned}
\end{eqnarray*}
and the result follows.

Hence,
we only need to consider the case  $t^*\leq t$.
By (\ref{cap_3_1}) and repeatedly using Lemma \ref{Lemma_lower} we have
\begin{eqnarray*}
\begin{aligned}
E\left[V(\Phi(t,t^*)x)\right]\geq  E\left[V(x)\right] \prod_{j=t^*}^t\left(1-c_2 a(j) \right).
\end{aligned}
\end{eqnarray*}
Taking this into (\ref{cap_3_0}) yields
\begin{eqnarray}\label{cap_3}
\begin{aligned}
&E\left[V(x(t+1))\right]\geq E\left[V(x(t^*))\right] \prod_{j=t^*}^t\left(1-c_2 a(j) \right)\\
&~~+\sum_{i=t^*}^{t}a^2(i)E\left[V\left(\widehat{w}(i)\right)\right]\prod_{j=i+1}^t\left(1-c_2 a(j) \right).\\
\end{aligned}
\end{eqnarray}
Let $I_{\{\cdot\}}$ be the indicator function. Since
\begin{eqnarray*}\label{cap_4}
\begin{aligned}
1-c_2 a(j)&=\left[1-c_2a(j) I_{\{a(j)>\frac{1}{c_2 t}\}}\right]\\
&~~\cdot \left[1-c_2 a(j) I_{\{a(j)\leq \frac{1}{c_2 t}\}}\right],
\end{aligned}
\end{eqnarray*}
and
\begin{eqnarray*}\label{cap_5}
\begin{aligned}
\prod_{j=1}^t\left(1-c_2 a(j) I_{\{a(j)\leq \frac{1}{c_2 t}\}} \right)\geq \left(1-\frac{1}{t}\right)^t\geq \frac{1}{e},
\end{aligned}
\end{eqnarray*}
 from (\ref{cap_3}) and (\ref{cap_1}), we
 obtain
\begin{eqnarray}\label{cap_6}
\begin{aligned}
&E\left[V(x(t+1))\right]\\
&\geq \frac{1}{e}E\left[V(x(t^*))\right] \prod_{j=t^*}^t\left(1-c_2 a(j) I_{\{a(j)> \frac{1}{c_2 t}\}}\right)\\
&\ \ +\frac{c_1}{e}\sum_{i=t^*}^{t}a^2(i)\prod_{j=i+1}^t\left(1-c_2 a(j)I_{\{a(j)>\frac{1}{c_2 t}\}} \right).
\end{aligned}
\end{eqnarray}

It remains to discuss the value of the right side of (\ref{cap_6}). We first consider $E[V(x(t^*))]$.
If $t^*=1$ then $E[V(x(t^*))]=V(x(1))$. Otherwise, by the definition of $t^*$ we have $a(t^*-1)\geq 1/c_2$.
As a result, similar to (\ref{cap_3_0}) and by (\ref{cap_1}) we have
$$E\left[V(x(t^*))\right]\geq a^2(t^*-1)E\left[V\left(\widehat{w}(t^*-1)\right)\right]\geq c_1/c_2^2.$$
These lead to
\begin{eqnarray}\label{cap_7}
\begin{aligned}
E\left[V(x(t^*))\right]\geq \min\left\{V(x(1)), c_1/c_2^2  \right\}.
\end{aligned}
\end{eqnarray}
Also, from $y_1\geq y_2 I_{\{y_1>y_2\}}$ for any $y_1,y_2\geq 0$, we get
\begin{eqnarray}\label{cap_8}
&&\sum_{i=t^*}^{t}a^2(i)\prod_{j=i+1}^t\left(1-c_2 a(j)I_{\{a(j)>\frac{1}{c_2 t}\}} \right)\nonumber\\
&& \geq \sum_{i=t^*}^{t}\frac{a(i)}{c_2 t}I_{\{a(i)>\frac{1}{c_2 t}\}}\prod_{j=i+1}^t\left(1-c_2 a(j)I_{\{a(j)>\frac{1}{c_2 t}\}} \right)\nonumber\\
&&=\frac{1}{c_2^2t}\bigg(1-\prod_{j=t^*}^t\left(1-c_2 a(j) I_{\{a(j)>\frac{1}{c_2 t}\}} \right)   \bigg),
\end{eqnarray}
where the last line uses the classical equality
\begin{eqnarray*}
\sum_{i=1}^{t} b_i \prod_{j=i+1}^{t} (1-b_j)=1-\prod_{i=1}^{t} (1-b_i), \forall b_i\in\mathds{R}, i\geq 1,
\end{eqnarray*}
which can be obtained by induction. Here we recall that $\prod_{j=i}^{t}(\cdot):=1$ if $t<i$.
Take $z_t=\prod_{j=t^*}^t (1-c_2 a(j) I_{\{a(j)>\frac{1}{c_2 t}\}} )$.
By substituting (\ref{cap_8}) into (\ref{cap_6}) we have
\begin{eqnarray*}\label{cap_9}
\begin{aligned}
E\left[V(x(t+1))\right]&\geq\frac{1}{e}E\left[V(x(t^*))\right]z_t+\frac{c_1}{e c_2^2 t}(1-z_t)\\
&\geq \min\left\{\frac{1}{e}E\left[V(x(t^*))\right], \frac{c_1}{e c_2^2 t}\right\}.
\end{aligned}
\end{eqnarray*}
From this and (\ref{cap_7}) our result is obtained.
\end{proof}

For $\rho_2(t)$ we get the following lower bound, whose
proof
is in Appendix \ref{App_crit2}.

\begin{theorem}\label{crit2}
Assume (A1) is satisfied with $\delta<1/2$, and (A2) and (A4) hold.
Then for any inconsistent initial state, under protocol
(\ref{model0})-(\ref{model1}) there exists a constant $c'\in(0,1)$ such that
$\rho_2(t)\geq  c'/t^{1-2\delta}.$
\end{theorem}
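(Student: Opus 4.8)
The plan is to establish the lower bound by constructing a \emph{specific adversarial topology sequence} in $\mathscr{G}_{\delta,c}$ and showing that no gain sequence can drive $E[V(x(t))]$ below the order $1/t^{1-2\delta}$ against it; since $\rho_2(t)$ takes the supremum over topologies before the infimum is beaten, exhibiting one bad sequence suffices. The natural candidate mirrors the construction in Theorem~\ref{nece1}: partition time by $t_k=t_{k-1}+c\lfloor t_{k-1}^{\delta}\rfloor$, and on each block alternate between the well-connected graph $\mathcal{G}_1$ (large spectral gap, Laplacian $L_1=nI-\mathds{1}\mathds{1}'$) and the barely-connected graph $\mathcal{G}_2$ (a single edge, Laplacian $L_2$), choosing the assignment adaptively so that the gain $a(t)$ is forced to act through whichever graph is \emph{least} helpful. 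Proposition~\ref{prop_matrix} diagonalizes $L_1$ and $L_2$ simultaneously via $P$, reducing the vector recursion (\ref{model3}) to a collection of scalar recursions along the eigen-directions $v_2,\ldots,v_n$, which is the technical engine that makes the error explicitly computable.

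The key steps, in order, are as follows. First, I would use Lemma~\ref{Lemma_lower} together with the decomposition (\ref{the_1_2_2}) to write $E[V(x(t+1))]$ as a sum of a propagated-initial-error term and an accumulated-noise term, exactly as in the proof of Theorem~\ref{crit_anytop}, but now retaining the block structure imposed by (A1). Second, along each eigen-direction the contraction per step is $\prod_j(1-a(j)\lambda_j)$ where $\lambda_j\in\{0,2,n\}$ depending on the graph active at step $j$; the adversary's choice forces at least one coordinate to contract slowly whenever $a(j)$ is small and to inject noise of variance $\Theta(a^2(j))$ whenever $a(j)$ is large, so that there is an irreducible tension. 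Third, I would carry out the same truncation trick $a(j)=a(j)I_{\{a(j)>1/(c_2 t)\}}+a(j)I_{\{a(j)\le 1/(c_2 t)\}}$ used in (\ref{cap_6}), bounding the product of the small-gain factors below by a constant via $(1-1/t)^t\ge 1/e$, and applying the telescoping identity $\sum_i b_i\prod_{j>i}(1-b_j)=1-\prod_i(1-b_i)$ to convert the noise sum into a single clean expression. The crucial new ingredient beyond Theorem~\ref{crit_anytop} is that the block lengths grow like $t_k^{\delta}$, so the per-block contraction budget is spread over $\Theta(t^{\delta})$ steps; balancing the noise injected over a block against the contraction achievable across it yields the exponent $1-2\delta$ rather than $1$.

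The main obstacle I anticipate is the block-level optimization: unlike Theorem~\ref{crit_anytop}, where a single effective threshold $1/(c_2 t)$ suffices, here the adversary must exploit the extensible structure so that whatever gains the controller chooses within a block of length $\Theta(t_k^{\delta})$, the product of contractions over that block cannot beat $1-\Theta(t_k^{-(1-\delta)})$ while the injected noise is at least $\Theta(t_k^{-2(1-\delta)})\cdot t_k^{\delta}=\Theta(t_k^{-(2-3\delta)})$ per block; summing these contributions with the correct per-block discounting and verifying that the dominant balance produces $\Theta(1/t^{1-2\delta})$ is where the careful bookkeeping lives. Concretely, I expect to need a lower bound on $\sum_{i} a^2(i)\prod_{j>i}(1-c_2 a(j))$ that is uniform over all admissible gain sequences, obtained by the same convexity/worst-case argument that in Theorem~\ref{crit_anytop} produced the constant $c_1/(e c_2^2 t)$; here the analogous quantity will carry an extra factor $t^{2\delta}$ coming from the slower aggregate contraction forced by the growing blocks, and confirming that this factor is tight (matching the upper bound $O(1/t^{1-2\delta})$ from Theorem~\ref{suff1}) is the heart of the estimate. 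The restriction $\delta<1/2$ guarantees $1-2\delta>0$ so the bound is genuinely a positive-rate lower bound rather than a statement about non-convergence.
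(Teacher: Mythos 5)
Your proposal follows essentially the same route as the paper's proof in Appendix \ref{App_crit2}: the same adversarial block construction $t_k=t_{k-1}+c\lfloor t_{k-1}^{\delta}\rfloor$ with $\mathcal{G}_1$ placed at the block-minimal gain and $\mathcal{G}_2$ elsewhere, the simultaneous diagonalization of Proposition \ref{prop_matrix}, the decomposition into propagated initial error plus accumulated noise, and the indicator-truncation plus telescoping identity with a block-adapted threshold (the paper uses $t^{2\delta-1}/\lfloor c t_j^{\delta}\rfloor$ on the block-minimal gains $a(t_j^*)$), leading to exactly the dichotomy you describe between weak contraction (initial error survives) and strong gains (noise of order $1/t^{1-2\delta}$ accumulates). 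Your per-block balance heuristic is also consistent with the paper's bookkeeping, so the plan is sound and matches the published argument.
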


\subsection{Fastest Convergence Rates and Sub-optimal Open-loop Control}

The convergence speed is one of the most important performances of
distributed consensus algorithms for networked systems.
Most existing work
focuses on noise-free algorithms  \cite{Saber2004,Xiao2004,Olshevsky2011,Angeli2008,Angeli2009,Hatano2005,Akar2008,Zhou2009}
where the control gains $\{a(t)\}$ are constant.
Among these,
some try to
maximize the convergence speed by optimizing weighted network topologies \cite{Xiao2004,Kim2006}.
There are some results considering convergence speed of distributed consensus algorithms with fixed topologies and additive noises
\cite{Yin2011,Yin2013,Wang2013}. Nevertheless, it appears that our paper is the first to optimize the convergence rate of this type of protocols with time-varying network topologies and additive noises. It is noted that in our system each node only knows its own and neighbors' information and the network topologies cannot be real-time controlled.


In this paper the fastest convergence rate of consensus at time $t$ is the minimal value of $E[V(x(t))]$ among all the gain functions
$a(1)\geq 0, a(2)\geq 0, \ldots, a(t-1)\geq 0$ which are the only controllable variables.
 Recall that $\rho_1(t)$  defined by (\ref{rho1}) denotes the fastest convergence rate for the best topologies,
and $\rho_2(t)$ defined by (\ref{rho2}) denotes the fastest convergence rate for the worst topologies satisfying (A1)-(A2).
With the same noise sequence it is clear that $\rho_1(t)\leq \rho_2(t)$ from their definitions.

\begin{theorem}\label{rho1_est}
Suppose that the noise satisfies (A4) and the initial state is not in consensus, then $\rho_1(t)=\Theta\left(\frac{1}{t}\right)$ under system (\ref{model0})-(\ref{model1}).
\end{theorem}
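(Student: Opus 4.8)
The plan is to combine the lower bound already in hand with a matching constructive upper bound. Theorem~\ref{crit_anytop} gives $\rho_1(t)\geq c'/t$ for some $c'>0$ under (A4), so it remains only to show $\rho_1(t)=O(1/t)$; the two bounds together give $\rho_1(t)=\Theta(1/t)$. Since $\rho_1(t)$ is an infimum over all gain sequences \emph{and} all topology sequences, bounding it from above requires exhibiting only a single topology sequence together with a single gain sequence for which $E[V(x(t))]=O(1/t)$.

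First I would take every topology to be the complete graph $\mathcal{G}_1$ with unit weights, whose Laplacian is $L_1=nI-\mathds{1}\mathds{1}'$. By Proposition~\ref{prop_matrix}, $L_1$ annihilates $\mathds{1}$ and acts as $nI$ on its orthogonal complement; since $V(x)=\|x-(\pi x)\mathds{1}\|^2$ measures exactly that complementary component, one gets the exact scalar identity $V\big((I-aL_1)x\big)=(1-na)^2V(x)$, and hence $V(\Phi(t,i+1)y)=\prod_{l=i+1}^{t}(1-na(l))^2\,V(y)$ for every $y\in\mathds{R}^n$. I would then choose the stochastic-approximation gain $a(l)=\beta/(nl)$ with a fixed $\beta>1/2$, so that $1-na(l)=1-\beta/l\in[0,1)$.

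Next, exactly as in the derivation of (\ref{the_1_2_2})--(\ref{the_1_7}), I would decompose $E[V(x(t+1))]$; here condition (A4)(i) (noises at distinct times are uncorrelated) makes the cross terms vanish outright, so
\begin{eqnarray*}
E[V(x(t+1))]=V(\Phi(t,1)x(1))+\sum_{i=1}^{t}a^2(i)\,E\big[V(\Phi(t,i+1)\widehat{w}(i))\big].
\end{eqnarray*}
Using the scalar identity above together with the uniform bound $E[V(\widehat{w}(i))]\leq C$ (from (A4)(ii) and (\ref{a_assum})), this becomes
\begin{eqnarray*}
E[V(x(t+1))]\leq V(x(1))\prod_{l=1}^{t}(1-\beta/l)^2+C\sum_{i=1}^{t}a^2(i)\prod_{l=i+1}^{t}(1-\beta/l)^2.
\end{eqnarray*}

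Finally I would carry out the standard asymptotic estimate $\prod_{l=i+1}^{t}(1-\beta/l)=\Theta\big((i/t)^{\beta}\big)$, which reduces the sum to $\Theta\big(t^{-2\beta}\sum_{i=1}^{t}i^{2\beta-2}\big)$; for $\beta>1/2$ this is $\Theta(1/t)$, while the initial-condition term is $O(t^{-2\beta})=o(1/t)$. Hence $E[V(x(t))]=O(1/t)$ for this choice of topology and gain, so $\rho_1(t)=O(1/t)$, and with Theorem~\ref{crit_anytop} the claim $\rho_1(t)=\Theta(1/t)$ follows. The only delicate point is the product/sum asymptotics, i.e.\ controlling $\sum_{l=i+1}^{t}\log(1-\beta/l)=-\beta\log(t/i)+O(1)$ uniformly in $i$ and $t$; but this is a routine computation, and the conceptual content is simply that the complete graph realizes the optimal $1/t$ rate permitted by Theorem~\ref{crit_anytop}.
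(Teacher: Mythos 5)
Your proof is correct, and its upper-bound half takes a genuinely different route from the paper's. The paper disposes of the upper bound in one line: by definition $\rho_1(t)\leq\rho_2(t)$ with $\delta=0$, and Theorem \ref{suff1} applied with $\delta=0$ (together with the assertion that noise satisfying (A4) also satisfies (A3)) gives $\rho_2(t)=O(1/t)$; combining with Theorem \ref{crit_anytop} yields $\Theta(1/t)$. You instead exhibit an explicit witness --- the complete graph $\mathcal{G}_1$ at every time with gain $a(l)=\beta/(nl)$ --- and exploit the exact spectral identity $V\big((I-aL_1)x\big)=(1-na)^2V(x)$ to compute $E[V(x(t))]$ directly. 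This buys self-containedness: you never invoke the contraction machinery (Lemmas \ref{Lemma_3} and \ref{Lemma_rate1}) or the $\widetilde{\rho}$-mixing moment bound (Lemma \ref{Lemma_4}), and you sidestep the somewhat delicate claim that (A4) implies (A3), since (A4)(i) gives exact orthogonality of the noise contributions across time; indeed your decomposition of $E[V(x(t+1))]$ is the same identity the paper itself uses in (\ref{cap_3_0}) for the lower bound. What the paper's route buys is brevity and uniformity: Theorem \ref{suff1} covers all $\delta\leq 1/2$ at once, so the $\delta=0$ case comes for free. Two small caveats on your write-up: the claim $1-\beta/l\in[0,1)$ for all $l\geq 1$ forces $\beta\leq 1$, so you should fix $\beta\in(1/2,1]$ (any such choice works; $\beta=1$ even annihilates the initial-condition term exactly); and for the upper bound you only need the one-sided estimate $\prod_{l=i+1}^{t}(1-\beta/l)\leq\big((i+1)/(t+1)\big)^{\beta}$, which follows from $\log(1-x)\leq -x$, rather than the full two-sided $\Theta$ asymptotics you appeal to. The lower-bound half of your argument coincides with the paper's, as both simply cite Theorem \ref{crit_anytop}.
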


\begin{proof}
By the definitions of $\rho_1(t)$ and $\rho_2(t)$ we have $\rho_1(t)\leq \rho_2(t)$ with $\delta=0$.
By Theorem \ref{suff1}
we have $\rho_1(t)=O(\frac{1}{t})$. Combing this with Theorem \ref{crit_anytop} yields our result.
\end{proof}


\begin{remark}\label{rem_conject1}
We just evaluate the fastest convergence rate to the accurate order. In fact,
it is conjectured that  $\rho_1(t)=\frac{b_1}{t}(1+o(1))$ under (A4),  where $b_1$ is a constant depending on $n$, $a_{\max}$ and $v$ only.
\end{remark}

\begin{theorem}\label{rho2_est}
Suppose that the topology sequence  $\{\mathcal{G}(t)\}$ satisfies (A1)-(A2) with $\delta<\frac{1}{2}$, the noises satisfy (A4),  and the initial state is not in consensus.
Then under system (\ref{model0})-(\ref{model1}),\\
i) $\rho_2(t)=\Theta\left(\frac{1}{t^{1-2\delta}}\right)$.\\
ii) The unbiased mean square average-consensus will be reached with a rate  $O(\frac{1}{t^{1-2\delta}})$ by
 choosing $a(t)=\frac{\alpha}{t^{1-\delta}+t^*}$, where $\alpha$ and $t^*$ are two constants satisfying (\ref{crit3_1}).
\end{theorem}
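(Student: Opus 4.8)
The plan is to assemble both parts from results already in hand, since the substantive analysis resides in Theorems \ref{suff1} and \ref{crit2}; the work here is to fit them together and to verify one uniformity point. The first step I would take is to record that (A4) implies (A3): a noise sequence obeying the temporal-orthogonality and two-sided variance bounds of (A4) is zero-mean with uniformly bounded variance and, in particular, $\widetilde{\rho}$-mixing, so every conclusion of Theorem \ref{suff1} is available under (A4).

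For the upper bound in part (i), I would use the explicit open-loop gain $a(t)=\frac{\alpha}{t^{1-\delta}+t^*}$ with $\alpha,t^*$ as in (\ref{crit3_1}). The point to stress is uniformity over the admissible topology class: the constants $\alpha,t^*$ depend only on $n$, $a_{\max}$, and the constant $c$ of (A1), and the bound $E[V(x(t))]=O(1/t^{1-2\delta})$ furnished by Case I of Theorem \ref{suff1} holds for every sequence in $\mathscr{G}_{\delta,c}$ with a hidden constant that does not depend on the particular topology. Consequently, for this one control the supremum over $\{\mathcal{G}(k)\}\in\mathscr{G}_{\delta,c}$ in (\ref{rho2}) is $O(1/t^{1-2\delta})$, and taking the infimum over controls gives $\rho_2(t)=O(1/t^{1-2\delta})$.

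For the lower bound I would invoke Theorem \ref{crit2} directly: under (A1) with $\delta<1/2$, together with (A2) and (A4), and for any non-consensus initial state, it supplies a constant $c'\in(0,1)$ with $\rho_2(t)\geq c'/t^{1-2\delta}$. Matching this with the upper bound yields $\rho_2(t)=\Theta(1/t^{1-2\delta})$, which is part (i). Part (ii) is then immediate, being exactly the constructive content of Case I of Theorem \ref{suff1}, now applicable because (A4) implies (A3): the gain $a(t)=\frac{\alpha}{t^{1-\delta}+t^*}$ with $\alpha,t^*$ satisfying (\ref{crit3_1}) drives the system to unbiased mean square average-consensus at rate $O(1/t^{1-2\delta})$.

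The only genuine obstacle is the uniformity assertion underlying the upper bound, namely that the $O(1/t^{1-2\delta})$ estimate in the proof of Theorem \ref{suff1} is independent of the chosen admissible topology. Tracing that proof, the sole topology-dependent quantity entering the estimate is $E[V(\widehat{w}(i))]$, which by (A4) and the weight bound (\ref{a_assum}) is controlled by a constant depending only on $n$, $a_{\max}$, and the variance bound $v$; this secures the uniformity and lets the quantity defined in (\ref{rho2}) attain the claimed order.
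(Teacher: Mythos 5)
Your proposal is correct and follows essentially the same route as the paper's own proof: both parts are obtained by combining Theorem \ref{suff1} (for the upper bound and the explicit gain $a(t)=\frac{\alpha}{t^{1-\delta}+t^*}$) with Theorem \ref{crit2} (for the lower bound), after observing that any noise satisfying (A4) also satisfies (A3). Your extra verification that the $O(1/t^{1-2\delta})$ constant from Theorem \ref{suff1} is uniform over the class $\mathscr{G}_{\delta,c}$ --- needed because of the supremum in (\ref{rho2}), and traceable to the uniform bound on $E[V(\widehat{w}(i))]$ --- is a point the paper leaves implicit; it is a worthwhile clarification rather than a different approach.
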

\begin{proof}
Since any noise satisfying (A4) must satisfy (A3), (i) follows immediately from Theorems \ref{crit2} and \ref{suff1}, and (ii) follows immediately  from the proof of Theorem \ref{suff1}.
\end{proof}

From Theorems \ref{rho1_est} and \ref{rho2_est} we get the following corollary.

\begin{corollary}
Suppose that the noise satisfies (A4) and the initial state is not in consensus.
 Then for system (\ref{model0})-(\ref{model1}) with any balanced and uniformly jointly connected topology sequence,
 i) the fastest convergence rate $\inf_{a(1)\geq 0,\ldots,a(t-1)\geq 0} E[V(x(t))]$  is $\Theta\left(\frac{1}{t}\right)$;
ii)  the unbiased mean square average-consensus will be reached with a rate $\Theta\left(\frac{1}{t}\right)$ by choosing $a(t)=\frac{\alpha}{t+t^*}$, where $\alpha$ and $t^*$ are two constants satisfying (\ref{crit3_1}).
\end{corollary}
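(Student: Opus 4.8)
The plan is to recognize that, as noted in Subsection~\ref{EJC}, a uniformly jointly connected topology sequence is precisely one satisfying (A1) with extensible exponent $\delta=0$; together with balancedness (A2) this places it in the class $\mathscr{G}_{0,c}$ for a suitable $c$. Once this identification is made, both parts reduce to specializing the already-proved estimates on $\rho_1(t)$ and $\rho_2(t)$ to $\delta=0$ and invoking the sandwiching relation between a fixed topology's fastest rate and these two extremal quantities.

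For part~(i), I would first recall the relation established from the definitions of $\rho_1$ and $\rho_2$: for any topology sequence satisfying (A1)--(A2), with the same noises and initial state,
\begin{eqnarray*}
\rho_1(t)\leq \inf_{a(1)\geq 0,\ldots,a(t-1)\geq 0} E[V(x(t))]\leq \rho_2(t).
\end{eqnarray*}
By Theorem~\ref{rho1_est} we have $\rho_1(t)=\Theta(1/t)$, and by Theorem~\ref{rho2_est}(i) with $\delta=0$ we have $\rho_2(t)=\Theta(1/t^{1-2\cdot 0})=\Theta(1/t)$. Since both endpoints are of order $\Theta(1/t)$, the squeezed middle quantity---which is exactly the fastest convergence rate of the given uniformly jointly connected sequence---is also $\Theta(1/t)$.

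For part~(ii), the upper bound $E[V(x(t))]=O(1/t)$ follows directly from Theorem~\ref{rho2_est}(ii) by setting $\delta=0$, under which the prescribed gain $a(t)=\alpha/(t^{1-\delta}+t^*)$ reduces to $a(t)=\alpha/(t+t^*)$ with $\alpha,t^*$ satisfying (\ref{crit3_1}). For the matching lower bound, I note that this particular gain is one admissible choice of control, so $E[V(x(t))]\geq \inf_{a(1)\geq 0,\ldots,a(t-1)\geq 0}E[V(x(t))]$, and the right-hand side is bounded below by $\rho_1(t)\geq c'/t$ via Theorem~\ref{crit_anytop} (equivalently, by part~(i)). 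Combining the two bounds yields $\Theta(1/t)$. I do not anticipate a genuine obstacle, as this is a direct corollary of the preceding theorems; the only point requiring care is the legitimacy of the $\Theta$-lower bound in part~(ii), which rests on the fact that the infimum over all gains is itself bounded below by $c'/t$, so no single admissible gain can improve upon this order.
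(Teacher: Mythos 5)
Your proposal is correct and follows essentially the same route as the paper: sandwiching the fixed-sequence fastest rate $\inf_{a(1),\ldots,a(t-1)} E[V(x(t))]$ between $\rho_1(t)=\Theta(1/t)$ (Theorem \ref{rho1_est}) and $\rho_2(t)=\Theta(1/t)$ (Theorem \ref{rho2_est}(i) with $\delta=0$, after identifying balanced uniform joint-connectivity with (A1)--(A2) at $\delta=0$), and then obtaining part (ii) from Theorem \ref{rho2_est}(ii) together with the part-(i) lower bound. The only cosmetic difference is that you also cite Theorem \ref{crit_anytop} directly for the lower bound in (ii), which the paper subsumes into part (i); the argument is the same.
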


\begin{proof}
i)  Let $\rho_3(t):=\inf_{a(1)\geq 0,\ldots,a(t-1)\geq 0} E[V(x(t))]$ which depends on the topology sequence.
 First by (\ref{rho1}) and Theorem \ref{rho1_est} we can get $\rho_3(t)\geq\rho_1(t)=\Theta(\frac{1}{t})$. Also, because the balanced and uniformly jointly connected topology condition equals to the condition (A1)-(A2) with $\delta=0$, by (\ref{rho2}) and  Theorem \ref{rho2_est} i) we can get $\rho_3(t)\leq\rho_2(t)=\Theta(\frac{1}{t})$. Thus, we have $\rho_3(t)=\Theta(\frac{1}{t})$. \\
ii) It follows immediately  from i) and Theorem \ref{rho2_est} ii).
\end{proof}

\section{Consensus under Non Stationary and Strongly Correlated Random Topologies}\label{sec_stoc}
As mentioned in Remark \ref{rem_adv},  one advantage of the extensible joint-connectivity is that it can be used to analyze systems with random topologies compared to the uniform joint-connectivity condition. This is because with probability $1$ there exists a finite time $T>0$ such that  random topologies satisfy (A1) for all $t\geq T$, even if
the topology processes are not stationary and strongly correlated.
In this section we consider random network topologies $\{\mathcal{G}(t)\}_{t\geq 1}=\{(\mathcal{V},\mathcal{E}(t),\mathcal{A}(t))\}_{t\geq 1}$ satisfying:

\textbf{(A1')} There exist three constants $K\in\mathbb{Z}^+$,  $\mu\in(0,1/2)$ and $p>0$ such that
for any $t\geq 1$,
\begin{eqnarray*}
\begin{aligned}
&P\Big(\bigcup_{t'=t}^{t+K-1}\mathcal{G}(t') \mbox{ is strongly connected}\\
&~~~~~~|\forall \mathcal{G}(i), 1\leq i<t \Big) \geq p  t^{-\mu}\log  t.
\end{aligned}
\end{eqnarray*}

\begin{remark}
 Assumption (A1')
includes
a wide class of non-stationary and strongly correlated random matrix sequence $\{\mathcal{G}(t)\}$, including as special cases the
 ergodic and stationary Markov processes used in \cite{Huang2010,Tahbaz2010,Yin2011}.
\end{remark}

\begin{remark}
The constant $\mu$ in (A1')  essentially corresponds to the extensible exponent $\delta$ in (A1).
Also, in practical applications this constant would be converted into a certain parameter of practical systems.
For example, in mobile ad-hoc networks,  because the probability of successful communication between two agents depends on their distance,
(A1') can be translated into a limitation to the growth rate of the distance between agents, where $\mu$ corresponds to a coefficient
of this growth rate, see the following (\ref{App_9})-(\ref{App_10}).
\end{remark}


From Theorem \ref{suff1}, we obtain the following result for the case of random network topologies, whose
proof
is contained in Appendix \ref{App_rand_topo}.
An application of Theorem \ref{rand_topo} is provided in the following subsection.

\begin{theorem}\label{rand_topo}
Consider the system given by (\ref{model0})-(\ref{model1}) with random network topologies satisfying (A1') and (A2). Assume that the noise satisfies (A3). Then from any initial state the system will reach unbiased mean square average-consensus with the convergence rate $E[V(x(t))]=O(\frac{1}{t^{1-2\mu}})$ if one selects $a(t)=\frac{\alpha}{t^{1-\mu}+t^*}$, where $\alpha$ and $t^*$ are two constants satisfying (\ref{crit3_1}), and $\mu$ is the same constant appearing in (A1').
\end{theorem}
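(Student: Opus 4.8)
The plan is to deduce Theorem \ref{rand_topo} from the deterministic Theorem \ref{suff1} by proving that, with probability one, a realization of the random sequence $\{\mathcal{G}(t)\}$ satisfies the deterministic condition (A1) with $\delta=\mu$ from some a.s.\ finite random time $T$ onward; this is exactly the estimate (\ref{rand_topo_3_1}) announced in Remark \ref{rem_adv}. Since $\mu\in(0,1/2)$ and the prescribed gain $a(t)=\alpha/(t^{1-\mu}+t^{*})$ with $\alpha,t^{*}$ as in (\ref{crit3_1}) is precisely the gain used in Case~I of Theorem \ref{suff1} with $\delta=\mu$, the target rate $O(1/t^{1-2\mu})$ and the unbiasedness should then follow from the contraction estimates (\ref{crit3_4})--(\ref{crit3_7}), provided the random onset time $T$ can be handled in expectation.

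First I would realize the connectivity breakpoints as stopping times of the topology filtration $\mathcal{H}_{t}:=\sigma(\mathcal{G}(1),\dots,\mathcal{G}(t))$: set $t_{1}=1$ and let $t_{k}$ be the first time $s>t_{k-1}$ for which $\cup_{t_{k-1}\le t<s}\mathcal{G}(t)$ is strongly connected. Because a single connected window of length $K$ already makes the union connected, the gap $t_{k}-t_{k-1}$ is bounded by $K N_{k}$, where $N_{k}$ counts the consecutive disjoint length-$K$ windows started at $t_{k-1}$ before the first connected one. Restricting to the budget $t_{k}-t_{k-1}\le c\,t_{k-1}^{\mu}$, every such window starts in $[t_{k-1},2t_{k-1}]$ (as $\mu<1$), so by (A1') its conditional connectivity probability is at least $p\,(2t_{k-1})^{-\mu}\log t_{k-1}$. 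Chaining these conditional probabilities through the tower property over the $\lfloor (c/K)t_{k-1}^{\mu}\rfloor$ windows gives
\[
P\big(t_{k}-t_{k-1}>c\,t_{k-1}^{\mu}\,\big|\,\mathcal{H}_{t_{k-1}}\big)\le \exp\!\big(-\tfrac{cp}{K2^{\mu}}\log t_{k-1}\big)= t_{k-1}^{-cp/(K2^{\mu})}.
\]
Here the $\log t$ factor built into (A1') is exactly what turns the product of failure probabilities into a genuine negative power of $t_{k-1}$, and the freedom in the constant $c$ of (A1) lets me make the exponent $cp/(K2^{\mu})$ as large as I please.

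Since $t_{k-1}\ge (k-1)K$, the series $\sum_{k}t_{k-1}^{-cp/(K2^{\mu})}$ converges deterministically once $c$ is large, so the conditional Borel--Cantelli lemma shows that almost surely only finitely many gaps exceed their budget. Hence there is an a.s.\ finite $T$ such that the realized $\{t_{k}\}$ obeys $t_{k}\le t_{k-1}+c\,t_{k-1}^{\mu}$ whenever $t_{k-1}\ge T$, i.e.\ (A1) holds with $\delta=\mu$ from $T$ on; together with (A2) this places each realization in the setting of Theorem \ref{suff1}. Applying Lemma \ref{Lemma_3} and Lemma \ref{Lemma_rate1} to the tail of the product (the early factors before $T$ are each $\le 1$, hence only help) then yields a conditional bound $E[V(x(t))\mid\mathcal{H}_{\infty}]\le C(T)/t^{1-2\mu}$ for $t\ge T$, where $C(T)$ is polynomial in $T$.

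The hard part is the final passage to the unconditional estimate, because the open-loop gain is fixed in advance and cannot adapt to the random $T$, so both the constant $C(T)$ and the event $\{T>t\}$ must be controlled. Two facts make this work. First, since each $I-a(i)L(i)$ is a contraction for $V$ and $\sum_{i}a^{2}(i)<\infty$, Lemma \ref{Lemma_4} gives a crude but uniform bound $E[V(x(t))\mid\mathcal{H}_{\infty}]\le M$ for a deterministic constant $M$ and every $t$. Second, the tail estimate above yields $P(T>t)\lesssim t^{-(cp/(K2^{\mu})-1)}$, and $E[C(T)]<\infty$ once $c$ is chosen large, because $C(T)$ has fixed polynomial degree. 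Splitting over $\{T\le t\}$ and $\{T>t\}$ and using these two facts gives
\[
E[V(x(t))]\le \frac{E[C(T)]}{t^{1-2\mu}}+M\,P(T>t),
\]
so choosing $c$ large enough that $P(T>t)=O(t^{-(1-2\mu)})$ delivers $E[V(x(t))]=O(1/t^{1-2\mu})$. Finally, unbiased mean-square average-consensus follows exactly as in Theorem \ref{suff1}: (A2) still gives $\pi L(t)=0$, whence $Ex^{*}=\pi x(1)=\tfrac1n\sum_{i}x_{i}(1)$, while $\mathrm{Var}(x^{*})=O\!\big(\sum_{i}a^{2}(i)\big)<\infty$.
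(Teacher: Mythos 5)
Your proposal is correct in substance and rests on the same two pillars as the paper's own proof: chaining (A1') over consecutive length-$K$ windows so that the probability of a ``bad'' stretch becomes a negative power of time with exponent proportional to $c$, and then running the machinery of Theorem \ref{suff1} (Lemmas \ref{Lemma_3} and \ref{Lemma_rate1}) from a random onset onward. But your organization of the randomness is genuinely different. The paper never introduces stopping times or Borel--Cantelli: it fixes the deterministic grid $t_{k+1}=t_k+\lfloor c t_k^{\mu}\rfloor$ in advance, defines $k^*$ as the first grid index from which \emph{every} window $[t_k,t_{k+1})$ has a connected union, bounds $P(k^*=k)<t_k^{-cp/(2K)}$ as in (\ref{rand_topo_3_1}), and then simply expands $E[V(x(t))]=\sum_{k} P(k^*=k)\,E[V(x(t))\mid k^*=k]$ by total probability, using the conditional rate $O\bigl(t_k^{2(1-\mu)}/t^{2(1-\mu)}+1/t^{1-2\mu}\bigr)$ from (\ref{rand_topo_5}); since the grid is deterministic, (A1') applies verbatim to each window and no tail bound for a random clock time is ever needed. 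Your stopping-time route makes the almost-sure statement of Remark \ref{rem_adv} explicit, which the paper only asserts, but it obliges you to fill in three points you currently gloss over: (i) (A1') is stated for deterministic window starts, so chaining it at the random starts $t_{k-1}+(i-1)K$ requires noting that $\{t_{k-1}=s\}$ is measurable with respect to $\sigma(\mathcal{G}(1),\dots,\mathcal{G}(s-1))$, so one can decompose over the value of $t_{k-1}$; (ii) the tail bound $P(T>t)\lesssim t^{-(cp/(K2^{\mu})-1)}$ does not follow directly from the per-index estimates, because $T$ is a clock time rather than an index --- one must union separately over failures whose start exceeds (say) $\sqrt{t}$ and failures with early start but gap longer than $t-\sqrt{t}$, the latter being super-polynomially small by another application of (A1'); (iii) $E[C(T)]<\infty$ rests on that same tail bound. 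Also, your claim $t_{k-1}\geq (k-1)K$ is unjustified (a union over fewer than $K$ graphs, even a single graph, can already be strongly connected), though the trivial bound $t_{k-1}\geq k-1$ suffices everywhere you use it. None of these defects is fatal, and once repaired your argument is a valid, if somewhat longer, alternative; both routes also share the same implicit convention that conditioning on topology events leaves the noise assumption (A3) intact, and both finish the unbiasedness and variance claims identically via (\ref{the_1_19_b1})--(\ref{the_1_20}).
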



\subsection{Application to Mobile Ad-Hoc Networks}\label{subsec_app}

To investigate the distributed consensus protocol with random networked topologies,
the existing results assume that the topologies are either i.i.d. or stationary Markov processes\cite{Hatano2005,Porfiri2007,Akar2008,Zhou2009,Tahbaz2010,Huang2010,Yin2011}. This assumption fits stationary
wireless networks; but in mobile systems
network topologies will no longer be
stationary because communications between nodes depend on their distances.
Different from the previous work, Theorem \ref{rand_topo} treats non-stationary random network topologies and can be applied to distributed computation of
mobile networks.  For example,
a mobile wireless sensor network or a multi-robot system needs to compute the average value of some data (such as temperature, humidity, light intensity, pressure
etc.) measured by each node (or agent).
Assume that the data of each agent $i$ are encoded to a scalar $x_i(0)$.
We aim to design a distributed protocol to obtain the average value of $x_i(0)$.
Since communication packet delivery ratios between agents depend on their distances, we must take into consideration of agent movement.

We adopt the first-order average-consensus protocol as our communication protocol. Let $x_i(t)$ be the state of agent $i$ at time $t$ which is initially set to be $x_i(0)$. We assume that each agent periodically, with period $T>0$, broadcasts its current state to all other agents.
To reduce signal interference the agents are arranged to send information in different times.
This leads to the following communication protocol.

In each period $[lT,(l+1)T)$, $l\geq 0$, at the time $lT+\frac{(i-1)T}{n}$, $1\leq i\leq n$, agent $i$ broadcasts its current state $x_i(lT+\frac{(i-1)T}{n})=x_i(lT):=x_i^l$  to
all other agents. The more bits its sending package contains, the more difficult this package is to be successfully received by other agents.
Practical wireless systems only transfer a finite number of bits per transmission.
Thus, the broadcasting signal is equal to $x_i^l$ with a quantization noise $\xi_i^l$.
Each link
$(i,j)$ has a probability which depends the distance between agents $i$ and $j$ for successfully receiving the information
\begin{eqnarray}\label{App_2}
\begin{aligned}
R_j^{i,l}=x_i^l+\xi_i^l+\zeta_j^{i,l},
\end{aligned}
\end{eqnarray}
where $\zeta_j^{i,l}$ denotes the possible reception error of agent $j$ that the error detection code in its received package cannot detect.
After agent $j$ receives the information $R_j^{i,l}$, it sends an acknowledgment to agent $i$ immediately. When the response signal reaches agent $i$,
agent $i$ can know the response signal coming from agent $j$ according to its carrier frequency if each agent is assigned a distinct carrier frequency. This process does not have to decode the signal
so we can assume that the response can be indeed received by agent $i$.
Agent $i$ collects the response only in the interval $(lT+\frac{(i-1)T}{n},lT+\frac{iT}{n})$.
Because the above sending-reception-response process contains no retransmission, its total time is very short and then agent $i$ can receive all responses to itself in $(lT+\frac{(i-1)T}{n},lT+\frac{iT}{n})$ if we choose $T$ to be a suitable real number. This fact indicates that each agent can collect all responses to itself, and will not wrongly collect the responses to others. Thus, each agent actually knows who receives its sending state.
Define
\begin{eqnarray*}\label{App_3}
\begin{aligned}
\mathcal{N}_i^l:=\big\{j:&\mbox{ Agents } i \mbox{ and } j \mbox{ receive each other's state}\\
&~\mbox{in the time interval } [lT,(l+1)T)\big\}.
\end{aligned}
\end{eqnarray*}
to be the neighbor set of agent $i$. At the end of every period $[lT,(l+1)T)$ each agent updates its state by
\begin{eqnarray}\label{App_4}
&&x_i^{l+1}:=x_i((l+1)T)= x_i^{l}+a(l)\sum_{j\in\mathcal{N}_i^l}( R_i^{j,l}-x_i^l)\nonumber\\
&&= x_i^{l}+a(l)\sum_{j\in\mathcal{N}_i^l}( x_j^l+\xi_j^l+\zeta_i^{j,l}-x_i^l)
 \end{eqnarray}
for all $l\geq 0$ and $1\leq i\leq n$, where the last line uses (\ref{App_2}).

 Let $\mathcal{G}^l:=(\mathcal{V},\mathcal{E}^l)$ where $\mathcal{E}^l:=\{(i,j):j\in\mathcal{N}_i^l\}$. Then $\mathcal{G}^l$ is an undirected graph.
 Also, it is natural to assume that $\{\xi_j^l+\zeta_i^{j,l}\}$ is a zero-mean and bounded variance $\tilde{\rho}$-mixing sequence. Thus, according to
Theorem \ref{rand_topo}, if the topologies $\{\mathcal{G}^l\}_{l\geq 1}$ satisfy
 (A1'), we can use the open-loop control $a(l)$ such that system (\ref{App_4}) reaches unbiased mean square average-consensus.

 Next we consider the movement restriction guaranteeing that the topologies $\{\mathcal{G}^l\}_{l\geq 1}$ satisfy
 (A1').
According to the log-normal shadowing model\cite{Rap2002,Wang2010}, the probability of agent $i$ successfully receiving a data packet from agent $j$ can be approximated by
 \begin{eqnarray}\label{App_5}
\begin{aligned}
&P_i^j(t)=\frac{1}{2}+\frac{1}{\sqrt{\pi}}\int_0^{\frac{S_j(t)-L_{i}^j(t)-R_{\rm{th}}}{\sqrt{2}\delta}}e^{-x^2}dx,
\end{aligned}
 \end{eqnarray}
 where $S_j(t)$ is the transmission signal strength of agent $j$ at time $t$, $L_{i}^j(t)$ is the path loss
 between agents $i$ and $j$ at time $t$, $R_{\rm{th}}$ is a constant depending on the size of the data package,
 and $\delta$ is the standard deviation of a Gaussian random variable ($S_j(t)$, $R_{\rm{th}}$ and $\delta$ are measured in dBm, $L_{i}^j(t)$ is measured in dB).

 For simplicity, we assume that the transmission signal strength $S_j(t)=S_j$ is a constant, and the path loss is estimated by the free-space path loss (FSPL) which is
  \begin{eqnarray*}\label{App_6}
\begin{aligned}
L_{i}^j(t)=32.45+20\log d_{ij}(t)+20\log f_j,
\end{aligned}
 \end{eqnarray*}
 where $d_{ij}(t)$ is the distance between agents $i$ and $j$ at time $t$ measured in kilometer, and $f_j$ is the carrier frequency of agent $j$ measured in MHz.
 Take $\alpha_j=\frac{1}{\sqrt{2}\delta}(S_j-32.4-20\log f_j-R_{\rm{th}})$ and $\beta=10\sqrt{2}/\delta$. Then equation (\ref{App_5}) can be rewritten as
  \begin{eqnarray*}\label{App_7}
\begin{aligned}
P_i^j(t)&=\frac{1}{2}+\frac{1}{\sqrt{\pi}}\int_0^{\alpha_j-\beta \log d_{ij}(t)}e^{-x^2}dx\\
&=\frac{1}{\sqrt{\pi}}\int_{-\infty}^{\alpha_j-\beta \log d_{ij}(t)}e^{-x^2}dx\\
&>\frac{1}{\sqrt{\pi}}\exp \left(-(\beta \log d_{ij}(t)-\alpha_j+1)^2\right).
\end{aligned}
 \end{eqnarray*}
From this expression, there exist two positive constants $c_1=c_1(n)$ and $c_2=c_2(n)$ such that
  \begin{eqnarray}\label{App_8}
\begin{aligned}
&P(\mathcal{G}^l \mbox{ is connected})\\
&>c_1 \exp \left(-c_2 (\beta \log d_{\max}^l-\alpha_{\min}+1)^2\right),
\end{aligned}
 \end{eqnarray}
 where $d_{\max}^l:=\max_{1\leq i,j\leq n,lT\leq t<(l+1)T}d_{ij}(t)$ and $\alpha_{\min}:=\min_{1\leq j\leq n} \alpha_j$.
 If there exist two constants $u\in(0,1/2)$ and $U>0$ such that
 \begin{eqnarray}\label{App_9}
\begin{aligned}
d_{\max}^l\leq \exp \left(\frac{\sqrt{u\log l-\log\log l+U}}{\beta\sqrt{c_2}}+\frac{\alpha_{\min}-1}{\beta}  \right)
\end{aligned}
 \end{eqnarray}
 for any $l\geq 1$, then from (\ref{App_8}) we have
   \begin{eqnarray}\label{App_10}
\begin{aligned}
P(\mathcal{G}^l \mbox{ is connected})>c_1 e^{-U} l^{-u} \log l, ~~\forall l\geq 1.
\end{aligned}
 \end{eqnarray}
We assume that the topologies $\{\mathcal{G}^l\}$ are mutually independent. Consequently, (\ref{App_10})
implies that the topologies satisfy (A1'). Recall that $\mathcal{G}^l$ is undirected and
 $\{\xi_j^l+\zeta_i^{j,l}\}$ is assumed to be a zero-mean and bounded variance $\tilde{\rho}$-mixing sequence.
By Theorem \ref{rand_topo},  system (\ref{App_4}) reaches  unbiased mean square average-consensus if we choose a suitable $a(l)$.
Also, by Remark \ref{rem_A2_a} we can control the final consensus value to be arbitrarily close to the average value of the initial state.

Inequality (\ref{App_9}) claims that to guarantee convergence to consensus the distance between agents cannot grow too fast. In fact, (\ref{App_9}) can be satisfied if  the velocity difference $\|V_i(t)-V_j(t)\|=O(\frac{1}{t})$ for any $1\leq i,j\leq n$, where $V_i(t)$ denotes the velocity of agent $i$ at time $t$.
Of course, the consensus speed depends not only on the growth rate of the distance between agents but also the initial distance.
On the other hand, if there exists a constant $b\in[0,1)$ such that
$\|V_i(t)-V_j(t)\|\geq \Theta(\frac{1}{t^{b}})$, then inequality (\ref{App_9}) may not be satisfied.

\subsection{Simulations}
We now perform simulations to evaluate distributed average consensus of mobile ad-hoc networks. Assume that nine agents are moving in the plane with velocities
$V(t)+v_i(t)$, $1\leq i\leq 9, t\geq 0$, where $V(t)$ is the velocity of the whole group at time $t$, and $v_i(t)$ is the relative velocity of agent $i$.
 Suppose that the initial position of agent $i$ is $(\cos\frac{(i-1)\pi}{8},\sin\frac{(i-1)\pi}{8})$, and the heading of the relative velocity $v_i(t)$ of agent $i$
is the constant $\frac{(i-1)\pi}{8}$.
The initial positions and headings of the relative velocities of all agents are shown in Fig. \ref{shownode}.

\myfigure{\includegraphics[width=.8\columnwidth,height=.4\columnwidth]{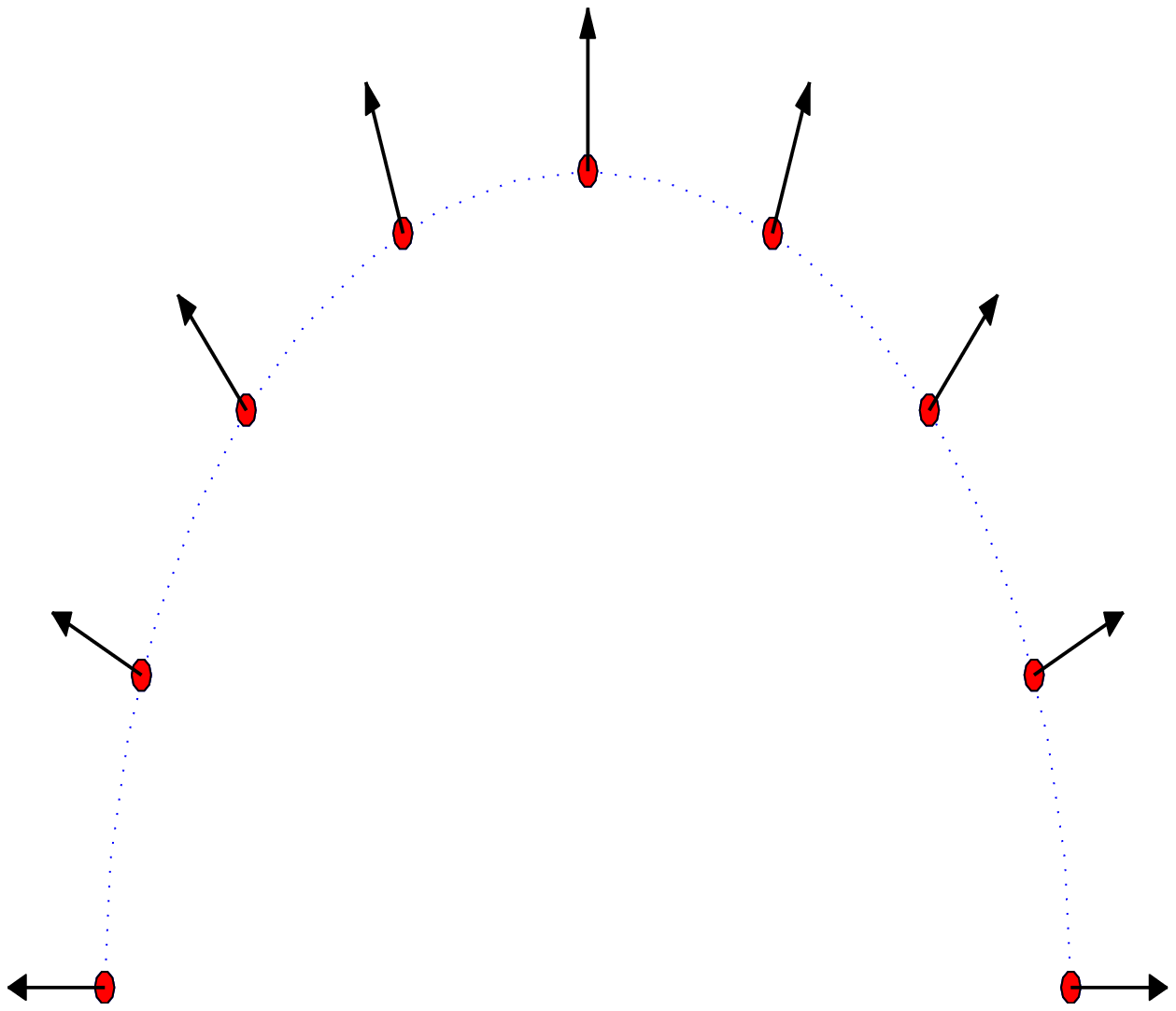}%
\figcaption{\emph{The initial positions and relative movement directions of all agents.}}\label{shownode}}

Each agent $i$ contains a state $x_i(t)\in\mathbb{R}$ which is initially set to be $\frac{i-1}{8}$.

We adopt the consensus protocol in Subsection \ref{subsec_app}, which means that the states of all agents are updated by (\ref{App_4}).
The period length $T$ is selected to be $1$. Let $\delta=1, \beta=10\sqrt{2}$, and $\alpha_j=4$ for $1\leq j\leq 9$,  where $\delta, \beta$, and $\alpha_j$ are the same constants appearing in Subsection \ref{subsec_app}. Assume that the
quantization noise $\{\xi_j^l\}$ is independent and uniformly distributed in $[-\frac{1}{16},\frac{1}{16}]$, and the reception error $\{\zeta_i^{j,l}\}$
obeys a Gaussian distribution whose expectation is zero and standard deviation is $0.05$.

We first simulate the case where the relative velocity magnitude $\|v_i(t)\|$ of agent $i$ equals $\frac{1}{t+200}$ for all $1\leq i\leq 9$ and
$t\geq 0$. In this case (\ref{App_9}) holds for any $\mu>0$ if we choose a suitable $U$.  In consideration of the consensus speed
and its variance, we select $a(t)=\frac{1}{(t+30)^{0.99}}$ according to Theorem \ref{rand_topo} and Remark \ref{rem_A2_a}.
Fig. \ref{FigS1} shows a simulation result in which the states of all agents converge to a consensus value close to $0.5=\frac{1}{9}\sum_{i=1}^9 x_i(0)$.

\myfigure{\includegraphics[width=.8\columnwidth]{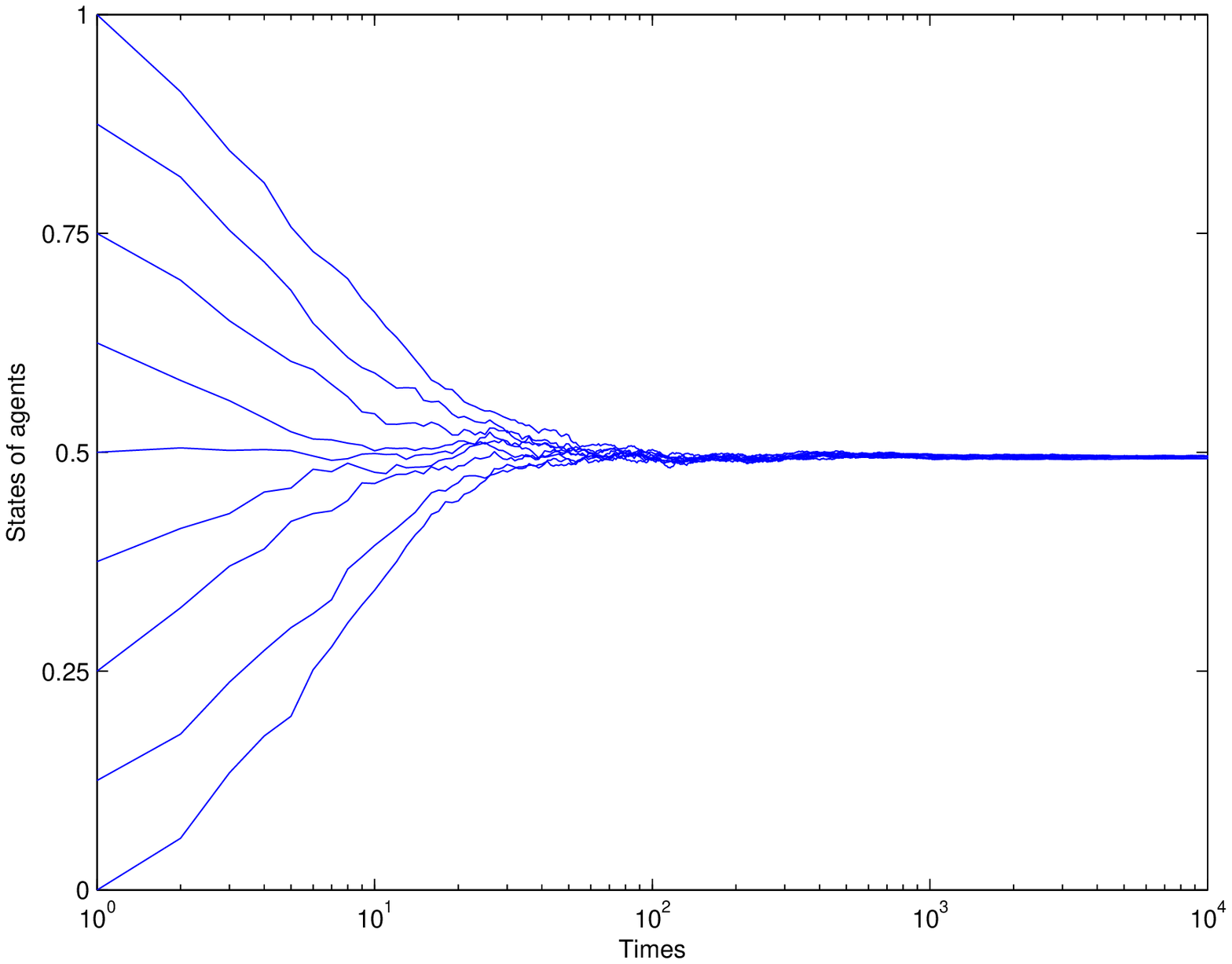}%
\figcaption{\emph{The x-axis denotes the time $t$, while y-axis denotes the state $x_i(t)$ of each agent $i$. For all $t\geq 0$,
the control gain $a(t)$ is set to be $\frac{1}{(t+30)^{0.99}}$, and
the  magnitude of the relative velocity of every agent equals to $\frac{1}{t+200}$, i.e., $\|v_i(t)\|=\frac{1}{t+200}$, $1\leq i\leq 9$.}}\label{FigS1}}


An interesting problem is: does protocol (\ref{App_4}) still achieve consensus if  (\ref{App_9}) is not satisfied?  We increase
 the relative velocity magnitude to $\frac{1}{(t+200)^{0.9}}$ which violates (\ref{App_9}) for any  $\mu>0$.
For comparison,
all other simulations are kept same.
The simulation result is shown in Fig. \ref{FigS2}.

\myfigure{\includegraphics[width=.8\columnwidth]{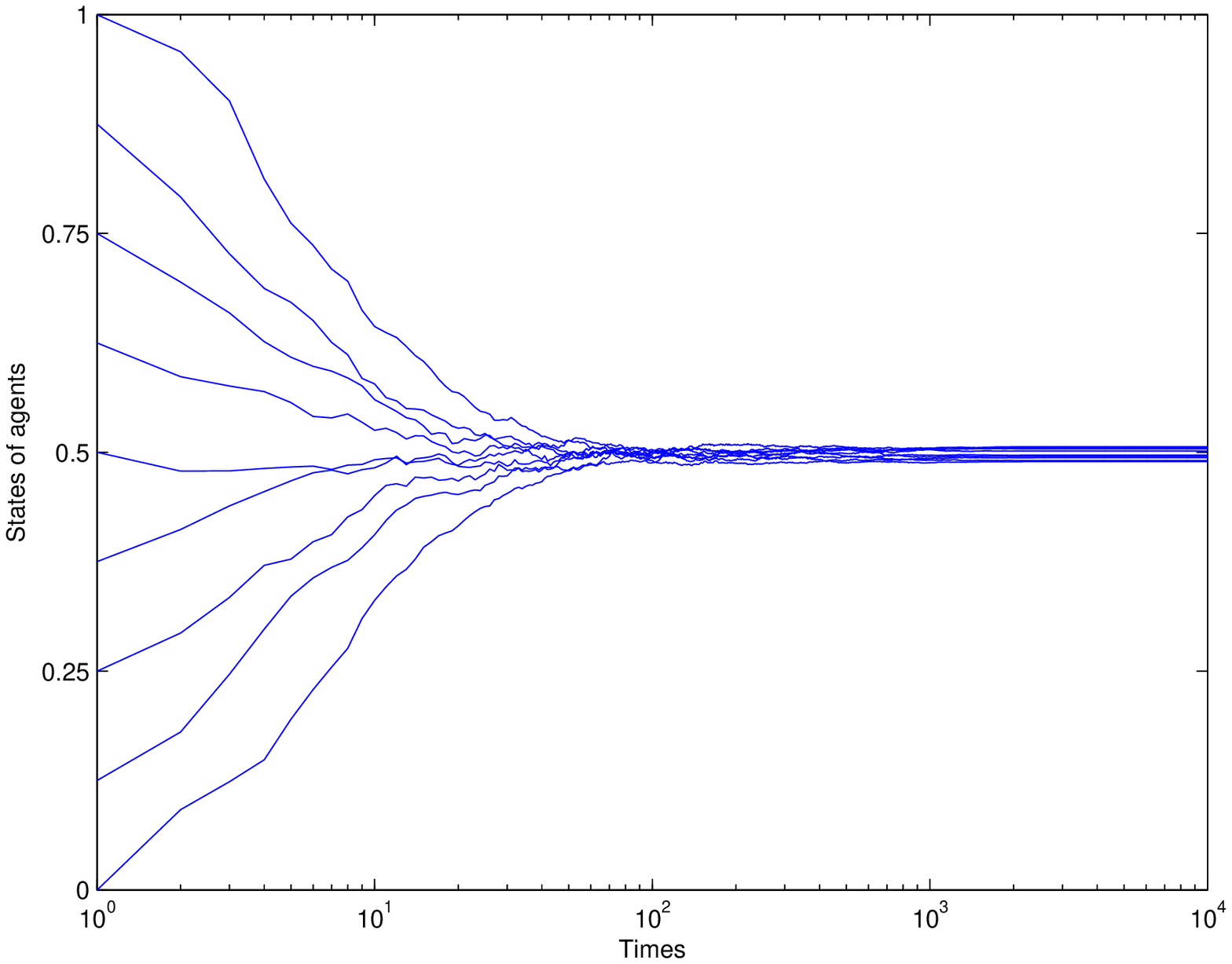}%
\figcaption{\emph{The magnitude of the relative velocity of every agent equals to $\frac{1}{(t+200)^{0.9}}$, while the other configuration is as same as Fig. \ref{FigS1}.}}\label{FigS2}}

From this simulation it can be seen that the final states of the agents have a gap. If
 the relative velocity magnitude grows to $\frac{1}{(t+200)^{0.8}}$, the gap between the agents' final states become more significant,
 see Fig. \ref{FigS3}.

 \myfigure{\includegraphics[width=.8\columnwidth]{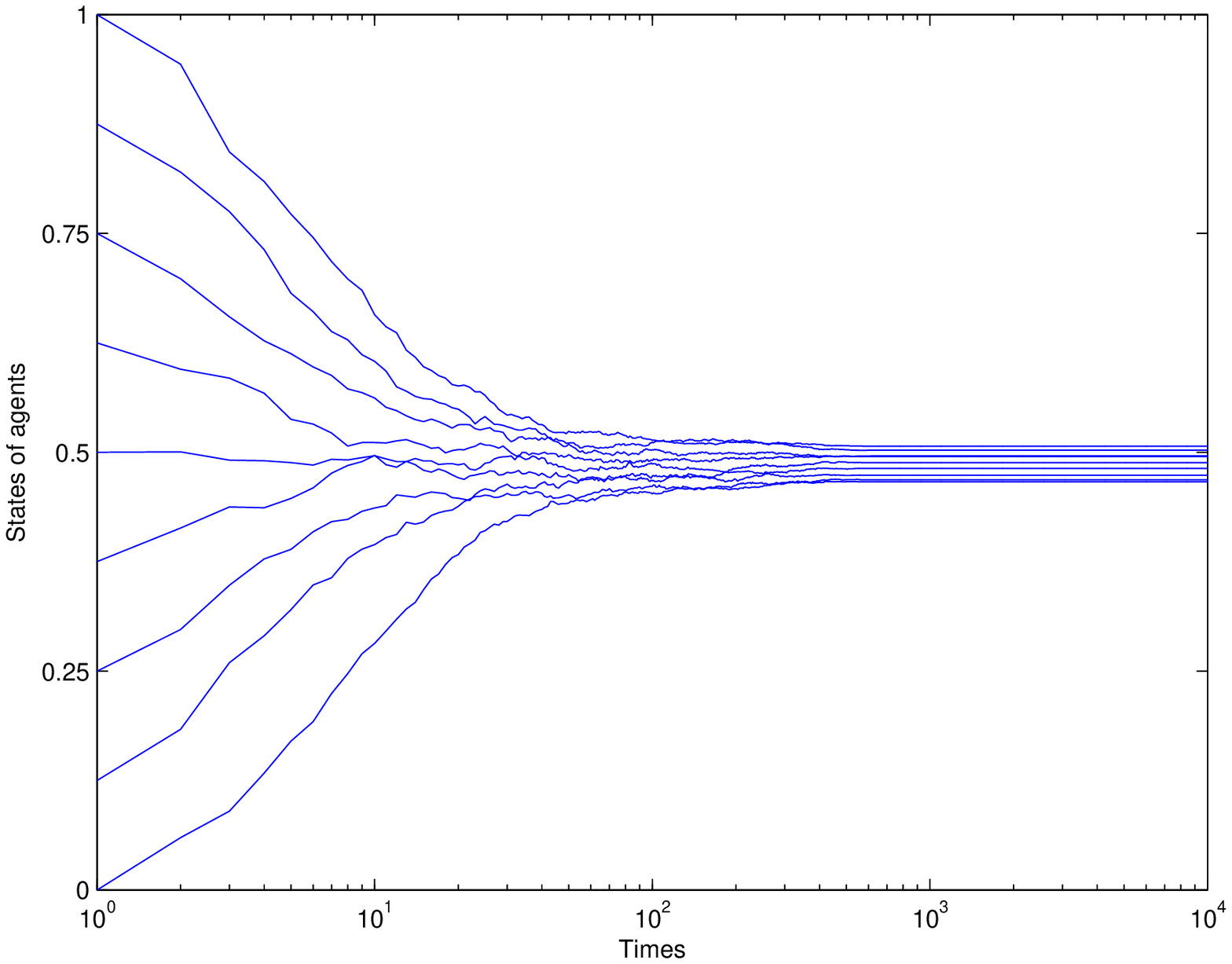}%
\figcaption{\emph{The magnitude of the relative velocity of every agent equals  $\frac{1}{(t+200)^{0.8}}$, while all other conditions are kept same as Fig. \ref{FigS1}.}}\label{FigS3}}

From above simulations it is conjectured that
if the magnitude of the relative velocity of every agent is  $\frac{1}{(t+200)^{b}}$, the critical value of $b$ equals  $1$ for consensus.

\section{Conclusions}\label{Conclusions}
Consensus behavior of multi-agent systems has drawn substantial interests over the past two
decades. However,
some key problems remain unsolved, including the fastest convergence
speeds and critical consensus conditions of connectivity on network topologies. This paper
addresses these problems based on a first-order average-consensus protocol with switching
topologies and additive noises. We first propose an extensible joint-connectivity condition
on topologies.
Using stochastic approximation methods and under our new
condition, we establish a critical consensus condition for network topologies, and provide the fastest convergence rates with respect to the best and worst topologies. Our results give a quantitative description of  the relation between convergence speed
and connectivity of network topologies. Also, we give a consensus analysis for our systems with non-stationary and strongly correlated stochastic topologies,
and apply it to distributed consensus of mobile ad-hoc networks.

\appendices
\section{}\label{App_lemmas}

\begin{proof}[Proof of Lemma \ref{Lemma_3}]
Take $A_t=I-a(t) L(t)$. Using (A2) and the condition of $a(t)\in (0,1/d_{\max})$,
 we have that
$A_t$ is a doubly stochastic matrix.
Also, we can compute
 \begin{eqnarray*}
(A_t' A_t)_{ij}\geq (1-a(t)d_{\max} ) \left[(A_t)_{ij}+(A_t)_{ji}\right]
\end{eqnarray*}
for all $t\geq 1$, and from (A1) and (\ref{a_assum}) we have
\begin{eqnarray*}
&&\min_{\emptyset\subset S\subset \{1,\ldots,r\}} \sum_{i\in S, j\in S^c}  \sum_{t=t_{l}}^{t_{l+1}-1} \left[(A_t)_{ij}+(A_t)_{ji}\right]\\
&& \geq  \min_{t_{l}\leq t <t_{l+1} }a(t)=\delta_l
\end{eqnarray*}
 With these it is deduced directly
from the proof of Theorem 6 in \cite{Touri2011} that
\begin{eqnarray}\label{Lemma_3_1}
&&V\left(z(t_{\widetilde{k}^t}-1)\right)\nonumber\\
&&=V\left(\Phi(t_{\widetilde{k}^t}-1,t_{\widetilde{k}^t-1})\cdots \Phi(t_{k^i+1}-1,t_{k^i})z(t_{k^i}-1)\right)\nonumber\\
&&\leq V\left(z(t_{k^i}-1)\right) \prod_{l=k^i}^{\widetilde{k}^t-1} \left(1-\frac{\delta_l(1-\delta_l)^2 \varepsilon_l }{n(n-1)^2} \right)
\end{eqnarray}
with  $\varepsilon_l=\min_{t_{l}\leq t <t_{l+1} }(1-a(t) d_{\max})$.
By Theorem 5 in \cite{Touri2011}, we get $V(z(t))\leq V (z(t_{\widetilde{k}^t}-1))$ and $V(z(t_{k^i}-1))\leq V(z(i))$. Combining these and
(\ref{Lemma_3_1}) yields our result.
\end{proof}

\begin{proof}[Proof of Lemma \ref{Lemma_rate1}]
First, we use induction to prove that for all integer $j\geq -1$,
\begin{eqnarray}\label{Lemma_r1_2}
t_{k^i+j}\leq \left(c(j+1)+i^{1-\delta} \right)^{1/(1-\delta)}.
\end{eqnarray}
 By the definition of $k^i$ we have $t_{k^i-1}\leq i$, so
(\ref{Lemma_r1_2}) holds for $j=-1$.  Also, if (\ref{Lemma_r1_2}) holds for $j\geq -1$, then
\begin{eqnarray*}\label{Lemma_r1_3}
&&t_{k^i+j+1}\leq t_{k^i+j}+c t_{k^i+j}^{\delta}\\
&&\leq \Big(c(j+1)+i^{1-\delta} \Big)^{\frac{1}{1-\delta}}+c\Big(c(j+1)+i^{1-\delta} \Big)^{\frac{\delta}{1-\delta}}\\
&&=c^{\frac{1}{1-\delta}}\Big(j+1+\frac{i^{1-\delta}}{c} \Big)^{\frac{1}{1-\delta}}\Big[1+\left(j+1+\frac{i^{1-\delta}}{c}\right)^{-1}  \Big]\\
&&\leq c^{\frac{1}{1-\delta}}\big(j+1+\frac{i^{1-\delta}}{c} \big)^{\frac{1}{1-\delta}}\big[1+\big(j+1+\frac{i^{1-\delta}}{c}\big)^{-1}  \big]^{\frac{1}{1-\delta}}\\
&&=\left(c(j+2)+i^{1-\delta} \right)^{\frac{1}{1-\delta}}.
\end{eqnarray*}
Hence, (\ref{Lemma_r1_2}) also holds for $j+1$. By induction we have (\ref{Lemma_r1_2}) holds for all $j\geq -1$.

Thus, using (\ref{Lemma_r1_2}) we get
\begin{eqnarray*}\label{Lemma_r1_4}
t_{k^i+\lfloor \frac{(t+1)^{1-\delta}-i^{1-\delta}}{c}\rfloor-1}\leq \left((t+1)^{1-\delta} \right)^{1/(1-\delta)}=t+1.
\end{eqnarray*}
From the definition of $\widetilde{t}$,
\begin{eqnarray*}\label{Lemma_r1_5}
k^i+\lfloor \frac{(t+1)^{1-\delta}-i^{1-\delta}}{c}\rfloor -1\leq \widetilde{k}^t.
\end{eqnarray*}
This, together with (\ref{Lemma_r1_2}) and the fact that $\log(1-x)< -x/2$ for $x\in (0,1)$, implies
\begin{eqnarray}\label{Lemma_r1_6}
&&\prod_{j=k^i}^{\widetilde{k}^t-1} \big( 1-\frac{c_1}{ t_{j+1}^{1-\delta}+t^*}\big)\\
&&\leq \prod_{j=0}^{\lfloor \frac{(t+1)^{1-\delta}-i^{1-\delta}}{c}-2\rfloor} \big( 1-\frac{c_1}{t_{k^i+j+1}^{1-\delta}+t^*} \big)\nonumber\\
&&\leq \prod_{j=0}^{\lfloor \frac{(t+1)^{1-\delta}-i^{1-\delta}}{c}-2\rfloor} \big( 1-\frac{c_1}{c(j+2)+i^{1-\delta}+t^*}\big)\nonumber\\
&&< \exp \Big(\sum_{j=0}^{\lfloor \frac{(t+1)^{1-\delta}-i^{1-\delta}}{c}-2\rfloor} \frac{-c_1}{2\left[c(j+2)+i^{1-\delta}+t^* \right]}\Big).\nonumber
\end{eqnarray}
Because for any  $a>0$ and integer $b>0$,
\begin{eqnarray*}
\sum_{k=0}^b \frac{1}{k+a}>\int_0^{b+1}\frac{dx}{x+a}=\log (b+1+a)-\log a,
\end{eqnarray*}
(\ref{Lemma_r1_6}) is followed by
\begin{eqnarray}\label{Lemma_r1_7}
&&\prod_{j=k^i}^{\widetilde{k}^t-1} \left( 1-\frac{c_1}{t_{j+1}^{1-\delta}+t^*} \right)<\exp\Big\{\frac{-c_1}{2c}\nonumber\\
&&~\cdot \Big[\log \frac{(t+1)^{1-\delta}+t^*}{c}-\log \Big(2+\frac{i^{1-\delta}+t^*}{c}\Big) \Big]  \Big\}\nonumber\\
&&=\left(\frac{i^{1-\delta}+2c+t^*}{(t+1)^{1-\delta}+t^*}\right)^{\frac{c_1}{2c}}.
\end{eqnarray}

With the similar process from (\ref{Lemma_r1_6}) to (\ref{Lemma_r1_7}) we get (\ref{Lemma_r1_01}).
\end{proof}

\begin{proof}[Proof of Lemma \ref{Lemma_lower}]
let $y:=(I-aL)x$, $\widetilde{y}:=y-y_{\rm{ave}}\mathds{1}$, $\widetilde{x}=x-x_{\rm{ave}}\mathds{1}$ and $\pi=\frac{1}{n}\mathds{1}'$. Then
\begin{eqnarray*}\label{Lemma_lower1}
\begin{aligned}
\widetilde{y}&=y-(\pi y)\mathds{1}=(I-aL)x-\left[\pi(I-aL)x\right]\mathds{1}\\
&=x-(\pi x)\mathds{1}-
a\left[Lx-(\pi Lx)\mathds{1}  \right]\\
&=\widetilde{x}-a\left[L\widetilde{x}-(\pi L\widetilde{x})\mathds{1}  \right],
\end{aligned}
\end{eqnarray*}
Combining this with $\widetilde{x}'\mathds{1}=0$ we get
\begin{eqnarray*}\label{Lemma_lower2}
\begin{aligned}
&V(y)=\|\widetilde{y}\|^2=\widetilde{y}' \widetilde{y}\\
 &\geq\|\widetilde{x}\|^2-a\widetilde{x}' \left[L\widetilde{x}-(\pi L\widetilde{x})\mathds{1}  \right]-a\left[L\widetilde{x}-(\pi L\widetilde{x})\mathds{1}  \right]' \widetilde{x}\\
&=\|\widetilde{x}\|^2-a \widetilde{x}' (L+L') \widetilde{x}\\
&\geq \|\widetilde{x}\|^2- a \lambda_{\max}(L+L') \|\widetilde{x}\|^2\\
&=\left(1-a\lambda_{\max}(L+L')\right) V(x).
\end{aligned}
\end{eqnarray*}
\end{proof}

\section{proof of Theorem \ref{nece1}}\label{App_nece1}
 We will show  system
(\ref{model0})-(\ref{model1}) cannot reach consensus in mean square by contradiction. Because to reach consensus $a(t)$ must converge to $0$,
there exists an integer $k_1$ such that $a(t)<\frac{1}{2n}$ for all $t\geq t_{k_1}$. We take $t^*=t_{k_1}$.
Similar to (\ref{the_1_2}) we get
\begin{eqnarray}\label{nece1_1}
x(t+1)=\Phi(t,t^*)x(t^*)+\sum_{i=t^*}^{t}a(i)\Phi(t,i+1)\widehat{w}(i).
\end{eqnarray}

We choose $t_k=t_{k-1}+c \lfloor t_{k-1}^{\delta}\rfloor$ and select
\begin{eqnarray*}\label{nece1_4}
\mathcal{G}(t):=
\left\{%
\begin{array}{ll}
\mathcal{G}_1,~~~~\mbox{if } t\in \cup_{k=1}^{\infty} \{t_k^*\},\\
\mathcal{G}_2,~~~~\mbox{otherwise},
\end{array}%
\right.
\end{eqnarray*}
where $t_k^*:=\arg\min_{t_k\leq t<t_{k+1}}a(t)$.
Here if there are more than one time reach $\min_{t_k\leq t<t_{k+1}}a(t)$ then we randomly pick one as $t_k^*$.
Let $\pi=\frac{1}{n}\mathds{1}'$, then we can compute $\pi L(t)=\textbf{0}$. So our choice satisfies both (A1) and (A2).

Set $S:=\cup_{k=1}^{\infty} \{t_k^*\}$ and let
\begin{eqnarray}\label{nece1_5_b}
b_i^t:=\prod_{j\in S\cap [i,t]}(1-na(j)), c_i^t:=\prod_{j\in S^c \cap [i,t]}(1-2a(j)).
\end{eqnarray}
By Proposition \ref{prop_matrix},
$\Phi(t,i)=P\mbox{diag}\left(1,b_i^t c_i^t, b_i^t,\ldots,b_i^t \right) P'.$
Set
\begin{eqnarray}\label{nece1_6_a1}
\widetilde{\Phi}(t,i):=P\mbox{diag}\left(0,b_i^t c_i^t, b_i^t,\ldots,b_i^t \right) P'
\end{eqnarray}
and $x_{\rm{ave}}(t):=\frac{1}{n}\sum_{i=1}^n x_i(t)$ be the average value of $x(t)$,
by (\ref{nece1_1}) we get
\begin{eqnarray}\label{nece1_6_a2}
&&x(t+1)-x_{\rm{ave}}(t+1)\mathds{1}=x(t+1)-(\pi x(t+1))\mathds{1}\nonumber\\
&&=\Phi(t,t^*)x(t^*)-(\pi x(t^*))\mathds{1}+a(t)\left[\widehat{w}(t)-(\pi \widehat{w}(t))\mathds{1} \right]\nonumber\\
&&~~+\sum_{i=t^*}^{t-1}a(i)\left[\Phi(t,i+1)\widehat{w}(i)-\left(\pi \widehat{w}(i)\right)\mathds{1} \right]\nonumber\\
&&=\widetilde{\Phi}(t,t^*)x(t^*)+\sum_{i=t^*}^{t-1}a(i)\widetilde{\Phi}(t,i+1)\widehat{w}(i)\nonumber\\
&&~~+a(t)\left[\widehat{w}(t)-(\pi \widehat{w}(t))\mathds{1} \right].
\end{eqnarray}
Because the noises $\{w_{ji}(t)\}$ satisfy (A4),
\begin{eqnarray}\label{nece1_6_a3}
&&E\left[V(x(t+1))\right]=\sum_{i=t^*}^{t-1}a^2(i)E\|\widetilde{\Phi}(t,i+1)\widehat{w}(i)\|^2\nonumber\\
&&~~+E\|\widetilde{\Phi}(t,t^*)x(t^*)\|^2+a^2(t)E\|\widehat{w}(t)-(\pi \widehat{w}(t))\mathds{1}\|^2\nonumber\\
&&\geq \sum_{i=t^*}^{t-1}a^2(i)E\|\widetilde{\Phi}(t,i+1)\widehat{w}(i)\|^2.
\end{eqnarray}
Take $y=\widetilde{\Phi}(t,i+1)\widehat{w}(i)$. We can compute
\begin{eqnarray*}\label{nece1_7}
&&y_1=b_{i+1}^t\\
&&\cdot\left[\frac{\widehat{w}_1(i)-\widehat{w}_2(i) }{2} c_{i+1}^t+\frac{\widehat{w}_1(i)+\widehat{w}_2(i) }{2}-\pi \widehat{w}(i)\right],
\end{eqnarray*}
so by (A4) there exists a constant $c'>0$ such that
\begin{eqnarray}\label{nece1_8}
E\|y\|^2 \geq E y_1^2 =c' (b_{i+1}^t)^2.
\end{eqnarray}
Substituting this into (\ref{nece1_6_a3}),  we
get
\begin{eqnarray}\label{nece1_9}
\begin{aligned}
E\left[V(x(t+1))\right]\geq c'\sum_{i=t^*}^{t-1}a^2(i)(b_{i+1}^t)^2
\end{aligned}
\end{eqnarray}
To reach consensus  the last line of (\ref{nece1_9}) should converge to $0$. We
show that this
is impossible.

First, because the initial state $x(1)$ is not consistent, to reach consensus we must choose some positive $a(t)$. Considering the affection of  noises, we must select positive $a(t)$ infinite times to guarantee consensus in mean square. Thus, we can pick $t'\geq t^*$ such that
$a(t')>0$. If the last line of (\ref{nece1_9}) converge to $0$, we have
\begin{eqnarray}\label{nece1_10}
\prod_{k=k_1}^{\infty}(1-na(t_k^*))=\lim_{t\rightarrow\infty} b_{t^*}^t\leq\lim_{t\rightarrow\infty} b_{t'+1}^t =0.
\end{eqnarray}
Also, we recall that $t_{\widetilde{k}^t}\leq t+1$, so
\begin{eqnarray}\label{nece1_11}
\begin{aligned}
&\sum_{i=t^*}^{t}a^2(i)(b_{i+1}^{t+1})^2\geq \sum_{k=k_1}^{\widetilde{k}^t-1}\sum_{i=t_k}^{t_{k+1}-1} a^2(i)(b_{i+1}^{t+1})^2\\
&\geq \sum_{k=k_1}^{\widetilde{k}^t-1}\sum_{i=t_k}^{t_{k+1}-1} a^2(i)\prod_{j=k}^{\widetilde{k}^t}\left[1-na(t_j^*)\right]^2\\
&\geq \sum_{k=k_1}^{\widetilde{k}^t-1}(t_{k+1}-t_k) a^2(t_k^*)\prod_{j=k}^{\widetilde{k}^t}\left[1-na(t_j^*)\right]^2\\
&>\frac{1}{16}\sum_{k=k_1}^{\widetilde{k}^t-1} \lfloor c t_k^{\delta}\rfloor a^2(t_k^*)\prod_{j=k+1}^{\widetilde{k}^t-1}\left[1-2na(t_j^*)\right].
\end{aligned}
\end{eqnarray}
Let $I_{\{\cdot\}}$ be the indicator function, then
\begin{eqnarray}\label{nece1_12}
\begin{aligned}
1-2na(t_j^*)&=\left[1-2na(t_j^*) I_{\{a(t_j^*)>t_j^{-\delta}\}}\right]\\
&~\cdot\left[1-2na(t_j^*) I_{\{a(t_j^*)\leq t_j^{-\delta}\}}\right].
\end{aligned}
\end{eqnarray}
Because $c\geq 1$ and $\delta>1/2$, by the choice of $t_k$, we have
\begin{eqnarray*}
\begin{aligned}
t_k&=t_{k-1}+c \lfloor t_{k-1}^{\delta} \rfloor\\
&\geq t_{k-1}+\frac{c}{2} t_{k-1}^{\delta}> t_{k-1}+\frac{\sqrt{t_{k-1}}}{2},
\end{aligned}
 \end{eqnarray*}
 then by induction we get $t_k>\frac{1}{20}k^2$. So,
\begin{eqnarray}\label{nece1_13}
\begin{aligned}
&\sum_{j=k_1}^{\infty} a(t_j^*) I_{\{a(t_j^*)\leq t_j^{-\delta}\}}\leq \sum_{j=k_1}^{\infty} t_j^{-\delta}\\
&~<\sum_{j=k_1}^{\infty} 20^{\delta} j^{-2\delta}<\infty,
\end{aligned}
\end{eqnarray}
which indicates
\begin{eqnarray*}\label{nece1_14}
c_1:=\prod_{j=k_1}^{\infty}\left[1-2na(t_j^*) I_{\{a(t_j^*)\leq t_j^{-\delta}\}}\right]>0.
\end{eqnarray*}
Substituting this and (\ref{nece1_12}) into (\ref{nece1_11}) and taking $$d_k^t=\prod_{j=k+1}^{\widetilde{k}^t-1}[1-2na(t_j^*) I_{\{a(t_j^*)>t_j^{-\delta}\}}]$$ we have
\begin{eqnarray}\label{nece1_15}
\begin{aligned}
&\sum_{i=t^*}^{t}a^2(i)(b_{i+1}^{t+1})^2\geq \frac{1}{16}\sum_{k=k_1}^{\widetilde{k}^t-1} \lfloor c t_k^{\delta}\rfloor a^2(t_k^*)d_k^t\\
&~~\cdot \prod_{j=k+1}^{\widetilde{k}^t-1}\left[1-2na(t_j^*) I_{\{a(t_j^*)\leq t_j^{-\delta}\}}\right]\\
&\geq \frac{c_1}{16}\sum_{k=k_1}^{\widetilde{k}^t-1} \lfloor c t_k^{\delta}\rfloor a^2(t_k^*)d_k^t\\
&\geq \frac{c_1}{16}\sum_{k=k_1}^{\widetilde{k}^t-1} \lfloor c t_k^{\delta}\rfloor a^2(t_k^*)I_{\{a(t_k^*)>t_k^{-\delta}\}}d_k^t\\
&> \frac{c_1}{16}\sum_{k=k_1}^{\widetilde{k}^t-1} \lfloor c t_k^{\delta}\rfloor t_k^{-\delta} a(t_k^*)I_{\{a(t_k^*)>t_k^{-\delta}\}}d_k^t\\
&\geq \frac{c_1c}{32}\sum_{k=k_1}^{\widetilde{k}^t-1} a(t_k^*)I_{\{a(t_k^*)>t_k^{-\delta}\}}d_k^t\\
&=\frac{c_1c}{64n}\left(1-\prod_{j=k_1}^{\widetilde{k}^t-1} \left[1-2na(t_j^*) I_{\{a(t_j^*)>t_j^{-\delta}\}}\right] \right).
\end{aligned}
\end{eqnarray}
Also, by (\ref{nece1_10}) and (\ref{nece1_13}) we have
$\sum_{j=k_1}^{\infty}a(t_j^*) I_{\{a(t_j^*)>t_j^{-\delta}\}}=\infty,$
so by (\ref{nece1_15}) we get
\begin{eqnarray*}\label{nece1_16}
\begin{aligned}
\liminf_{t\rightarrow\infty}\sum_{i=t^*}^{t}a^2(i)(b_{i+1}^{t+1})^2\geq \frac{c_1c}{64n}.
\end{aligned}
\end{eqnarray*}
Combining this with (\ref{nece1_9}) we see the system cannot reach consensus in mean square.

\section{proof of Theorem \ref{crit2}}\label{App_crit2}

Without loss of generality, we assume
\begin{eqnarray}\label{crit2_2}
\sum_{i=3}^n[x_i(1)-x_{\rm{ave}}(1)]^2\geq \frac{n-2}{n}V(x(1))>0.
\end{eqnarray}
Choose $t_k$ and $\mathcal{G}(t)$ as same as the proof of Theorem \ref{nece1}, and take
$\pi=\frac{1}{n}\mathds{1}'$. Also, similar to (\ref{Lemma_r1_2}) we can prove that there exists a constant $c_1:=c_1(c,\delta)>0$ such that
\begin{eqnarray}\label{crit2_3}
c_1 k^{1/(1-\delta)}\leq t_k \leq (ck)^{1/(1-\delta)},~~~~\forall k\geq 1.
\end{eqnarray}

For the time $t+2$ with $t\geq 0$, we consider all the choices of $\{a(i)\}_{i=1}^{t+1}$  to get a lower bound of $E\|x(t+2)-x_{\rm{ave}}(t+2)\mathds{1}\|^2$. Let $t^*\in [1,t+2]$ be the minimum time such that if $k\geq t^*$ then $a(k)<\frac{1}{3n}$. We see if $t^*\geq 2$ then $a(t^*-1)\geq \frac{1}{3n}$.
By (\ref{nece1_6_a3}) we have
\begin{eqnarray}\label{crit2_4}
\begin{aligned}
&E\left[V(x(t+2))\right]=E\|\widetilde{\Phi}(t+1,t^*)x(t^*)\|^2\\
&~~+\sum_{i=t^*}^{t}a^2(i)E\|\widetilde{\Phi}(t+1,i+1)\widehat{w}(i)\|^2\\
&~~ +a^2(t+1)E\|\widehat{w}(t+1)-(\pi \widehat{w}(t+1))\mathds{1}\|^2.
\end{aligned}
\end{eqnarray}
If $t^*=t+2$ which means $t^*-1=t+1$,  by (\ref{crit2_4}) and (\ref{cap_1})
$E\left[V(x(t+2))\right]\geq a^2(t^*-1) V(\widehat{w}(t^*-1)) \geq \frac{c_1}{9n^2}$,
which is followed by our result directly.

It remains to consider the case of $t^*< t+2$.
Recall that $b_i^t:=\prod_{j\in S\cap [i,t]}(1-na(j))$ defined in (\ref{nece1_5_b}). For any $y=\widetilde{\Phi}(t,i)x$ we can compute
$y_j=\left(x_j-\pi x\right)b_i^t$ for any $3\leq j\leq n$,
so if $t^*=1$ then
\begin{eqnarray}\label{crit2_6}
&&E\|\widetilde{\Phi}(t+1,t^*)x(t^*)\|^2 \geq \left(b_{t^*}^{t+1}\right)^2\sum_{j=3}^n[x_j(1)-x_{\rm{ave}}(1)]^2\nonumber\\
&&~\geq  (n-2)\left(b_{t^*}^{t+1}\right)^2V(x(1))/n,
\end{eqnarray}
where the last inequality uses (\ref{crit2_2}). Otherwise, by the choice of $t^*$ we have $a(t^*-1)\geq \frac{1}{3n}$, so
\begin{eqnarray}\label{crit2_7}
\begin{aligned}
&E\|\widetilde{\Phi}(t+1,t^*)x(t^*)\|^2\geq a^2(t^*-1)\\
&\cdot E\|\widetilde{\Phi}(t+1,t^*)\widehat{w}(t^*-1)\|^2\geq \frac{c'}{9n^2}(b_{t^*}^{t+1})^2,
\end{aligned}
\end{eqnarray}
where the last inequality uses (\ref{nece1_8}). Set $k_1:=k^{t^*-1}$, then we have $t_{k_1}\geq t^*$ but $t_{k_1-1}<t^*$.
With the similar process from (\ref{nece1_8}) to (\ref{nece1_11}), there exists a constant $c_2>0$ such that
\begin{eqnarray}\label{crit2_8}
&&\sum_{i=t^*}^{t}a^2(i)E\|\widetilde{\Phi}(t+1,i+1)\widehat{w}(i)\|^2 \geq c' \sum_{i=t^*}^{t}a^2(i)(b_{i+1}^{t+1})^2\nonumber\\
&&\geq c_2 \sum_{k=k_1}^{\widetilde{k}^t-1} \lfloor c t_k^{\delta}\rfloor a^2(t_k^*)\prod_{j=k+1}^{\widetilde{k}^t-1}\left[1-2na(t_j^*)\right].
\end{eqnarray}
Similar to (\ref{nece1_12}) we have
\begin{eqnarray*}\label{crit2_9}
\begin{aligned}
1-2na(t_j^*)&=\left[1-2na(t_j^*) I_{\{a(t_j^*)>t^{2\delta-1}/\lfloor c t_j^{\delta}\rfloor \}}\right] \\
&~~\cdot\left[1-2na(t_j^*) I_{\{a(t_j^*)\leq t^{2\delta-1}/\lfloor c t_j^{\delta}\rfloor \}}\right].
\end{aligned}
\end{eqnarray*}
According to (\ref{crit2_3}) and the fact
$t_{\widetilde{k}^t}\leq t+1$,  we
get
\begin{eqnarray}\label{crit2_10}
\begin{aligned}
&\sum_{j=1}^{\widetilde{k}^t-1}a(t_j^*) I_{\{a(t_j^*)\leq t^{2\delta-1}/\lfloor c t_j^{\delta}\rfloor \}}\\
& \leq \sum_{j=1}^{\widetilde{k}^t-1}\frac{t^{2\delta-1}}{\lfloor c t_j^{\delta}\rfloor }=t^{2\delta-1} \sum_{j=1}^{\widetilde{k}^t-1}O\left(j^{\frac{-\delta}{1-\delta}} \right)\\
&=t^{2\delta-1} O\left((\widetilde{k}^t)^{\frac{1-2\delta}{1-\delta}} \right)=O\left(t_{\widetilde{k}^t}^{2\delta-1} (\widetilde{k}^t)^{\frac{1-2\delta}{1-\delta}} \right)\\
& =O\left((\widetilde{k}^t)^{\frac{2\delta-1}{1-\delta}} (\widetilde{k}^t)^{\frac{1-2\delta}{1-\delta}} \right)=O(1),
\end{aligned}
\end{eqnarray}
so with the similar process from (\ref{nece1_13}) to (\ref{nece1_15}),  there exists a constant $c_3>0$ such that
\begin{eqnarray}\label{crit2_12}
&&\sum_{k=k_1}^{\widetilde{k}^t-1} \lfloor c t_k^{\delta}\rfloor a^2(t_k^*)\prod_{j=k+1}^{\widetilde{k}^t-1}\left[1-2na(t_j^*)\right]\\
&&\geq \frac{c_3}{t^{1-2\delta}} \sum_{k=k_1}^{\widetilde{k}^t-1} a(t_k^*)I_{\{a(t_j^*)>t^{2\delta-1}/\lfloor c t_j^{\delta}\rfloor \}}\nonumber\\
&&~~\cdot\prod_{j=k+1}^{\widetilde{k}^t-1}\left[1-2na(t_j^*) I_{\{a(t_j^*)>t^{2\delta-1}/\lfloor c t_j^{\delta}\rfloor \}}\right]\nonumber\\
&&=\frac{c_3}{2nt^{1-2\delta}}\Big(1- \prod_{j=k_1}^{\widetilde{k}^t-1}\left[1-2na(t_j^*) I_{\{a(t_j^*)>t^{2\delta-1}/\lfloor c t_j^{\delta}\rfloor \}}\right]\Big).\nonumber
\end{eqnarray}
If $$\prod_{j=k_1}^{\widetilde{k}^t-1}[1-2na(t_j^*) I_{\{a(t_j^*)>t^{2\delta-1}/\lfloor c t_j^{\delta}\rfloor \}}]\leq \frac{1}{2},$$ then together (\ref{crit2_12}), (\ref{crit2_8}) and (\ref{crit2_4}) our result is obtained. Otherwise, by the definition of $b_i^t$ and (\ref{crit2_10}) there exists a constant $c_4>0$ such that
\begin{eqnarray*}\label{crit2_13}
\begin{aligned}
b_{t^*}^{t+1} \geq c_4\prod_{j=k_1}^{\widetilde{k}^t-1}\left[1-na(t_j^*) I_{\{a(t_j^*)>t^{2\delta-1}/\lfloor c t_j^{\delta}\rfloor \}}\right]>\frac{c_4}{2}.
\end{aligned}
\end{eqnarray*}
Substituting
this into (\ref{crit2_6}) and (\ref{crit2_7}) we get $E\|\widetilde{\Phi}(t+1,t^*)x(t^*)\|^2$ is bigger than a positive constant, then by
(\ref{crit2_4}) the desired result follows.

\section{proof of Theorem \ref{rand_topo}}\label{App_rand_topo}

Set $t_k=1$ and $t_{k+1}=t_k+\lfloor c t_k^{\mu}\rfloor$, where $c$ is a large constant.
 Let $E_{t_1,t_2}$ be the event of $\bigcup_{k=t_1}^{t_2-1}\mathcal{G}(k)$ is strongly connected.  Then for any
 given topologies $\mathcal{G}(l)$, $1\leq l\leq t_k-1$,
\begin{eqnarray}\label{rand_topo_3}
&&P\left(E_{t_k,t_{k+1}}|\{\mathcal{G}(l)\}_{l=1}^{t_k-1}\right)\nonumber\\
&&\geq P\Big(\bigcup_{i=1}^{\lfloor\frac{t_{k+1}-t_k}{K}\rfloor}E_{t_k+(i-1)K,t_k+iK-1}|\{\mathcal{G}(l)\}_{l=1}^{t_k-1}\Big)\nonumber\\
&&=1- P\Big(\bigcap_{i=1}^{\lfloor\frac{t_{k+1}-t_k}{K}\rfloor}E_{t_k+(i-1)K,t_k+iK-1}^c|\{\mathcal{G}(l)\}_{l=1}^{t_k-1}\Big)\nonumber\\
&&=1- P\left(E_{t_k,t_k+K-1}^c|\{\mathcal{G}(l)\}_{l=1}^{t_k-1}\right)\nonumber\\
&&~~~\cdot\prod_{i=2}^{\lfloor\frac{t_{k+1}-t_k}{K}\rfloor}P\Big(E_{t_k+(i-1)K,t_k+iK-1}^c|\{\mathcal{G}(l)\}_{l=1}^{t_k-1},\nonumber\\
&&~~~~~~~~~~~~~~~~~~~~\bigcap_{j=1}^{i-1}\{ E_{t_k+(j-1)K,t_k+jK-1}^c\}\Big)\nonumber\\
&&>1-\left(1-p t_{k+1}^{-\mu}\log t_k\right)^{\lfloor\frac{t_{k+1}-t_k}{K}\rfloor},
\end{eqnarray}
where the last line uses the assumption (A1').

Let $k^*$ be the minimal time such that $\bigcap_{k=k^*}^{\infty}E_{t_k,t_{k+1}}$ happens, which implies the event
$E_{t_{k^*-1},t_{k^*}}$ does not happen. By (\ref{rand_topo_3}),
\begin{eqnarray}\label{rand_topo_3_1}
\begin{aligned}
&P\left(k^*=k\right)<P\left(E_{t_{k-1},t_{k}}^c \right)\\
&<\left(1-p t_{k}^{-\mu}\log t_{k-1}\right)^{\lfloor\frac{t_{k}-t_{k-1}}{K}\rfloor}<t_k^{-\frac{cp}{2K}}
\end{aligned}
\end{eqnarray}
for large $k$, where the last inequality uses the fact of $t_{k+1}-t_k=\lfloor c t_k^{\mu}\rfloor$ and $\mu<1/2$.
By the total probability theorem,
\begin{eqnarray}\label{rand_topo_4}
\begin{aligned}
&E[V(x(t))]=\sum_{k=1}^{\infty} P(k^*=k)E[V(x(t))|k^*=k].
\end{aligned}
\end{eqnarray}
We need to consider the value of $E[V(x(t))|k^*=k]$. Similar to (\ref{the_1_2_2}) and (\ref{the_1_7}) we get
\begin{eqnarray}\label{rand_topo_4a}
&&E[V(x(t+1))|k^*=k]= E[V\left(\Phi(t,1)x(1)\right)|k^*=k]\nonumber\\
&&+ O\bigg( \sum_{i=1}^{t}a^2(i)E\left[V\left(\Phi(t,i+1)\widehat{w}(i)\right)|k^*=k\right]\bigg),
\end{eqnarray}
and similar to (\ref{crit3_4}) for any $x\in\mathds{R}^n$ we obtain
\begin{eqnarray}\label{rand_topo_5}
&&E\left[V\left(\Phi(t,i+1)x\right)|k^*=k\right]\\
&&\leq E\left[V\left(x\right)\right] \prod_{j=\max\{k^{i},k\}}^{\widetilde{k}^t-1}  \left(1-\frac{4c}{t_{j+1}^{1-\mu}+t^*}\right)\nonumber\\
&&<E\left[V\left(x\right)\right]\left(\frac{2c+(\max\{i,t_{k}-1\})^{1-\mu}+t^*}{(t+1)^{1-\mu}+t^*}\right)^2,\nonumber
\end{eqnarray}
where the last line uses the fact if $k^x=y$ then $x\leq t_{y}-1$.
Since $E[V(\widehat{w}(i))|k=k^*]$ is bounded under (A3),
taking (\ref{rand_topo_5}) into (\ref{rand_topo_4a}) we can obtain
\begin{eqnarray*}\label{rand_topo_6}
\begin{aligned}
&E[V(x(t))|k^*=k]\\
&=O\bigg(\sum_{i=1}^{t_k-1} \frac{a^2(i) t_k^{2(1-\mu)}}{t^{2(1-\mu)}}+\sum_{i=t_k}^{t}\frac{a^2(i) i^{2(1-\mu)}}{t^{2(1-\mu)}}\bigg)\\
&=O\bigg( \frac{t_k^{2(1-\mu)}}{t^{2(1-\mu)}}+\frac{1}{t^{1-2\mu}}\bigg).
\end{aligned}
\end{eqnarray*}
Substitute this and  (\ref{rand_topo_3_1}) into (\ref{rand_topo_4}) we have $E[V(x(t))]=O(\frac{1}{t^{1-2\mu}})$ when $c$ is large enough.

Finally, let $x^*$ be the same value defined by (\ref{the_1_19_b1}). As same as (\ref{the_1_19}) and (\ref{the_1_20}) we can obtain
$E[x^*]=\pi x(1)$ and Var$(x^*)<\infty$.


\section*{Acknowledgment}
Ge Chen and Chen Chen would like to thank the guidance of Prof. Lei Guo from  Academy of Mathematics and Systems Science, Chinese Academy of Sciences.

\end{multicols}

\section*{Authors}

\begin{description}
    \item[Ge Chen] received the B.Sc. degree in mathematics from the University of Science and Technology of China in 2004, and the Ph.D. degree in mathematics from the University of Chinese Academy of Sciences, China, in 2009.

He jointed the  National Center for Mathematics and Interdisciplinary Sciences, Academy of Mathematics and Systems Science, Chinese Academy of Sciences in 2011, and is currently  an Associate Professor. His current research interest is the collective behavior of multi-agent systems.

Dr. Chen received the First Prize of the Application Award from the Operations Research Society of China (2010). One of his papers was selected as a SIGEST paper by the \emph{SIAM Review} (2014). He was also a finalist for the OR in Development prize from the International Federation of Operations Research Societies (2011), and for the best theoretical paper award at the 10th World Congress on Intelligent Control and Automation (2012).

    \item[Le Yi Wang]  received the Ph.D. degree in electrical engineering from McGill University, Montreal, Canada, in 1990.

   Since 1990, he has been with Wayne State University, Detroit, Michigan, where he is currently a Professor in the Department of Electrical and  Computer Engineering. His research interests are in the areas of complexity and information, system identification, robust control, H-infinity optimization, time-varying systems, adaptive systems, hybrid and nonlinear systems, information processing and learning, as well as medical, automotive, communications, power systems, and computer applications of control methodologies.

   He was a keynote speaker in several international conferences. He serves on the IFAC Technical Committee on Modeling, Identification and Signal Processing. He was an Associate Editor of the IEEE Transactions on Automatic Control and several other journals.  He was a Visiting Faculty at University of Michigan in 1996 and Visiting Faculty Fellow at University of Western Sydney in 2009 and 2013. He is a member of the Core International Expert Group at Academy of Mathematics and Systems Science, Chinese Academy of Sciences, and an International Expert Adviser at Beijing Jiao Tong University. He is a Fellow of IEEE.

    \item[Chen Chen] was born in Shannxi, China, in 1987. She received the B.S. degree in Mathematics from Beihang University in 2009, and the Ph.D. degree in Control Theory from the Academy of Mathematics and Systems Science, Chinese Academy of Sciences, in 2014. She is currently a researcher at Huawei Technologies Co. Ltd. Her research interests include complex systems, distributed filters, statistic machine learning. and deep learning.

   \item[George Yin]
   received the B.S. degree in Mathematics from
the University of Delaware in 1983, M.S. degree in Electrical
Engineering, and Ph.D. in Applied Mathematics from
Brown University in 1987.

He joined Wayne State University
in 1987, and became a professor in 1996. His research
interests include stochastic systems and applications.

He served as a member of the program committee for many IEEE
Conference on Control and Decision;
he also severed on the IFAC Technical Committee on Modeling,
Identification and Signal Processing, and many conference
program committees; he was Co-Chair of SIAM Conference
on Control \& Its Application, 2011, and Co-Chair of
two AMS-IMS-SIAM Summer Research Conferences;
he chaired a number SIAM prize selection committees. He was Chair of SIAM Activity
Group on Control and Systems Theory, and served on the Board of Directors of
American Automatic Control Council. He is an associate editor of SIAM Journal on
Control and Optimization, and on the editorial board of a number of other journals.
He was an Associate Editor of Automatica (2005-2011) and IEEE Transactions on
Automatic Control (1994-1998). He is a Fellow of IEEE, Fellow of IFAC, and  Fellow of SIAM.
\end{description}

\end{document}